\let\oldtocsection=\tocsection
\let\oldtocsubsection=\tocsubsection
\let\oldtocsubsubsection=\tocsubsubsection
\renewcommand{\tocsection}[2]{\vspace{0.5em}\hspace{0em}\oldtocsection{#1}{#2}}
\renewcommand{\tocsubsection}[2]{\vspace{0.5em}\hspace{1em}\oldtocsubsection{#1}{#2}}
\renewcommand{\tocsubsubsection}[2]{\vspace{0.5em}\hspace{2em}\oldtocsubsubsection{#1}{#2}}
   \DeclareMathOperator{\sign}{sign}
\patchcmd{\@settitle}{center}{flushleft}{}{}
\patchcmd{\@settitle}{center}{flushleft}{}{}
\patchcmd{\@setauthors}{\centering}{\raggedright}{}{}
\patchcmd{\abstract}{3pc}{0pt}{}{} % remove indentation
\renewcommand*\@maketitle{%
  \normalfont\normalsize
  \@adminfootnotes
  \@mkboth{\@nx\shortauthors}{\@nx\shorttitle}%
  \global\topskip42\p@\relax % 5.5pc   "   "   "     "     "
  \@settitle
  \ifx\@empty\authors \else \@setauthors \fi
  \ifx\@empty\@date \else {\vskip 1em \vtop{\centering\large\@date\@@par}}\fi% MY CHANGE
  \ifx\@empty\@dedicatory
  \else
    \baselineskip18\p@
    \vtop{\centering{\footnotesize\itshape\@dedicatory\@@par}%
      \global\dimen@i\prevdepth}\prevdepth\dimen@i
  \fi
  \@setabstract
  \normalsize
  \if@titlepage
    \newpage
  \else
    \dimen@34\p@ \advance\dimen@-\baselineskip
    \vskip\dimen@\relax
  \fi
} % end \@maketitle
\renewcommand*\@adminfootnotes{%
  \let\@makefnmark\relax  \let\@thefnmark\relax
%  \ifx\@empty\@date\else \@footnotetext{\@setdate}\fi% MY CHANGE
  \ifx\@empty\@subjclass\else \@footnotetext{\@setsubjclass}\fi
  \ifx\@empty\@keywords\else \@footnotetext{\@setkeywords}\fi
  \ifx\@empty\thankses\else \@footnotetext{%
    \def\par{\let\par\@par}\@setthanks}%
  \fi
}
\def\sf{\sin\left(\frac{c}{4}\right)}
\def\cf{\cos\left(\frac{c}{4}\right)}
\def\st{\sin\left(\frac{c}{2}\right)}
\def\ct{\cos\left(\frac{c}{2}\right)}
\def\shLt{\sinh\left(\frac{\lambda}{2}\right)}
\def\chLt{\cosh\left(\frac{\lambda}{2}\right)}
\def\shLtt{\sinh\left(\frac{3\lambda}{2}\right)}
\def\chLtt{\cosh\left(\frac{3\lambda}{2}\right)}
\def\nline{\\ \noalign{\medskip}}
\newcounter{dummy} 
\numberwithin{dummy}{section}
\newtheorem{thm}[dummy]{Theorem}
\newtheorem{cor}[dummy]{Corollary}
\newtheorem{defi}[dummy]{Definition}
\newtheorem{lem}[dummy]{Lemma}
\newtheorem{prop}[dummy]{Proposition}
\newtheorem{rk}[dummy]{Remark}
\numberwithin{equation}{section}
 \renewenvironment{proof}{{\bfseries \noindent Proof.}}{\demo}
\newcommand\xqed[1]{%
  \leavevmode\unskip\penalty9999 \hbox{}\nobreak\hfill
  \quad\hbox{#1}}
\newcommand\demo{\xqed{$\square$}}
\title[{Stabilization of the Timoshenko system with Kelvin-Voigt damping}]{A transmission problem for the  Timoshenko system with one local Kelvin-Voigt damping and non-smooth coefficient at the interface}        
\author{Mouhammad Ghader and Ali Wehbe  \vspace{0.58cm}\\
Lebanese University, Faculty of sciences 1, Khawarizmi Laboratory of Mathematics and Applications-KALMA, Hadath-Beirut. \\ \vspace{0.2cm}
 \\ \vspace{0.2cm}
Emails:  mhammadghader@hotmail.com and ali.wehbe@ul.edu.lb.
}
\begin{document}
\begin{abstract}
In this paper, we study the indirect stability of  Timoshenko system with  local or global  Kelvin–Voigt damping, under fully Dirichlet or mixed  boundary conditions.  Unlike \cite{Zhao--2004} and  \cite{Tian-Zhang-2017}, in this paper, we consider the Timoshenko system with only one locally or globally distributed Kelvin-Voigt damping $D$ (see System \eqref{E--(1.1)}). Indeed, we prove that the energy of the system decays  polynomially of type $t^{-1}$ and that this decay rate is in some sense optimal. The method is based on the frequency domain approach combining with multiplier method.\\[0.1in]
\noindent \textbf{MSC Classification.}  35B35;  35B40;   93D20.\\[0.1in]
\noindent \textbf{Keywords.} Timoshenko beam; Kelvin-Voigt damping; Semigroup; Stability.

\vspace{-0.5cm}
\end{abstract}
\maketitle
%%%%%%%%%%%%%%%%%%%%%%%%%%%%%%%%%%%%%%%%%%%%%%%%%%
%%%%%%%%%%%%%%%%%%%%%%%%%%%%%%%%%%%%%%%%%%%%%%%%%%
%%%%%%%%%%%%%%%%%%%%%%%%%%%%%%%%%%%%%%%%%%%%%%%%%%
 % Section 1: Introduction %
%%%%%%%%%%%%%%%%%%%%%%%%%%%%%%%%%%%%%%%%%%%%%%%%%%
%%%%%%%%%%%%%%%%%%%%%%%%%%%%%%%%%%%%%%%%%%%%%%%%%%	
%%%%%%%%%%%%%%%%%%%%%%%%%%%%%%%%%%%%%%%%%%%%%%%%%%
\section{Introduction}\label{Section--1}
\noindent In this paper, we study the indirect stability of a one-dimensional Timoshenko system with only one local or global  Kelvin-Voigt damping. This system consists of two coupled hyperbolic equations:
%%%%%%%%%%%%%%%%%%%%Equation%%%%%%%%%%%%%%%%%%%%%%
\begin{equation} \label{E--(1.1)}
\begin{array}{lll}
\displaystyle{\rho_1 u_{tt}-k_1 \left(u_x+y\right)_x=0,}
&\displaystyle{(x,t)\in\left(0,L\right)\times \mathbb{R}_{+},}

         \nline
         
\displaystyle{ \rho_2y_{tt}-\left(k_2y_{x}+Dy_{xt}\right)_x+k_1\left(u_x+y\right)=0,}  &\displaystyle{(x,t)\in\left(0,L\right)\times \mathbb{R}_{+}.}
\end{array}
\end{equation}
System \eqref{E--(1.1)} is subject to the following initial conditions:
%%%%%%%%%%%%%%%%%%%%Equation%%%%%%%%%%%%%%%%%%%%%%
\begin{equation} \label{E--(1.2)}
\begin{array}{lll}
u(x,0)=u_0(x),& u_t(x,0)=u_1(x),&x\in   (0,L),\nline
y(x,0)=y_0(x),& y_t(x,0)=y_1(x),&x\in(0,L),
\end{array}
\end{equation}
in addition to  the following  boundary conditions:
%%%%%%%%%%%%%%%%%%%%Equation%%%%%%%%%%%%%%%%%%%%%%
\begin{equation}\label{E--(1.3)}
u(0,t)=y(0,t)=u(L,t)=y(L,t)=0, \quad \displaystyle{t\in \mathbb{R}_+, }
\end{equation}
or 
%%%%%%%%%%%%%%%%%%%%Equation%%%%%%%%%%%%%%%%%%%%%%
\begin{equation}\label{E--(1.4)}
u(0,t)=y_x(0,t)=u(L,t)=y_x(L,t)=0, \quad \displaystyle{t\in \mathbb{R}_+. }
\end{equation}
Here the coefficients $\rho_1,\ \rho_2,\ k_1$, and $k_2$  are strictly  positive constant numbers. The function $D\in L^{\infty}(0,L)$, such that $D(x)\geq0, \ \forall x\in [0, L]$.   We assume that there exist $D_0>0$, $\alpha,\ \beta\in \mathbb{R}$,  $0\leq \alpha < \beta\leq L,$ such that
%%%%%%%%%%%%%%%%%%%%Equation%%%%%%%%%%%%%%%%%%%%%%
\begin{equation}\tag{$\rm{H}$}
D\in  C\left([\alpha,\beta]\right)\ \ \ \text{and}\ \ \ D(x)\geq D_0>0\ \ \  \forall\ x\in (\alpha, \beta).
\end{equation}
The hypothesis {\rm (H)} means that the control $D$ can be locally near the boundary (see Figures \ref{Fig-1} and \ref{Fig-2}), or locally internal (see Figure \ref{Fig-3}), or globally  (see Figure \ref{Fig-4}). Indeed, in the case when $D$ is local damping (i.e., $\alpha\neq0$ or $\beta\neq L$), we see that $D$ is not necessary   continuous  over $(0,L)$ (see Figures \ref{Fig-1}, \ref{Fig-2}, and \ref{Fig-3}).$\\$
%%%%%%%%%%%%%%%%%%%%Figure%%%%%%%%%%%%%%%%%%%%%%%%
\begin{figure}[!h]
\centering
    \begin{subfigure}[b]{0.32\textwidth}
        \centering
        \resizebox{\linewidth}{!}{
            
           \begin{tikzpicture}[domain=0:4]
   
    \draw[->] (0,0) -- (4,0) node[right] {$x$};
    \draw[->] (0,0) -- (0,3) ;
    \coordinate [label=left:\textcolor{blue}{$0$}] (n1) at (0,0);              
 \coordinate [label=below:\textcolor{blue}{$\alpha$}] (n2) at (1,0) ;
  \coordinate [label=below:\textcolor{blue}{$\beta=L$}]  (n3) at (3,0)  ;   
     \draw[red] (n1)--(n2) ;
    \coordinate [label=above:\textcolor{red}{$D(x)$}](a1)  at (2,1)  ;
    \coordinate [label=above:\textcolor{red}{$D(x)$}](a2)  at (0.6,0.1)  ;
    \draw[color=red, domain=0:1]    plot (\x,0)  ;
    \draw[color=red, domain=1:3]    plot (\x,{sin(\x r)}) ;

\end{tikzpicture}

        }
    \caption{Figure 1a} 
\label{Fig-1}
    \end{subfigure}\hspace{1cm}
    \begin{subfigure}[b]{0.32\textwidth}
    \centering
        \resizebox{\linewidth}{!}{
       \begin{tikzpicture}[domain=0:4]
   
    \draw[->] (0,0) -- (4,0) node[right] {$x$};
    \draw[->] (0,0) -- (0,3);
    \coordinate [label=left:\textcolor{blue}{$0=\alpha$}] (n1) at (0,0);              
 \coordinate [label=below:\textcolor{blue}{$\beta$}] (n2) at (2,0) ;
  \coordinate [label=below:\textcolor{blue}{$L$}]  (n3) at (3,0)  ;  
  \coordinate [label=above:\textcolor{red}{$D(x)$}](a1)  at (1,0.5)  ;
    \coordinate [label=above:\textcolor{red}{$D(x)$}](a2)  at (2.6,0.1)  ; 
     \draw[red] (n2)--(n3) ;
    
    \draw[color=red, domain=2:3]    plot (\x,0)  ;
    \draw[color=red, domain=0:2]    plot (\x,{0.1*exp( \x )}) ;

\end{tikzpicture}	     
        }
   \caption{Figure 1b} 
\label{Fig-2}   
        
    \end{subfigure}
   
\end{figure}

\begin{figure}[!h]
\centering
    \begin{subfigure}[b]{0.32\textwidth}
        \centering
        \resizebox{\linewidth}{!}{
           \begin{tikzpicture}[domain=0:4]
   
    \draw[->] (0,0) -- (4,0) node[right] {$x$};
    \draw[->] (0,0) -- (0,3) ;
    \coordinate [label=left:\textcolor{blue}{$0$}] (n1) at (0,0);  
   \coordinate [label=below:\textcolor{blue}{$\alpha$}] (n2) at (1,0) ;             
 \coordinate [label=below:\textcolor{blue}{$\beta$}] (n2) at (2,0) ;
  \coordinate [label=below:\textcolor{blue}{$L$}]  (n3) at (3,0)  ;   
     \draw[red] (n2)--(n3) ;
    \draw[color=red, domain=0:1]    plot (\x,0)  ;
    \draw[color=red, domain=2:3]    plot (\x,0)  ;
    \draw[color=red, domain=1:2]    plot (\x,{0.3*exp( \x )}) ;
   
    \coordinate [label=above:\textcolor{red}{$D(x)$}](a1)  at (2.6,0.1)  ;
  \coordinate [label=above:\textcolor{red}{$D(x)$}](a3)  at (2,1)  ;
    \coordinate [label=above:\textcolor{red}{$D(x)$}](a2)  at (0.6,0.1)  ;
\end{tikzpicture}

        }
    \caption{Figure 2a} 
\label{Fig-3}
    \end{subfigure}\hspace{1cm}
    \begin{subfigure}[b]{0.32\textwidth}
    \centering
        \resizebox{\linewidth}{!}{
         \begin{tikzpicture}[domain=0:4]
   
    \draw[->] (0,0) -- (4,0) node[right] {$x$};
    \draw[->] (0,0) -- (0,3) ;
    \coordinate [label=left:\textcolor{blue}{$0=\alpha$}] (n1) at (0,0);  

  \coordinate [label=below:\textcolor{blue}{$L=\beta$}]  (n3) at (3,0)  ;   
     \coordinate [label=above:\textcolor{red}{$D(x)$}](a3)  at (2,1)  ;

    \draw[color=red, domain=0:3]    plot (\x,{0.1*exp( \x )}) ;

\end{tikzpicture}	  
        }
   \caption{Figure 2b} 
\label{Fig-4}   
        
    \end{subfigure}
 \end{figure}
\noindent The  Timoshenko system is usually considered in describing the transverse vibration of a beam and ignoring damping effects of any nature. Indeed, we have the following model, see in \cite{Timoshenko01},
%%%%%%%%%%%%%%%%%%%%Equation%%%%%%%%%%%%%%%%%%%%%%
\begin{equation*}
\left\{
\begin{array}{lll}

\displaystyle{\rho\varphi_{tt}= \left(K\left(\varphi_x-\psi\right)\right)_x }

\nline

\displaystyle{I_{\rho} \psi_{tt}=\left( EI\psi_x\right)_{x}-K\left(\varphi_x-\psi\right),}

\end{array}
\right.
\end{equation*}
where $\varphi$ is the transverse displacement of the beam and $\psi$ is the rotation angle of the filament of the beam. The coefficients $\rho,\ I_\rho,\ E,\ I,$ and $K$ are respectively the density (the mass per unit length), the polar moment of inertia of a cross section, Young's
modulus of elasticity, the moment of inertia of a cross section and the shear modulus respectively. \\[0.1in] 
%%%%%%%%%%%%%%%%%%%%%%%%%%%%%%%%%%%%%%%%%%%%%%%%%%
The stabilization of the Timoshenko system with different kinds of damping has been studied in  number of publications.  For the internal stabilization, Raposo and al. in \cite{Santos03} showed that the Timoshenko system with two internal distributed dissipation is exponentially stable. Messaoudi and Mustafa in \cite{Messaoud01} extended the results to nonlinear feedback laws. Soufyane and Wehbe in \cite{WhebeSoufyane-2003}    showed that  Timoshenko system with one internal distributed dissipation law is exponentially stable if and only if the wave propagation speeds are equal (i.e., $\frac{k_1}{\rho_1}=\frac{\rho_2}{k_2}$), otherwise,  only the strong  stability holds. Indeed, Rivera and Racke  in \cite{Racke01}  they improved the results of \cite{WhebeSoufyane-2003}, where an exponential decay of the solution of the system has been established, allowing the coefficient of the feedback to be with an indefinite sign. Wehbe and Youssef in \cite{Wehbe07} proved that the  Timoshenko system with one locally distributed viscous feedback is exponentially stable if and only if the wave propagation speeds are equal (i.e., $\frac{k_1}{\rho_1}=\frac{\rho_2}{k_2}$),  otherwise, only the polynomial stability holds.  Tebou in \cite{Tebou-2015} showed that the   Timoshenko beam with same feedback control in both equations  is exponentially stable. The stability of the Timoshenko system with thermoelastic dissipation  has been studied in \cite{HugoRacke01}, \cite{AlmeidaJnior2013}, \cite{Fatori01}, and \cite{Hao-2018}. The stability of Timoshenko system with memory type has been studied in  \cite{Racke03}, \cite{HugoRacke01}, \cite{MessaoudGuesmia-2009}, \cite{Messaoud02},  and \cite{Wehbe08}.  For the  boundary stabilization of the Timoshenko beam. Kim and Renardy in \cite{Kim01} showed that the Timoshenko beam under two boundary controls is exponentially stable. Ammar-Khodja and al. in \cite{Soufyane01}  studied the decay rate of the energy of the nonuniform Timoshenko beam with two boundary controls acting in the rotation-angle equation. In fact, under the equal speed wave propagation condition, they established exponential decay results up to an unknown finite dimensional space of initial data. In addition, they showed that the equal speed wave propagation condition is necessary for the exponential stability. However, in the case of non-equal speed, no decay rate has been discussed.  This result has been recently improved by Wehbe and al. in \cite{Wehbe06}; i.e., the authors in \cite{Wehbe06},   proved nonuniform stability and an optimal polynomial energy decay rate of the Timoshenko system with only one dissipation law on the boundary. For the stabilization of the Timoshenko beam with nonlinear term,  we mention  \cite{Racke02}, \cite{Alabau03}, \cite{ZuazuaAraruna}, \cite{Messaoud01}, \cite{Cavalcanti-2013},  and \cite{Hao-2018}.\\[0.1in]
%%%%%%%%%%%%%%%%%%%%%%%%%%%%%%%%%%%%%%%%%%%%%%%%%%
 \noindent Kelvin-Voigt material is a viscoelastic structure having properties of both elasticity and viscosity. There are a number of publications concerning the stabilization of wave equation with global or local  Kelvin-Voigt damping. For the global case, the authors  in \cite{Huang-1988, ChenLiuLiu-1998}, proved the analyticity and the exponential stability of the semigroup. When the Kelvin-Voigt damping is localized on an interval of the string, the regularity and stability of the solution depend on the properties of the damping coefficient. Notably, the system is more effectively controlled by the local Kelvin-Voigt damping when the coefficient changes more smoothly near the interface (see \cite{LiuLiu-1998,Renardy-2004,Zhang-2010, LiuZhang-2016, Liu--2017}).\\[0.1in]
%%%%%%%%%%%%%%%%%%%%%%%%%%%%%%%%%%%%%%%%%%%%%%%%%%
 Last but not least, in addition to the previously cited papers, the stability of the Timoshenko system with Kelvin-Voigt damping has been studied in few papers. Zhao and al. in  \cite{Zhao--2004} they considered the Timoshenko system with local distributed Kelvin–Voigt damping:
%%%%%%%%%%%%%%%%%%%%Equation%%%%%%%%%%%%%%%%%%%%%%
 \begin{equation} \label{E--(1.5)}
\begin{array}{lll}

\displaystyle{\rho_1 u_{tt}-\left[k_1 \left(u_x+y\right)_x+D_1 (u_{xt}-y_t)\right]_x=0,}
&\displaystyle{(x,t)\in\left(0,L\right)\times \mathbb{R}_{+},}

         \nline
         
\displaystyle{ \rho_2y_{tt}-\left(k_2y_{x}+D_2y_{xt}\right)_x+k_1 \left(u_x+y\right)_x+D_1 (u_{xt}-y_t)=0,}  &\displaystyle{(x,t)\in\left(0,L\right)\times \mathbb{R}_{+}.}
\end{array}
\end{equation}
They proved that the energy  of the System \eqref{E--(1.5)} subject to Dirichlet-Neumann boundary conditions has an exponential decay rate when coefficient functions $D_1,\ D_2 \in  C^{1,1}([0, L])$ and satisfy $D_1 \leq c D_2 (c > 0).$ Tian and Zhang in \cite{Tian-Zhang-2017} considered the Timoshenko System \eqref{E--(1.5)} under fully Dirichlet boundary conditions   with locally or globally distributed Kelvin-Voigt damping when coefficient functions $D_1,\ D_2 \in  C([0, L])$. First, when the Kelvin-Voigt damping is globally distributed, they showed that the Timoshenko System \eqref{E--(1.5)} under fully Dirichlet boundary conditions  is analytic. Next,  for their  system with local Kelvin-Voigt damping, they analyzed the exponential and polynomial stability according to the properties of coefficient functions $D_1,\ D_2.$ Unlike \cite{Zhao--2004} and  \cite{Tian-Zhang-2017}, in this paper, we consider the Timoshenko system with only one locally or globally distributed Kelvin-Voigt damping $D$ (see System \eqref{E--(1.1)}). Indeed, in this paper,  under hypothesis {\rm (H)}, we show that the energy of the Timoshenko System \eqref{E--(1.1)} subject to initial state \eqref{E--(1.2)} to
either the boundary conditions \eqref{E--(1.3)} or \eqref{E--(1.4)} has a polynomial decay rate of type     $t^{-1}$ and that this decay rate is in some sense optimal.\\[0.1in]
%%%%%%%%%%%%%%%%%%%%%%%%%%%%%%%%%%%%%%%%%%%%%%%%%%
This paper is organized as follows: In Section \ref{Section--2}, first, we show that the Timoshenko System \eqref{E--(1.1)} subject to initial state \eqref{E--(1.2)} to
either the boundary conditions \eqref{E--(1.3)} or \eqref{E--(1.4)} can  reformulate into an evolution equation and we deduce the well-posedness property of the problem by the semigroup approach. Second, using a criteria  of Arendt-Batty \cite{Arendt01}, we show that our system  is strongly stable. In Section \ref{Section--3}, we show that the Timoshenko System \eqref{E--(1.1)}-\eqref{E--(1.2)} with the boundary conditions  \eqref{E--(1.4)}  is not uniformly exponentially stable. In Section \ref{Section--4}, we prove the polynomial energy decay rate of type $t^{-1}$ for the System \eqref{E--(1.1)}-\eqref{E--(1.2)} to either the boundary conditions \eqref{E--(1.3)} or \eqref{E--(1.4)}. Moreover, we prove that this decay rate is in some sense optimal.
%%%%%%%%%%%%%%%%%%%%%%%%%%%%%%%%%%%%%%%%%%%%%%%%%%
%%%%%%%%%%%%%%%%%%%%%%%%%%%%%%%%%%%%%%%%%%%%%%%%%%
%%%%%%%%%%%%%%%%%%%%%%%%%%%%%%%%%%%%%%%%%%%%%%%%%%
 % Section 2: Well-Posedness and Strong Stability %
%%%%%%%%%%%%%%%%%%%%%%%%%%%%%%%%%%%%%%%%%%%%%%%%%%
%%%%%%%%%%%%%%%%%%%%%%%%%%%%%%%%%%%%%%%%%%%%%%%%%%	
%%%%%%%%%%%%%%%%%%%%%%%%%%%%%%%%%%%%%%%%%%%%%%%%%%
\section{Well-Posedness and Strong Stability}\label{Section--2}
%%%%%%%%%%%%%%%%%%%%%%%%%%%%%%%%%%%%%%%%%%%%%%%%%%
%%%%%%%%%%%%%%%%%%%%%%%%%%%%%%%%%%%%%%%%%%%%%%%%%%
 % Section 2.1: Well-Posedness  %
%%%%%%%%%%%%%%%%%%%%%%%%%%%%%%%%%%%%%%%%%%%%%%%%%%
%%%%%%%%%%%%%%%%%%%%%%%%%%%%%%%%%%%%%%%%%%%%%%%%%%
\subsection{Well-posedness of the problem.} \label{Section--2.1}
\noindent In this part, under condition {\rm (H)}, using a semigroup approach, we establish well-posedness result for the Timoshenko System \eqref{E--(1.1)}-\eqref{E--(1.2)} to either the boundary conditions \eqref{E--(1.3)} or \eqref{E--(1.4)}. The energy of solutions of the System \eqref{E--(1.1)} subject to initial state \eqref{E--(1.2)} to either the boundary conditions \eqref{E--(1.3)} or \eqref{E--(1.4)} is defined by:
%%%%%%%%%%%%%%%%%%%%Equation%%%%%%%%%%%%%%%%%%%%%%
\begin{equation*}
E\left(t\right)= \frac{1}{2}\int_0^L\left(\rho_1\left|u_t\right|^2+\rho_2\left|y_t\right|^2+k_1\left|u_x+y\right|^2+k_2\left|y_x\right|^2\right)dx.
\end{equation*}
 Let $\left(u,y\right)$ be a regular solution for the System  \eqref{E--(1.1)}. Multiplying the first and the second  equation of  \eqref{E--(1.1)}   by $u_t$ and $y_t,$  respectively, then  using  the boundary conditions \eqref{E--(1.3)} or \eqref{E--(1.4)}, we get
%%%%%%%%%%%%%%%%%%%%Equation%%%%%%%%%%%%%%%%%%%%%%
\begin{equation*}
E'\left(t\right)=-\int_{0}^LD(x)\left|y_{xt}\right|^2 dx\leq 0.
\end{equation*}
Thus    System \eqref{E--(1.1)} subject to initial state \eqref{E--(1.2)} to
either the boundary conditions \eqref{E--(1.3)} or \eqref{E--(1.4)}  is dissipative in the sense that its energy is non increasing with respect to the time $t$. Let us define the energy spaces  $\mathcal{H}_1$ and $\mathcal{H}_2$ by:
%%%%%%%%%%%%%%%%%%%%Equation%%%%%%%%%%%%%%%%%%%%%%
\begin{equation*}
	\mathcal{H}_1=H_0^1\left(0,L\right)\times L^2\left(0,L\right)\times H_0^1\left(0,L\right)\times L^2\left(0,L\right)
\end{equation*}
and
%%%%%%%%%%%%%%%%%%%%Equation%%%%%%%%%%%%%%%%%%%%%%
\begin{equation*}
	\mathcal{H}_2=H_0^1\left(0,L\right)\times L^2\left(0,L\right)\times H_*^1\left(0,L\right)\times L^2\left(0,L\right),
\end{equation*}
such that
%%%%%%%%%%%%%%%%%%%%Equation%%%%%%%%%%%%%%%%%%%%%%
\begin{equation*}
H_*^1(0,L)=\left\{f\in H^1(0,L)\ |\ \int_0^Lfdx=0\right\}.
\end{equation*}
It is easy to check that the space  $H_*^1$ is  Hilbert spaces over $\mathbb{C}$ equipped  with the norm
$$\left\|u\right\|^2_{H_*^1\left(0,L\right)}=\left\|u_x\right\|^2,$$
where  $\|\cdot\|$  denotes  the usual norm of $L^2\left(0,L\right)$. Both energy spaces $\mathcal{H}_1$ and $\mathcal{H}_2$ are equipped with the inner product defined by:
%%%%%%%%%%%%%%%%%%%%Equation%%%%%%%%%%%%%%%%%%%%%%
\begin{equation*}
\left<U,U_1\right>_{\mathcal{H}_j}=\rho_1 \int_0^Lv\overline{v}_1dx+\rho_2 \int_0^Lz\overline{z}_1dx+k_1\int_0^L\left(u_x+y\right)\overline{\left((u_1)_x+y_1\right)}dx+k_2\int_0^Ly_x\overline{(y_1)_x}dx\end{equation*}
for  all  $U=\left(u,v,y,z\right)$ and $U_1=\left(u_1,v_1,y_1,z_1\right)$ in $\mathcal{H}_j$,\, $j=1,2$. We use  $\|U\|_{\mathcal{H}_j}$ to denote the corresponding norms.
We now define the  following unbounded linear operators $\mathcal{A}_j$ in $\mathcal{H}_j$ by
%%%%%%%%%%%%%%%%%%%%Equation%%%%%%%%%%%%%%%%%%%%%%
\begin{equation*}
D\left(\mathcal{A}_1\right)=\left\{U=(u,v,y,z)\in  \mathcal{H}_1\ |\ v,\ z \in H^1_0(0,L),\ u\in H^2\left(0,L\right),\ \left(k_2y_{x}+Dz_{x}\right)_x\in  L^2\left(0,L\right)\right\},
\end{equation*}
%%%%%%%%%%%%%%%%%%%%Equation%%%%%%%%%%%%%%%%%%%%%%
\begin{equation*}
\begin{array}{ll}
\displaystyle{D\left(\mathcal{A}_2\right)=\bigg\{U=(u,v,y,z)\in  \mathcal{H}_2\ |\ v \in H^1_0(0,L),\ z\in H^1_*(0,L),\  u\in H^2\left(0,L\right),}\nline \hspace{5.2cm} \displaystyle{\left(k_2y_{x}+Dz_{x}\right)_x\in  L^2\left(0,L\right), \ y_x(0)=y_x(L)=0\bigg\}}
\end{array}
\end{equation*}
and for $j=1,2,$
%%%%%%%%%%%%%%%%%%%%Equation%%%%%%%%%%%%%%%%%%%%%%
\begin{equation*}
	\mathcal{A}_jU=\left(v,\frac{k_1}{\rho_1}(u_x+y)_x,z,\frac{1}{\rho_2}\left(k_2y_{x}+Dz_x\right)_x-\frac{k_1}{\rho_2}(u_x+y)\right),\ \ \forall\ U=\left(u,v,y,z\right)\in D\left(\mathcal{A}_j\right).
\end{equation*}
If $U=\left(u,u_t,y,y_t\right)$ is the state of System \eqref{E--(1.1)}-\eqref{E--(1.2)} to
either the boundary conditions \eqref{E--(1.3)} or \eqref{E--(1.4)}, then the Timoshenko system is transformed into a first order evolution equation on the Hilbert space $\mathcal{H}_j$:
%%%%%%%%%%%%%%%%%%%%Equation%%%%%%%%%%%%%%%%%%%%%%
\begin{equation}\label{E--(2.1)}
	\left\{
	\begin{array}{c}
	U_t(x,t)=\mathcal{A}_jU(x,t),\\  \noalign{\medskip}
	U\left(x,0\right)=U_0(x),
	\end{array}
	\right.
\end{equation}
where
%%%%%%%%%%%%%%%%%%%%Equation%%%%%%%%%%%%%%%%%%%%%%
\begin{equation*}
U_0\left(x\right)=\left(u_0(x),u_1(x),y_0(x),y_1(x)\right).
	\end{equation*}
%%%%%%%%%%%%%%%%%%%%%%%%%%%%%%%%%%%%%%%%%%%%%%%%%%
                % Proposition %
%%%%%%%%%%%%%%%%%%%%%%%%%%%%%%%%%%%%%%%%%%%%%%%%%%
\begin{prop}\label{Proposition--2.1}
\rm{	Under hypothesis {\rm (H)}, for $j=1,2,$ the unbounded linear operator $\mathcal{A}_j$ is m-dissipative in the energy space $\mathcal{H}_j$.}
\end{prop}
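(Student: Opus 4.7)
The plan is to verify the two defining conditions of m-dissipativity in the Lumer--Phillips sense: first that $\mathcal{A}_j$ is dissipative, i.e., $\mathrm{Re}\,\langle \mathcal{A}_j U,U\rangle_{\mathcal{H}_j}\le 0$ for every $U\in D(\mathcal{A}_j)$, and second that $0\in\rho(\mathcal{A}_j)$, so that in particular $\lambda I-\mathcal{A}_j$ is surjective for some $\lambda>0$. Density of $D(\mathcal{A}_j)$ in $\mathcal{H}_j$ is immediate since each domain contains the natural core built from $C_c^{\infty}(0,L)$ test functions (with the mean-zero modification in the third slot for $j=2$).

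For dissipativity, I take $U=(u,v,y,z)\in D(\mathcal{A}_j)$ and compute $\langle \mathcal{A}_j U,U\rangle_{\mathcal{H}_j}$ by substituting the four components of $\mathcal{A}_j U$ into the inner product and integrating by parts. The boundary conditions encoded in the domain ($v\in H_0^1$; $z\in H_0^1$ for $j=1$, or $z\in H_*^1$ together with $y_x(0)=y_x(L)=0$ for $j=2$) kill all boundary contributions. The three elastic coupling terms
\[
k_1\!\int_0^L(u_x+y)_x\bar v\,dx \;+\; k_1\!\int_0^L (v_x+z)\overline{(u_x+y)}\,dx \;-\; k_1\!\int_0^L(u_x+y)\bar z\,dx
\]
combine, after integration by parts, into a purely imaginary quantity, so the only surviving real part comes from the viscous contribution, yielding
\[
\mathrm{Re}\,\langle \mathcal{A}_j U,U\rangle_{\mathcal{H}_j}=-\int_0^L D(x)\,|z_x|^2\,dx\le 0.
\]

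For maximality, I show $0\in\rho(\mathcal{A}_j)$: given $F=(f^1,f^2,f^3,f^4)\in\mathcal{H}_j$, the equation $\mathcal{A}_j U=F$ forces $v=f^1$ and $z=f^3$, and reduces to finding $(u,y)$ solving the coupled elliptic system
\[
k_1(u_x+y)_x=\rho_1 f^2,\qquad (k_2 y_x+Df^3_x)_x - k_1(u_x+y)=\rho_2 f^4
\]
on $(0,L)$, with $u\in H_0^1$, together with $y\in H_0^1$ for $j=1$ or $y\in H_*^1$ with $y_x(0)=y_x(L)=0$ for $j=2$. I would recast this as a variational problem on $V_1:=H_0^1(0,L)\times H_0^1(0,L)$ or $V_2:=H_0^1(0,L)\times H_*^1(0,L)$ using the bilinear form
\[
a\bigl((u,y),(\phi,\psi)\bigr)=k_1\!\int_0^L(u_x+y)\overline{(\phi_x+\psi)}\,dx+k_2\!\int_0^L y_x\,\overline{\psi_x}\,dx
\]
and the bounded antilinear form $L(\phi,\psi)=-\rho_1\!\int f^2\bar\phi\,dx-\rho_2\!\int f^4\bar\psi\,dx-\!\int Df^3_x\overline{\psi_x}\,dx$, which uses only $f^3\in H^1$ and $D\in L^{\infty}$. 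Continuity is clear in the $H^1$ norms, and coercivity on $V_j$ follows from the standard estimate $\|u_x\|^2\le 2\|u_x+y\|^2+2\|y\|^2$ combined with the Poincaré (respectively Poincaré--Wirtinger) inequality applied to $y$. Lax--Milgram then delivers a unique $(u,y)\in V_j$ satisfying the weak form.

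Finally, I would upgrade the regularity and recover the boundary conditions required by $D(\mathcal{A}_j)$. Testing only with $\phi\in\mathcal{D}(0,L)$, $\psi=0$, gives $(u_x+y)_x=\tfrac{\rho_1}{k_1}f^2\in L^2$, hence $u\in H^2$; testing only with $\psi\in\mathcal{D}(0,L)$, $\phi=0$, gives $(k_2 y_x+Df^3_x)_x\in L^2$. The step I expect to be most delicate is, for $j=2$, reconciling the essential boundary condition $y_x(0)=y_x(L)=0$ demanded by $D(\mathcal{A}_2)$ with the natural boundary identity $(k_2 y_x+Df^3_x)|_{0,L}=0$ that emerges from the weak formulation on $V_2$; this has to be handled either by restricting the space of admissible test functions in a way consistent with the geometry of $D$ provided by hypothesis (H), or by exploiting the vanishing of $D$ at the relevant endpoints. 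Together with dissipativity, the invertibility of $\mathcal{A}_j$ yields $0\in\rho(\mathcal{A}_j)$ and therefore m-dissipativity.
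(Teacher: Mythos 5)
Your overall route is exactly the paper's: the same dissipativity computation yielding $\Re\left<\mathcal{A}_jU,U\right>_{\mathcal{H}_j}=-\int_0^L D(x)|z_x|^2\,dx$, the same variational problem with the same bilinear form solved by Lax--Milgram on $\mathcal{V}_j(0,L)$, and a regularity upgrade giving $0\in\rho(\mathcal{A}_j)$. But two of your steps are defective as written. First, the density claim fails: a quadruple of $C_c^{\infty}(0,L)$ functions does \emph{not} in general belong to $D(\mathcal{A}_j)$, because the domain requires $(k_2y_x+Dz_x)_x\in L^2(0,L)$, i.e.\ $k_2y_x+Dz_x\in H^1(0,L)$, and with $D$ merely $L^{\infty}$ outside $[\alpha,\beta]$ (take $D$ an indicator function there) the product $Dz_x$ has jump discontinuities wherever $z_x\neq 0$, so $(Dz_x)_x$ is a measure rather than an $L^2$ function, and a smooth $y$ cannot compensate. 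There is thus no obvious smooth core. The paper never exhibits one: it deduces density \emph{a posteriori} from dissipativity together with $0\in\rho(\mathcal{A}_j)$ (hence $\lambda\in\rho(\mathcal{A}_j)$ for small $\lambda>0$), invoking the standard fact that a dissipative operator on a Hilbert space for which $\lambda I-A$ is surjective is automatically densely defined (Theorems 4.5--4.6 in \cite{Pazy01}). Replace your remark by that argument.

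Second, the step you yourself flag as delicate for $j=2$ is a genuine gap in your write-up, and neither of your proposed escapes is available: hypothesis {\rm (H)} does not make $D$ vanish at the endpoints (in the global case $\alpha=0$, $\beta=L$, continuity forces $D(0),D(L)\geq D_0>0$), and shrinking the test space would destroy the Lax--Milgram framework on $\mathcal{V}_2(0,L)$. The resolution is interpretive rather than technical. Since $z$ is prescribed only in $H_*^1(0,L)$, the trace of $y_x$ alone at $0$ and $L$ need not exist; the only quantity with a well-defined trace is the combination $k_2y_x+Dz_x$, which lies in $H^1(0,L)$ once its derivative is in $L^2$. The Neumann condition in $D(\mathcal{A}_2)$ must therefore be read as $(k_2y_x+Dz_x)(0)=(k_2y_x+Dz_x)(L)=0$, and this is precisely the natural boundary condition your weak formulation already delivers, because the boundary values $\psi(0)$, $\psi(L)$ of mean-zero test functions $\psi\in H_*^1(0,L)$ are free parameters. (The same reading is needed in your dissipativity computation for $j=2$, to kill the boundary term $\left[(k_2y_x+Dz_x)\bar z\right]_0^L$, since $z$ does not vanish at the endpoints there either.) With these two repairs your proof coincides with the paper's, which compresses both points into the phrase ``classical regularity arguments'' and the citation of \cite{Pazy01}.
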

%%%%%%%%%%%%%%%%%%%%%%%%%%%%%%%%%%%%%%%%%%%%%%%%%%
                % Proof of  Proposition  %
%%%%%%%%%%%%%%%%%%%%%%%%%%%%%%%%%%%%%%%%%%%%%%%%%%
		\begin{proof} Let $j=1,2$, for $U=(u,v,y,z)\in D\left(\mathcal{A}_j\right)$, one has
%%%%%%%%%%%%%%%%%%%%Equation%%%%%%%%%%%%%%%%%%%%%%
\begin{equation*}
		\Re\left<\mathcal{A}_jU,U\right>_{\mathcal{H}_j}=	-\int_{0}^LD(x)\left|z_{x}\right|^2 dx\leq 0,
\end{equation*}
which implies that $\mathcal{A}_j$ is dissipative under hypothesis {\rm (H)}. Here $\Re$ is used to denote the real part of a complex number. We next prove the maximality of $\mathcal{A}_j$.
For $F=(f_1,f_2,f_3,f_4)\in\mathcal{H}_j$, we prove the existence of $U=(u,v,y,z)\in D(\mathcal{A}_j)$, unique solution of the equation 
%%%%%%%%%%%%%%%%%%%%Equation%%%%%%%%%%%%%%%%%%%%%%
			\begin{equation*}
						-\mathcal{A}_jU=F.
			\end{equation*}
Equivalently, one must consider the  system given by
%%%%%%%%%%%%%%%%%%%%Equation%%%%%%%%%%%%%%%%%%%%%%
			\begin{eqnarray}
			-v&=&f_1,\label{E--(2.2)}\\
			-{k_1}(u_x+y)_x&=&{\rho_1}f_2,\label{E--(2.3)}\\
			-z&=&f_3,\label{E--(2.4)}\\
			-\left({k_2}y_{x}+Dz_x\right)_x+{k_1}(u_x+y)&=&{\rho_2}f_4,\label{E--(2.5)}
			\end{eqnarray}
with the boundary conditions 	
%%%%%%%%%%%%%%%%%%%%Equation%%%%%%%%%%%%%%%%%%%%%%
\begin{equation}\label{E--(2.6)}
u(0)=u(L)=v(0)=v(L)=0\ \ \ \text{and}\ \ \ \left\{\begin{array}{ll}
 y(0)=y(L)=z(0)=z(L)=0,\quad \text{for }j=1,\nline
  y_x(0)=y_x(L)=0,\qquad \qquad \qquad\,\,\,   \text{for }j=2.
 \end{array}\right.
\end{equation}
Let $(\varphi,\psi)\in \mathcal{V}_j(0,L)$, where  $\mathcal{V}_1(0,L)=H_0^1(0,L)\times H_0^1(0,L)$ and $\mathcal{V}_2(0,L)=H_0^1(0,L)\times H_*^1(0,L)$. Multiplying Equations \eqref{E--(2.3)} and \eqref{E--(2.5)}  by $\overline{\varphi}$ and $\overline{\psi}$ respectively, integrating in $(0,L)$, taking the sum, then  using Equation \eqref{E--(2.4)} and the boundary condition \eqref{E--(2.6)}, we get
%%%%%%%%%%%%%%%%%%%%Equation%%%%%%%%%%%%%%%%%%%%%%
\begin{equation}\label{E--(2.7)}
\int_0^L \left(k_1 \left(u_x+y \right)\overline{\left( \varphi_x+\psi \right)}+k_2y_x\overline{\psi_{x}}\right)dx=\int_0^L\left( \rho_1 f_1 \bar{\varphi}+\rho_2f_4\bar{ \psi}+D\left(f_3\right)_x\overline{\psi_x}\right) dx,\quad \forall\ (\varphi,\psi)\in  \mathcal{V}_j(0,L).
\end{equation}
The left hand side of  \eqref{E--(2.7)} is a bilinear continuous coercive form on $\mathcal{V}_j(0,L)\times \mathcal{V}_j(0,L)$,  and  the right  hand side of  \eqref{E--(2.7)} is a linear continuous form on  $\mathcal{V}_j(0,L)$. Then, using Lax-Milligram theorem (see in \cite{Pazy01}), we deduce that there exists $(u,y)\in \mathcal{V}_j(0,L)$ unique solution of the variational Problem \eqref{E--(2.7)}. Thus, using \eqref{E--(2.2)}, \eqref{E--(2.4)}, and classical regularity arguments, we conclude that $-\mathcal{A}_jU=F$ admits a unique solution $U\in D\left(\mathcal{A}_j\right)$ and consequently $0\in \rho(\mathcal{A}_j)$,  where $\rho\left(\mathcal{A}_j\right)$ denotes the resolvent set of $\mathcal{A}_j$. Then, $\mathcal{A}_j$ is closed and consequently $\rho\left(\mathcal{A}_j\right)$ is open set of $\mathbb{C}$ (see Theorem 6.7 in \cite{Kato01}). Hence,  we easily get $\lambda\in\rho\left(\mathcal{A}_j\right)$  for sufficiently small $\lambda>0 $. This, together with the dissipativeness of $\mathcal{A}_j$, imply that   $D\left(\mathcal{A}_j\right)$ is dense in $\mathcal{H}_j$   and that $\mathcal{A}_j$ is m-dissipative in $\mathcal{H}_j$ (see Theorems 4.5, 4.6 in  \cite{Pazy01}). Thus, the proof is complete.	 
\end{proof}$\\[0.1in]$
%%%%%%%%%%%%%%%%%%%%%%%%%%%%%%%%%%%%%%%%%%%%%%%%%%
\noindent	Thanks to  Lumer-Phillips theorem (see \cite{LiuZheng01, Pazy01}), we deduce that $\mathcal{A}_j$ generates a  $C_0$-semigroup of contraction $e^{t\mathcal{A}_j}$ in $\mathcal{H}_j$ and therefore  Problem \eqref{E--(2.1)} is well-posed. Then, we have the following result.
%%%%%%%%%%%%%%%%%%%%%%%%%%%%%%%%%%%%%%%%%%%%%%%%%%
                % Theorem %
%%%%%%%%%%%%%%%%%%%%%%%%%%%%%%%%%%%%%%%%%%%%%%%%%%
\begin{thm}\label{Theorem--2.2}
\rm{Under hypothesis {\rm (H)}, for $j=1,2,$ for any $U_0\in\mathcal{H}_j$, the Problem \eqref{E--(2.1)}  admits a unique weak solution $U(x,t)=e^{t\mathcal{A}_j}U_0(x)$, such that
%%%%%%%%%%%%%%%%%%%%Equation%%%%%%%%%%%%%%%%%%%%%%
\begin{equation*}
U\in C\left(\mathbb{R}_{+};\mathcal{H}_j\right).
\end{equation*}
Moreover, if $U_0\in D\left(\mathcal{A}_j\right),$ then
%%%%%%%%%%%%%%%%%%%%Equation%%%%%%%%%%%%%%%%%%%%%%
\begin{equation*}
U\in C\left(\mathbb{R}_{+};D\left(\mathcal{A}_j\right)\right) \cap C^1\left(\mathbb{R}_{+};\mathcal{H}_j\right).
\end{equation*}
\xqed{$\square$}}\end{thm}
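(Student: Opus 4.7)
The statement is essentially a direct consequence of Proposition \ref{Proposition--2.1}, so the plan is to invoke standard semigroup theory rather than to do any fresh analysis. My approach is to recognize that the abstract Cauchy problem \eqref{E--(2.1)} is of the form $U_t = \mathcal{A}_j U$, $U(0) = U_0$ on the Hilbert space $\mathcal{H}_j$, and that Proposition \ref{Proposition--2.1} has already done the real work by establishing that $\mathcal{A}_j$ is m-dissipative, i.e.\ dissipative with $\mathrm{Range}(I-\mathcal{A}_j) = \mathcal{H}_j$ (in fact the proof there gave $0 \in \rho(\mathcal{A}_j)$ and $\lambda \in \rho(\mathcal{A}_j)$ for small $\lambda > 0$, which combined with dissipativity yields maximality by a perturbation of the resolvent).

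The first step is to apply the Lumer--Phillips theorem (see \cite{Pazy01,LiuZheng01}), which asserts that every densely defined m-dissipative operator on a Hilbert space generates a $C_0$-semigroup of contractions. Density of $D(\mathcal{A}_j)$ in $\mathcal{H}_j$ was obtained in Proposition \ref{Proposition--2.1}, so this theorem applies and produces the contraction semigroup $\bigl(e^{t\mathcal{A}_j}\bigr)_{t\geq 0}$ on $\mathcal{H}_j$.

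The second step is to record the regularity statements that come for free from the semigroup formalism. For arbitrary $U_0 \in \mathcal{H}_j$, the mild solution $U(t) = e^{t\mathcal{A}_j}U_0$ satisfies $U \in C(\mathbb{R}_+;\mathcal{H}_j)$ by strong continuity of the semigroup. For $U_0 \in D(\mathcal{A}_j)$, a standard result on $C_0$-semigroups (e.g.\ \cite[Theorem 1.2.4]{Pazy01} type statements) gives that $U(t) \in D(\mathcal{A}_j)$ for all $t\geq 0$, with $t \mapsto U(t)$ continuous into $D(\mathcal{A}_j)$ (endowed with the graph norm) and continuously differentiable into $\mathcal{H}_j$, producing $U \in C(\mathbb{R}_+;D(\mathcal{A}_j)) \cap C^1(\mathbb{R}_+;\mathcal{H}_j)$. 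Uniqueness in both cases is also part of the Lumer--Phillips framework.

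There is essentially no obstacle here: the whole difficulty was absorbed into Proposition \ref{Proposition--2.1}, in particular into the coercivity of the bilinear form on $\mathcal{V}_j(0,L)\times \mathcal{V}_j(0,L)$ that underlies maximality. Once that is in hand, the proof reduces to citing Lumer--Phillips and quoting the standard regularity of semigroup orbits. Consequently my write-up would be a two-sentence proof: (i) by Proposition \ref{Proposition--2.1} and the Lumer--Phillips theorem, $\mathcal{A}_j$ generates a $C_0$-semigroup of contractions $e^{t\mathcal{A}_j}$ on $\mathcal{H}_j$; (ii) the stated regularity follows from the general theory of $C_0$-semigroups applied to initial data in $\mathcal{H}_j$, respectively in $D(\mathcal{A}_j)$.
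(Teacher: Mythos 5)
Your proposal is correct and matches the paper's own treatment: the paper likewise derives Theorem \ref{Theorem--2.2} by combining the m-dissipativity of $\mathcal{A}_j$ from Proposition \ref{Proposition--2.1} with the Lumer--Phillips theorem (citing \cite{LiuZheng01, Pazy01}) and then quoting the standard regularity of semigroup orbits for $U_0\in\mathcal{H}_j$ and $U_0\in D\left(\mathcal{A}_j\right)$. No gap; your write-up is the intended proof.
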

%%%%%%%%%%%%%%%%%%%%%%%%%%%%%%%%%%%%%%%%%%%%%%%%%%
\noindent Before starting the main results of this work, we introduce here the notions of stability that we encounter in this work.
%%%%%%%%%%%%%%%%%%%%%%%%%%%%%%%%%%%%%%%%%%%%%%%%%%
                % Definition %
%%%%%%%%%%%%%%%%%%%%%%%%%%%%%%%%%%%%%%%%%%%%%%%%%%
\begin{defi}\label{Definition--2.3}
\rm{Let $A:D(A)\subset H\to H $  generate a C$_0-$semigroup of contractions $\left(e^{t A}\right)_{t\geq0}$  on $H$. The  $C_0$-semigroup $\left(e^{t A}\right)_{t\geq0}$  is said to be
%%%%%%%%%%%%%%%%%%Enumerate%%%%%%%%%%%%%%%%%%%%%%%
\begin{enumerate}
%%%%%%%%%%%%%%%%%%%%%%item%%%%%%%%%%%%%%%%%%%%%%%%
\item[1.]  strongly stable if 
%%%%%%%%%%%%%%%%%%%%Equation%%%%%%%%%%%%%%%%%%%%%%
\begin{equation*}
\lim_{t\to +\infty} \|e^{t A}x_0\|_{H}=0, \quad\forall \ x_0\in H;
\end{equation*}
%%%%%%%%%%%%%%%%%%%%%%item%%%%%%%%%%%%%%%%%%%%%%%%
\item[2.]  exponentially (or uniformly) stable if there exist two positive constants $M$ and $\epsilon$ such that
%%%%%%%%%%%%%%%%%%%%Equation%%%%%%%%%%%%%%%%%%%%%%
\begin{equation*}
\|e^{t A}x_0\|_{H} \leq Me^{-\epsilon t}\|x_0\|_{H}, \quad
\forall\  t>0,  \ \forall \ x_0\in {H};
\end{equation*}
%%%%%%%%%%%%%%%%%%%%%%item%%%%%%%%%%%%%%%%%%%%%%%%
\item[3.] polynomially stable if there exists two positive constants $C$ and $\alpha$ such that
%%%%%%%%%%%%%%%%%%%%Equation%%%%%%%%%%%%%%%%%%%%%%
\begin{equation*}
 \|e^{t A}x_0\|_{H}\leq C t^{-\alpha}\|A x_0\|_{H},  \quad\forall\ 
t>0,  \ \forall \ x_0\in D\left(A\right).
\end{equation*}
In that case, one says that solutions of \eqref{E--(2.1)} decay at a rate $t^{-\alpha}$.
\noindent The  $C_0$-semigroup $\left(e^{t A}\right)_{t\geq0}$  is said to be  polynomially stable with optimal decay rate $t^{-\alpha}$ (with $\alpha>0$) if it is polynomially stable with decay rate $t^{-\alpha}$ and, for any $\varepsilon>0$ small enough, there exists solutions of \eqref{E--(2.1)} which do not decay at a rate $t^{-(\alpha+\varepsilon)}$.
\end{enumerate}}
\xqed{$\square$}\end{defi}
%%%%%%%%%%%%%%%%%%%%%%%%%%%%%%%%%%%%%%%%%%%%%%%%%%
\noindent We now look  for necessary conditions to show the strong stability of the $C_0$-semigroup $\left(e^{t A}\right)_{t\geq0}$. We will rely on the following result obtained by Arendt and Batty in \cite{Arendt01}. 
%%%%%%%%%%%%%%%%%%%%%%%%%%%%%%%%%%%%%%%%%%%%%%%%%%
                % Theorem %
%%%%%%%%%%%%%%%%%%%%%%%%%%%%%%%%%%%%%%%%%%%%%%%%%%
\begin{thm}[Arendt and Batty in \cite{Arendt01}]\label{Theorem--2.4}
\rm{ Let $A:D(A)\subset H\to H $  generate a C$_0-$semigroup of contractions $\left(e^{tA}\right)_{t\geq0}$  on $H$. If
%%%%%%%%%%%%%%%%%%Enumerate%%%%%%%%%%%%%%%%%%%%%%%
\begin{enumerate}
%%%%%%%%%%%%%%%%%%%%%%item%%%%%%%%%%%%%%%%%%%%%%%%
 \item[1.]  $A$ has no pure imaginary eigenvalues,
%%%%%%%%%%%%%%%%%%%%%%item%%%%%%%%%%%%%%%%%%%%%%%%
  \item[2.]  $\sigma\left(A\right)\cap i\mathbb{R}$ is countable,
 \end{enumerate}
where $\sigma\left(A\right)$ denotes the spectrum of $A$, then the $C_0$-semigroup $\left(e^{tA}\right)_{t\geq0}$  is strongly stable.}\xqed{$\square$}\end{thm}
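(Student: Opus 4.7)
The plan is to combine the contractivity of the semigroup with the two spectral hypotheses to rule out any nontrivial weak asymptotic recurrence of orbits, following the route of Arendt and Batty. Since $\left(e^{tA}\right)_{t\geq 0}$ is a contraction semigroup on the Hilbert space $H$ and $H$ is reflexive, each orbit is bounded and hence relatively weakly compact; the first goal would be to show that the weak $\omega$-limit set of every orbit is reduced to $\{0\}$. Combined with the fact that $t\mapsto \|e^{tA}x_0\|_H$ is nonincreasing, weak convergence to $0$ upgrades to strong convergence, which is exactly the conclusion we seek.

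To identify the weak $\omega$-limit sets I would invoke the Jacobs--Glicksberg--de Leeuw splitting: $H$ decomposes as $H=H_r\oplus H_s$, where $H_r$ is the closed linear span of eigenvectors of $A$ associated with purely imaginary eigenvalues and every orbit starting in $H_s$ tends weakly to $0$. Hypothesis~1 gives $H_r=\{0\}$ directly, so the whole question reduces to showing $H_s=H$, and this is where hypothesis~2 must be brought in.

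The main obstacle, and where I expect the technical work to lie, is extracting strong stability from the mere countability of $K:=\sigma(A)\cap i\mathbb{R}$. My approach here would be a transfinite induction on the Cantor--Bendixson derivatives of the compact countable set $K$: at each stage one peels off isolated points of $K$ via Riesz spectral projections, and hypothesis~1 prevents these points from producing nonzero vectors in the weak $\omega$-limit set, so their contribution vanishes. A cleaner variant would be to pass to the cogenerator $T=(A+I)(A-I)^{-1}$, a Hilbert-space contraction whose peripheral spectrum is the Cayley image of $K$; since $T$ has countable peripheral spectrum and no unimodular eigenvalues, the Katznelson--Tzafriri theorem yields $\|T^n(T-I)\|\to 0$, which one can translate back into $\|e^{tA}x_0\|_H\to 0$ for every $x_0\in H$.

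The delicate point in either route is handling the non-isolated imaginary spectral points of $A$ and ensuring that the spectral projections respect the semigroup action: this is precisely where countability (rather than merely measure zero or nowhere density) of $K$ is essential, because it guarantees that the Cantor--Bendixson rank is a countable ordinal and that peripheral spectral synthesis succeeds. If time permitted, I would also sanity-check the argument in the simplest case $K=\{i\eta_n: n\in\mathbb{N}\}\cup\{0\}$ with $\eta_n\to 0$, where one can see explicitly how the isolated-point stripping proceeds and how the accumulation point is absorbed in the limit.
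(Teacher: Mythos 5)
First, a point of comparison: the paper offers no proof of this statement --- Theorem \ref{Theorem--2.4} is quoted verbatim from \cite{Arendt01} as a known stability criterion --- so your proposal must be measured against the classical proofs (the tauberian argument of Arendt--Batty, or the Lyubich--V\~u limit-isometry argument), and against those it has genuine gaps. The most serious one is the bridging step you rely on at the outset: weak convergence $e^{tA}x_0\rightharpoonup 0$ together with the nonincreasing norm $t\mapsto\|e^{tA}x_0\|_H$ does \emph{not} upgrade to strong convergence. The translation group on $L^2(\mathbb{R})$ is a contraction (indeed unitary) semigroup for which every orbit tends weakly to $0$ while every norm is constant; this example violates hypothesis 2, of course, but it shows the implication you invoke is false as a piece of reasoning, and the whole difficulty of the theorem is precisely that weak and strong stability differ. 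Relatedly, you misstate the Jacobs--Glicksberg--de Leeuw splitting: on the stable part $H_s$ one only knows that $0$ lies in the \emph{weak closure} of each orbit (flight vectors), not that orbits converge weakly to $0$, so even the weak statement you want is not delivered by JdLG plus hypothesis 1.

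The technical core of your plan also fails as described. An isolated point $i\eta$ of $K=\sigma(A)\cap i\mathbb{R}$ need not be isolated in $\sigma(A)$: spectrum in the open left half-plane may accumulate at $i\eta$, so no Riesz projection exists and there is nothing to ``peel off''. The standard way around this --- and the key idea missing from your sketch --- is the limit isometric semigroup of Lyubich--V\~u: set $\ell(x)=\lim_{t\to\infty}\|e^{tA}x\|$, pass to the completion of $H/\ker\ell$, where the induced semigroup is isometric, extends to a group whose (nonempty, if $\ell\not\equiv 0$) spectrum is contained in $K$; a nonempty countable compact set has an isolated point, isolated spectral points of isometric groups are eigenvalues, and lifting the eigenvector back contradicts hypothesis 1 (here one also needs that for contraction semigroups on a Hilbert space the absence of imaginary eigenvalues of $A$ forces the same for $A^*$ --- the general Banach-space theorem requires $\sigma_p(A^*)\cap i\mathbb{R}=\emptyset$ as a separate hypothesis, a point your proposal never addresses). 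Finally, the Katznelson--Tzafriri fallback is misquoted: $\|T^n(T-I)\|\to 0$ is equivalent to $\sigma(T)\cap\mathbb{T}\subseteq\{1\}$, so it only covers $K\subseteq\{0\}$; for countable peripheral spectrum without unimodular eigenvalues the relevant conclusion is the strong stability $T^nx\to 0$, which is exactly the discrete Arendt--Batty--Lyubich--V\~u theorem --- invoking it here would be circular.
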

%%%%%%%%%%%%%%%%%%%%%%%%%%%%%%%%%%%%%%%%%%%%%%%%%%
\noindent Our subsequent findings on  polynomial  stability  will rely on the following result from \cite{Borichev01, RaoLiu01, Batty01}, which gives necessary and sufficient conditions for a semigroup to be polynomially stable. For this aim, we recall the following standard result  (see \cite{Borichev01, RaoLiu01, Batty01} for part (i) and \cite{Huang01,pruss01} for part (ii)).
%%%%%%%%%%%%%%%%%%%%%%%%%%%%%%%%%%%%%%%%%%%%%%%%%%
                % Theorem %
%%%%%%%%%%%%%%%%%%%%%%%%%%%%%%%%%%%%%%%%%%%%%%%%%%
\begin{thm}\label{Theorem--2.5}
\rm{Let $A:D(A)\subset H\to H $  generate a C$_0-$semigroup of contractions $\left(e^{t A}\right)_{t\geq0}$  on $H$.  Assume that $i\lambda \in \rho(A),\ \forall \ \lambda\in \mathbb{R}$. Then, 
the  $C_0$-semigroup $\left(e^{t A}\right)_{t\geq0}$  is
%%%%%%%%%%%%%%%%%%Enumerate%%%%%%%%%%%%%%%%%%%%%%%
\begin{enumerate}
%%%%%%%%%%%%%%%%%%%%%%item%%%%%%%%%%%%%%%%%%%%%%%%
\item[(i)] Polynomially stable of order $\frac{1}{\ell}\, (\ell>0)$ if and only if 
%%%%%%%%%%%%%%%%%%%%Equation%%%%%%%%%%%%%%%%%%%%%%
\begin{equation*}
\lim \sup_{\lambda\in \mathbb{R},\ |\lambda|\to \infty} |\lambda|^{-\ell}\left\|\left(i\lambda I-A\right)^{-1}\right\|_{\mathcal{L}\left(H\right)}<+\infty.
\end{equation*}
%%%%%%%%%%%%%%%%%%%%%%item%%%%%%%%%%%%%%%%%%%%%%%%
\item[(ii)]  Exponentially stable if and only if
%%%%%%%%%%%%%%%%%%%%Equation%%%%%%%%%%%%%%%%%%%%%%
\begin{equation*}
\lim \sup_{\lambda\in \mathbb{R},\ |\lambda|\to \infty}\left\|\left(i\lambda I-A\right)^{-1}\right\|_{\mathcal{L}\left(H\right)}<+\infty.
\end{equation*}
\end{enumerate}}\xqed{$\square$}\end{thm}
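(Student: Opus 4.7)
The statement is the classical Gearhart--Huang--Pr\"uss theorem for part (ii) and the Borichev--Tomilov theorem (building on Batty--Duyckaerts and Liu--Rao) for part (i); since the paper cites it rather than proves it, I would only sketch the standard strategy for each direction, exploiting crucially that $H$ is a Hilbert space.

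For \textbf{(ii)}, the ``only if'' direction is immediate: if $\|e^{tA}\|\le Me^{-\epsilon t}$, then for $\Re\mu>-\epsilon$ the Laplace transform identity
\begin{equation*}
(\mu I-A)^{-1}x=\int_0^\infty e^{-\mu t}e^{tA}x\,dt
\end{equation*}
gives a uniform bound, which in particular bounds $(i\lambda I-A)^{-1}$ on $\mathbb{R}$. The substantive ``if'' direction uses the assumed bound on the imaginary axis together with $i\mathbb{R}\subset\rho(A)$ to extend the resolvent analytically to a strip $\{\Re\mu>-\delta\}$ for some $\delta>0$. Applying Plancherel's theorem in $H$ to the inverse Laplace representation of $e^{tA}x$ on $x\in D(A)$, one converts the pointwise resolvent bound into an $L^2(\mathbb{R};H)$ bound on the orbit; a contour shift and a semigroup composition argument then upgrade this to the pointwise exponential decay $\|e^{tA}\|\le Me^{-\delta t}$.

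For \textbf{(i)}, the ``only if'' direction follows by writing, for $x=A^{-1}y\in D(A)$ and $|\lambda|$ large, the identity $(i\lambda I-A)^{-1}x=(i\lambda)^{-1}\bigl(x+(i\lambda I-A)^{-1}Ax\bigr)$, combined with the Laplace transform of $e^{tA}A^{-1}$ and the polynomial decay hypothesis $\|e^{tA}A^{-1}\|\le Ct^{-1/\ell}$. The ``if'' direction is the deep content: given the resolvent growth bound $\|(i\lambda I-A)^{-1}\|\lesssim|\lambda|^\ell$, one decomposes $t\mapsto e^{tA}A^{-1}x$ via a smooth frequency cutoff, applies Plancherel on $H$ to transfer the polynomial resolvent growth into integral bounds, and interpolates to obtain the sharp decay rate $t^{-1/\ell}$ on $D(A)$. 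The \textbf{main obstacle} is precisely this last step: matching the polynomial resolvent growth to the optimal decay rate without logarithmic loss requires the Hilbert space structure via Plancherel, and the full argument is delicate. Rather than reproduce the technical details, I would invoke \cite{Huang01,pruss01} for (ii) and \cite{Borichev01,Batty01,RaoLiu01} for (i), where the theorem is proved in this sharp form.
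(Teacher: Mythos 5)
The paper does not actually prove Theorem \ref{Theorem--2.5}: it recalls it as a standard result and cites \cite{Borichev01, RaoLiu01, Batty01} for part (i) and \cite{Huang01, pruss01} for part (ii), which is exactly what you do. Your sketch of the four implications (Laplace-transform bound for the easy directions, Plancherel in the Hilbert space plus analytic extension to a strip for the Gearhart--Huang--Pr\"uss direction, and the Borichev--Tomilov frequency-decomposition argument for the sharp polynomial rate) is consistent with the proofs in those references, so your proposal is correct and takes essentially the same approach as the paper.
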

%%%%%%%%%%%%%%%%%%%%%%%%%%%%%%%%%%%%%%%%%%%%%%%%%%
%%%%%%%%%%%%%%%%%%%%%%%%%%%%%%%%%%%%%%%%%%%%%%%%%%
 % Section 2.2: Strong Stability %
%%%%%%%%%%%%%%%%%%%%%%%%%%%%%%%%%%%%%%%%%%%%%%%%%%
%%%%%%%%%%%%%%%%%%%%%%%%%%%%%%%%%%%%%%%%%%%%%%%%%%	
\subsection{Strong stability}\label{Section--2.2}
In this part, we use  general criteria of Arendt-Batty in \cite{Arendt01} (see Theorem \ref{Theorem--2.4})   to show the strong stability of the $C_0$-semigroup $e^{t\mathcal{A}_j}$ associated to the Timoshenko System \eqref{E--(2.1)}. Our main result is the following theorem.
%%%%%%%%%%%%%%%%%%%%%%%%%%%%%%%%%%%%%%%%%%%%%%%%%%
                % Theorem %
%%%%%%%%%%%%%%%%%%%%%%%%%%%%%%%%%%%%%%%%%%%%%%%%%%
\begin{thm}\label{Theorem--2.6}
\rm{Assume that {\rm (H)} is true. Then, for $j=1,2,$ the $C_0-$semigroup $e^{t\mathcal{A}_j}$ is strongly stable in $\mathcal{H}_j$; i.e., for all $U_0\in\mathcal{H}_j$, the solution of \eqref{E--(2.1)} satisfies
%%%%%%%%%%%%%%%%%%%%Equation%%%%%%%%%%%%%%%%%%%%%%
\begin{equation*}
\lim_{t\to+\infty}\left\|e^{t\mathcal{A}_j} U_0\right\|_{\mathcal{H}_j}=0.
\end{equation*}
}\end{thm}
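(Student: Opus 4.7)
The plan is to verify the two hypotheses of the Arendt--Batty criterion (Theorem~\ref{Theorem--2.4}): absence of purely imaginary eigenvalues and countability of $\sigma(\mathcal{A}_j)\cap i\mathbb{R}$. I will in fact aim for the stronger conclusion $i\mathbb{R}\subset\rho(\mathcal{A}_j)$, which implies both; since $0\in\rho(\mathcal{A}_j)$ by Proposition~\ref{Proposition--2.1}, the zero frequency is already handled.

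To rule out $i\lambda$, $\lambda\in\mathbb{R}^{\ast}$, as an eigenvalue I would take the real part of the inner product of $\mathcal{A}_j U=i\lambda U$ with $U=(u,v,y,z)\in D(\mathcal{A}_j)$, which gives
\begin{equation*}
0=\Re\langle\mathcal{A}_j U,U\rangle_{\mathcal{H}_j}=-\int_0^L D(x)\,|z_x|^2\,dx.
\end{equation*}
Hypothesis (H) then forces $z_x\equiv 0$, and hence $y_x\equiv 0$ (since $z=i\lambda y$ and $\lambda\neq 0$), on $(\alpha,\beta)$. Substituting back into the componentwise eigenvalue equations derived from \eqref{E--(2.3)}--\eqref{E--(2.5)} with $v=i\lambda u$ and $z=i\lambda y$ quickly yields $u\equiv 0$ on $(\alpha,\beta)$, while $y$ must be a constant $c$ there; moreover $c=0$ unless the resonance $\lambda^2=k_1/\rho_2$ occurs. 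On each of the complementary intervals of $(0,L)\setminus(\alpha,\beta)$ the eigenvalue system reduces to a coupled linear ODE in $(u,y)$ with bounded measurable coefficients. Using the continuity of $u$ and $u_x$ (from $u\in H^2$) together with the $H^1$-regularity of the stress $k_2y_x+Dz_x$, which evaluates to $0$ on the interior side of each interface, furnishes matching Cauchy data $(u,u_x,y,y_x)=(0,0,c,0)$ at $x=\alpha$ and $x=\beta$; ODE uniqueness then propagates $u\equiv y\equiv 0$ throughout $(0,L)$, contradicting $U\neq 0$. The resonant case $\lambda^2=k_1/\rho_2$ has to be dispatched separately by feeding the candidate constant value of $y$ into the complementary ODEs and checking incompatibility with the boundary conditions \eqref{E--(1.3)} or \eqref{E--(1.4)}.

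To upgrade ``no eigenvalues'' to ``$i\mathbb{R}\subset\rho(\mathcal{A}_j)$'', I would argue by contradiction. If $i\lambda_0\in\sigma(\mathcal{A}_j)$, closedness of $\mathcal{A}_j$ combined with the uniform boundedness principle produces $V_n=(u^n,v^n,y^n,z^n)\in D(\mathcal{A}_j)$ with $\|V_n\|_{\mathcal{H}_j}=1$ and $F_n:=(i\lambda_0 I-\mathcal{A}_j)V_n\to 0$ in $\mathcal{H}_j$. The dissipation identity applied with source $F_n$ forces $\sqrt{D}\,z_x^n\to 0$ in $L^2(0,L)$. Since $\mathcal{A}_j V_n$ is bounded, $V_n$ is bounded in the graph norm, and the compact Sobolev embeddings of $H^2$ into $H^1$ and $H^1$ into $L^2$ allow one to extract a subsequence converging strongly in $\mathcal{H}_j$ to a nontrivial limit $V$ satisfying $(i\lambda_0 I-\mathcal{A}_j)V=0$, contradicting the previous step.

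The main obstacle lies in the propagation argument: the interface $\{\alpha,\beta\}$ is a transmission point where $D$ may jump, so continuity of $y_x$ alone is not available and one must rely on $k_2y_x+Dz_x\in H^1(0,L)$; the resonant frequency $\lambda^2=k_1/\rho_2$ also requires a dedicated treatment exploiting the specific boundary conditions, and it is precisely here that the distinction between the Dirichlet case $j=1$ and the mixed case $j=2$ must be made.
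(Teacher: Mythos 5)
Your first half coincides with the paper's own Lemma \ref{Lemma--2.7}: the dissipation identity gives $Dz_x=0$ on $(0,L)$ and $z_x=0$ on $(\alpha,\beta)$, whence $u\equiv0$ and $y\equiv c$ on $(\alpha,\beta)$, and ODE uniqueness with Cauchy data $(u,u_x,y,y_x)=(0,0,c,0)$ at $\alpha$ and $\beta$ propagates $u=y_x=0$ to all of $(0,L)$, after which $y(0)=0$ (case $j=1$) or the zero-mean constraint of $H^1_*$ (case $j=2$) removes the constant. You are actually more explicit than the paper on two points it passes over silently: the transmission of Cauchy data through the interface (note that since $Dz_x=0$ a.e.\ on all of $(0,L)$, the stress $k_2y_x+Dz_x\in H^1(0,L)$ reduces to $k_2y_x$, so $y_x$ itself is continuous here), and the resonant value $\lambda^2=k_1/\rho_2$ — though no separate dispatch is really needed: in the resonant case $(u,y)\equiv(0,c)$ solves the ODE system globally, uniqueness still yields $u=y_x=0$ on $(0,L)$, and the boundary/mean condition kills $c$ exactly as in the generic case. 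Where you genuinely diverge is the second Arendt--Batty hypothesis. The paper proves surjectivity of $i\lambda I-\mathcal{A}_j$ directly (Lemma \ref{Lemma--2.8}), recasting the resolvent equation as $\mathcal{U}-\lambda^2\mathcal{L}_j^{-1}\mathcal{U}=\mathcal{L}_j^{-1}F$ with $\mathcal{L}_j^{-1}$ compact and invoking the Fredholm alternative, with $\ker(I-\lambda^2\mathcal{L}_j^{-1})$ identified with $\ker(i\lambda I-\mathcal{A}_j)=\{0\}$, then the closed graph theorem; you instead run a contradiction through approximate eigenvectors (legitimate here, since the open right half-plane lies in $\rho(\mathcal{A}_j)$, so any imaginary spectral point is a boundary point of the spectrum and hence approximate). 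Both routes deliver the stronger conclusion $i\mathbb{R}\subset\rho(\mathcal{A}_j)$, which the paper needs again in Section \ref{Section--4}; the Fredholm route is constructive, while yours trades that for a compactness extraction.

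That compactness extraction is the one step that needs repair as written. You cannot conclude strong $\mathcal{H}_j$-convergence of the graph-norm-bounded sequence $V_n$ from ``compact Sobolev embeddings of $H^2$ into $H^1$'': the domain $D(\mathcal{A}_j)$ does \emph{not} embed into $H^2$ in the $y$-component — only the combination $k_2y_x+Dz_x$ lies in $H^1$, and $D$ is merely $L^\infty$ and possibly discontinuous at the interface, which is precisely the non-smoothness this paper is about. The $u$-, $v$-, $z$-components are unproblematic ($u^n$ is bounded in $H^2$ since $(u^n_x+y^n)_x$ and $y^n_x$ are bounded in $L^2$; $v^n$ and $z^n$ are bounded in $H^1$), but for the component $y^n_x$ of the energy norm you must decompose $k_2y^n_x=\left(k_2y^n_x+Dz^n_x\right)-Dz^n_x$: the first term is bounded in $H^1(0,L)$ by the graph norm, hence precompact in $L^2$, and the second tends to zero strongly because $\|Dz^n_x\|^2\leq\|D\|_{L^\infty}\int_0^L D\,|z^n_x|^2\,dx\to0$ — exactly the dissipation estimate you derived in the preceding sentence but did not deploy here. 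With that decomposition inserted, the limit $V$ has $\|V\|_{\mathcal{H}_j}=1$ and, by closedness of $\mathcal{A}_j$, satisfies $\mathcal{A}_jV=i\lambda_0V$, and your contradiction with the first part closes correctly.
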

%%%%%%%%%%%%%%%%%%%%%%%%%%%%%%%%%%%%%%%%%%%%%%%%%%
\noindent The argument for Theorem \ref{Theorem--2.6} relies on the subsequent lemmas. 
%%%%%%%%%%%%%%%%%%%%%%%%%%%%%%%%%%%%%%%%%%%%%%%%%%
                % Lemma %
%%%%%%%%%%%%%%%%%%%%%%%%%%%%%%%%%%%%%%%%%%%%%%%%%%
\begin{lem}\label{Lemma--2.7}
\rm{ Under hypothesis {\rm (H)}, for $j=1,2,$  one has
%%%%%%%%%%%%%%%%%%%%Equation%%%%%%%%%%%%%%%%%%%%%%
\begin{equation*}
\ker\left(i\lambda I-{\mathcal{A}_j}\right)=\{0\},\ \  \forall \lambda\in \mathbb{R}.
\end{equation*}
}\end{lem}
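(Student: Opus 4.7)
The plan is to fix $\lambda \in \mathbb{R}$ and $U = (u,v,y,z) \in D(\mathcal{A}_j)$ with $\mathcal{A}_j U = i\lambda U$, and deduce $U = 0$. The case $\lambda = 0$ is immediate: the proof of Proposition~\ref{Proposition--2.1} established that $0 \in \rho(\mathcal{A}_j)$, so $\ker \mathcal{A}_j = \{0\}$. For $\lambda \neq 0$ I would proceed in three steps.

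First, extract information from dissipativity. The identity $\Re\langle \mathcal{A}_j U, U\rangle_{\mathcal{H}_j} = \Re(i\lambda \|U\|_{\mathcal{H}_j}^2) = 0$, together with the formula $\Re\langle \mathcal{A}_j U, U\rangle_{\mathcal{H}_j} = -\int_0^L D(x)|z_x|^2\,dx$ from Proposition~\ref{Proposition--2.1}, forces $\int_0^L D(x)|z_x|^2\,dx = 0$. By hypothesis~(H), $D \geq D_0 > 0$ on $(\alpha,\beta)$, so $z_x = 0$ a.e.\ on $(\alpha,\beta)$; since $z = i\lambda y$ with $\lambda \neq 0$, also $y_x = 0$ there, and $y$ equals some constant $c$ on $(\alpha,\beta)$.

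Second, substitute into the component equations. They read $v = i\lambda u$, $z = i\lambda y$, $k_1(u_x+y)_x = -\rho_1\lambda^2 u$, and $(k_2 y_x + D z_x)_x - k_1(u_x+y) = -\rho_2\lambda^2 y$. On $(\alpha,\beta)$ the fourth equation collapses to $k_1(u_x + c) = \rho_2\lambda^2 c$, so $u_x$ is constant and $(u_x+y)_x = 0$; the third then yields $u \equiv 0$, hence $u_x = 0$ and $(\rho_2\lambda^2 - k_1)c = 0$. Assuming $\rho_2\lambda^2 \neq k_1$ this gives $u = y = 0$ on $(\alpha,\beta)$.

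Third, propagate this vanishing to all of $(0,L)$. The regularity built into $D(\mathcal{A}_j)$ makes $u \in H^2$ and $(k_2 + i\lambda D)y_x = k_2 y_x + D z_x \in H^1$, so both are continuous across $\beta$; since $|k_2 + i\lambda D| \geq k_2 > 0$, I obtain Cauchy data $u(\beta) = u_x(\beta) = y(\beta) = y_x(\beta^+) = 0$. Rewriting the component equations as a first-order linear system for $(u, u_x, y, (k_2+i\lambda D)y_x)$ with $L^\infty$ coefficients (bounded because $|k_2 + i\lambda D|^{-1} \leq 1/k_2$), Carath\'eodory uniqueness forces $u \equiv y \equiv 0$ on $(\beta, L)$, and the symmetric argument works on $(0,\alpha)$. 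The resonant case $\rho_2\lambda^2 = k_1$ must be handled separately: running the same ODE on $(\beta,L)$ from Cauchy data $u(\beta) = u_x(\beta) = y_x(\beta^+) = 0$, $y(\beta) = c$, and then imposing the boundary conditions at $L$ (Dirichlet for $\mathcal{H}_1$, $y_x(L) = 0$ for $\mathcal{H}_2$) should overdetermine $c$ and force $c = 0$. The main obstacle I expect is this last step: the unique continuation across subintervals where $D$ is merely $L^\infty$, together with ruling out the resonant frequencies $\lambda^2 = k_1/\rho_2$ using the far-end boundary conditions.
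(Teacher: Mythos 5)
Your overall strategy coincides with the paper's: dissipativity forces $\int_0^L D|z_x|^2\,dx=0$, hence $z_x=0$ on $(\alpha,\beta)$; the local analysis on $(\alpha,\beta)$ gives $u\equiv 0$, $y\equiv c$ and $(\rho_2\lambda^2-k_1)c=0$; and one then propagates to $(0,L)$ by uniqueness for the ODE system. Two remarks on the execution. First, a simplification you missed: since $D\geq 0$, the identity $\int_0^L D|z_x|^2\,dx=0$ gives $Dz_x=0$ almost everywhere on \emph{all} of $(0,L)$, not merely on $(\alpha,\beta)$; consequently $k_2y_x+Dz_x=k_2y_x$ a.e.\ and the eigenvalue system reduces on the whole interval to the constant-coefficient system $k_1u_{xx}+\rho_1\lambda^2u+k_1y_x=0$, $-k_1u_x+k_2y_{xx}+(\rho_2\lambda^2-k_1)y=0$ (this is \eqref{E--(2.11)}--\eqref{E--(2.12)} in the paper). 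Your Carath\'eodory/first-order formulation with the variable $(k_2+i\lambda D)y_x$ is not wrong, but it is unnecessary machinery: classical ODE uniqueness for the smooth system suffices.

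Second, and this is the genuine gap: your treatment of the resonant case $\rho_2\lambda^2=k_1$ for $j=2$ fails as stated. From the Cauchy data $(u,u_x,y,w)(\beta)=(0,0,c,0)$, the unique solution on $(\beta,L)$ is exactly the constant $(0,0,c,0)$ — check that $w'=k_1u_x+(k_1-\rho_2\lambda^2)y=0$ precisely in the resonant case — so $y\equiv c$, $y_x\equiv 0$ there, and the far-end condition $y_x(L)=0$ is satisfied for \emph{every} $c$: it does not overdetermine anything, and no contradiction appears. The constant $c$ for $j=2$ is killed not by a boundary condition but by the zero-mean constraint built into the space: running the same argument on $(0,\alpha)$ gives $y\equiv c$ on all of $(0,L)$, and $y\in H^1_*(0,L)$ means $\int_0^L y\,dx=cL=0$, whence $c=0$. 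This is exactly how the paper concludes: it never splits into resonant and non-resonant cases, but proves $u=y_x=0$ on $(0,L)$ in all cases (so $y$ is constant) and then invokes $y(0)=0$ for $j=1$ and $\int_0^L y\,dx=0$ for $j=2$. With that one repair (and optionally the constant-coefficient simplification), your proof is correct and essentially the paper's.
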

%%%%%%%%%%%%%%%%%%%%%%%%%%%%%%%%%%%%%%%%%%%%%%%%%%
                % Proof of  Lemma  %
%%%%%%%%%%%%%%%%%%%%%%%%%%%%%%%%%%%%%%%%%%%%%%%%%%
\begin{proof} For $j=1,2$, from  Proposition \ref{Proposition--2.1}, we deduce that $0\in\rho\left(\mathcal{A}_j\right)$.  We still need to show the result for 
$\lambda\in\mathbb{R^*}$. Suppose that  there exists a real number $\lambda\neq 0$  and
$ U=\left(u,v,y,z\right)\in D\left(\mathcal{A}_j\right)$ such that
%%%%%%%%%%%%%%%%%%%%Equation%%%%%%%%%%%%%%%%%%%%%%
\begin{equation*}
{\mathcal{A}_j} U=i\lambda U.
\end{equation*}
Equivalently, we have 
%%%%%%%%%%%%%%%%%%%%Equation%%%%%%%%%%%%%%%%%%%%%%
\begin{equation}\label{E--(2.8)}
\left\{
\begin{array}{ll}

\displaystyle{v} =\displaystyle{i\lambda u},\nline

\displaystyle{{k_1}(u_x+y)_x}=\displaystyle{i{\rho_1}\lambda v},\nline

\displaystyle{z}=\displaystyle{i\lambda y},\nline

\displaystyle{\left({k_2} y_{x}+Dz_x\right)_x-{k_1}(u_x+y)}=\displaystyle{i{\rho_2}\lambda z}.

\end{array}
\right.
\end{equation}
First, a straightforward computation gives 
%%%%%%%%%%%%%%%%%%%%Equation%%%%%%%%%%%%%%%%%%%%%%
\begin{equation*}
0=\Re\left<i\lambda U,U\right>_{{\mathcal{H}}_j}=\Re\left<{\mathcal{A}_j} U,U\right>_{{\mathcal{H}_j}}=-\int_{0}^LD(x)\left|z_{x}\right|^2 dx,
\end{equation*}
using hypothesis {\rm (H)},  we deduce that
%%%%%%%%%%%%%%%%%%%%Equation%%%%%%%%%%%%%%%%%%%%%%
\begin{equation}\label{E--(2.9)}
Dz_x=0 \quad \text{ over }\ (0,L) \ \ \ \text{and}\ \ \ z_x=0 \quad  \text{ over }\ (\alpha,\beta).
\end{equation}
Inserting \eqref{E--(2.9)} in \eqref{E--(2.8)}, we get
%%%%%%%%%%%%%%%%%%%%Equation%%%%%%%%%%%%%%%%%%%%%%
\begin{eqnarray}
u=y_x=0,\quad \text{over }\  (\alpha,\beta),\label{E--(2.10)}\nline
{k_1} u_{xx}+{\rho_1}\lambda^2 u+{k_1} y_x=0,\quad \text{over }\  (0,L),\label{E--(2.11)}\nline
-k_1 u_x +{k_2} y_{xx}+\left({\rho_2}\lambda^2-{k_1}\right) y=0,\ \text{over }\  (0,L),\label{E--(2.12)}
\end{eqnarray}
with the following boundary conditions
%%%%%%%%%%%%%%%%%%%%Equation%%%%%%%%%%%%%%%%%%%%%%
\begin{equation}\label{E--(2.13)}
u(0)=u(L)=y(0)=y(L)=0, \text{ if  }\ j=1\ \ \ \text{or}\ \ \ u(0)=u(L)=y_x(0)=y_x(L)=0, \text{ if  }j=2.
\end{equation}
In fact, System \eqref{E--(2.11)}-\eqref{E--(2.13)} admits a unique solution  $(u,y)\in C^2((0,L))$. From \eqref{E--(2.10)} and by the uniqueness of solutions, we get 
%%%%%%%%%%%%%%%%%%%%Equation%%%%%%%%%%%%%%%%%%%%%%
\begin{equation}\label{E--(2.14)}
u=y_x=0,\quad \text{over }\  (0,L).
\end{equation}
%%%%%%%%%%%%%%%%%%Enumerate%%%%%%%%%%%%%%%%%%%%%%%
\begin{enumerate}
%%%%%%%%%%%%%%%%%%%%%%item%%%%%%%%%%%%%%%%%%%%%%%%
\item[1.] If $j=1$, from \eqref{E--(2.14)} and the fact that $y(0)=0,$ we get
%%%%%%%%%%%%%%%%%%%%Equation%%%%%%%%%%%%%%%%%%%%%%
\begin{equation*}
u=y=0,\quad \text{over }\  (0,L),
\end{equation*}
hence,  $U=0$. In this case the proof is complete.
%%%%%%%%%%%%%%%%%%%%%%item%%%%%%%%%%%%%%%%%%%%%%%%
\item[2.] If $j=2$, from \eqref{E--(2.14)} and the fact that $y\in H^1_*(0,L)\, (i.e.,\ \int_0^L y dx=0),$ we get
%%%%%%%%%%%%%%%%%%%%Equation%%%%%%%%%%%%%%%%%%%%%%
\begin{equation*}
u=y=0,\quad \text{over }\  (0,L),
\end{equation*}
therefore,  $U=0$,  also in this case the proof is complete.
\end{enumerate}
 \end{proof}
%%%%%%%%%%%%%%%%%%%%%%%%%%%%%%%%%%%%%%%%%%%%%%%%%%
                % Lemma %
%%%%%%%%%%%%%%%%%%%%%%%%%%%%%%%%%%%%%%%%%%%%%%%%%%
\begin{lem}\label{Lemma--2.8}
\rm{Under hypothesis {\rm (H)}, for $j=1,2,$  for all $\lambda\in\mathbb{R}$, then $i\lambda I-\mathcal{A}_j$ is surjective.}
\end{lem}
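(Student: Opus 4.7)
The plan is to reduce the resolvent equation $(i\lambda I - \mathcal{A}_j) U = F$ to an elliptic variational problem on $\mathcal{V}_j(0,L)$ and then apply the Fredholm alternative, with Lemma \ref{Lemma--2.7} providing the kernel triviality needed to convert injectivity into existence. The case $\lambda = 0$ is already Proposition \ref{Proposition--2.1}, so I may restrict to $\lambda \neq 0$. Fix $F = (f_1, f_2, f_3, f_4) \in \mathcal{H}_j$ and seek $U = (u,v,y,z) \in D(\mathcal{A}_j)$. The first and third components of the resolvent equation give $v = i\lambda u - f_1$ and $z = i\lambda y - f_3$, and substituting reduces the problem to a coupled second-order elliptic system for $(u,y) \in \mathcal{V}_j(0,L)$. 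Multiplying by $(\varphi,\psi) \in \mathcal{V}_j(0,L)$ and integrating by parts produces the variational formulation
\begin{equation*}
a_\lambda\bigl((u,y),(\varphi,\psi)\bigr) - \lambda^2 m\bigl((u,y),(\varphi,\psi)\bigr) = L(\varphi,\psi), \qquad \forall\, (\varphi,\psi) \in \mathcal{V}_j(0,L),
\end{equation*}
where
\begin{equation*}
a_\lambda\bigl((u,y),(\varphi,\psi)\bigr) := \int_0^L k_1(u_x + y)\overline{(\varphi_x + \psi)}\,dx + \int_0^L (k_2 + i\lambda D)\, y_x\overline{\psi_x}\,dx,
\end{equation*}
the mass form is $m\bigl((u,y),(\varphi,\psi)\bigr) := \rho_1\int_0^L u\bar\varphi\,dx + \rho_2 \int_0^L y\bar\psi\,dx$, and $L$ is a continuous antilinear form built from $F$, the term $(D(f_3)_x)_x$ being moved onto $\overline{\psi_x}$ via integration by parts.

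The real part of $a_\lambda$ coincides with the coercive form already used in Proposition \ref{Proposition--2.1}, so by Lax-Milgram, $a_\lambda$ induces an isomorphism $\mathcal{S}_\lambda$ from the dual of $\mathcal{V}_j(0,L)$ onto $\mathcal{V}_j(0,L)$. By the Rellich compact embedding $\mathcal{V}_j(0,L) \hookrightarrow L^2(0,L)^2$, the operator $\mathcal{K}_\lambda (u,y) := \mathcal{S}_\lambda\bigl(m((u,y),\cdot)\bigr)$ is compact on $\mathcal{V}_j(0,L)$, and the variational equation reads $(I - \lambda^2 \mathcal{K}_\lambda)(u,y) = \mathcal{S}_\lambda L$. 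The Fredholm alternative then reduces existence for every right-hand side to the triviality of $\ker(I - \lambda^2 \mathcal{K}_\lambda)$; any non-zero element of this kernel, lifted back to $U = (u, i\lambda u, y, i\lambda y)$, would be a non-zero eigenvector of $\mathcal{A}_j$ at $i\lambda$, contradicting Lemma \ref{Lemma--2.7}. This yields a unique variational solution $(u,y) \in \mathcal{V}_j(0,L)$; standard elliptic regularity promotes $u$ to $H^2(0,L)$ and forces $(k_2 y_x + D z_x)_x \in L^2(0,L)$, while for $j = 2$ the natural boundary condition arising from testing with $\psi \in H^1_*(0,L)$ of non-zero boundary values delivers the Neumann condition on $y$. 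Setting $v$ and $z$ as above completes the construction of $U \in D(\mathcal{A}_j)$ with $(i\lambda I - \mathcal{A}_j) U = F$.

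The main technical obstacle I anticipate lies in the case $j = 2$: because $D \in L^\infty$ need not vanish at the endpoints nor be continuous there, the individual derivative $y_x$ may not admit a pointwise trace, and the natural boundary condition from the variational principle is expressed in terms of the combined flux $k_2 y_x + D z_x$ rather than $y_x$ itself. Extracting the prescribed condition $y_x(0) = y_x(L) = 0$ in $D(\mathcal{A}_2)$ from this flux condition requires either additional local regularity near the endpoints (for instance, arguing on a subinterval where $D \equiv 0$ when $\alpha > 0$ or $\beta < L$) or reinterpreting the boundary condition in the appropriate trace sense; the remainder of the argument is a direct extension of the proof of Proposition \ref{Proposition--2.1}.
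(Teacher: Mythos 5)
Your proposal follows essentially the same route as the paper's proof: substituting $v=i\lambda u-f_1$, $z=i\lambda y-f_3$, setting up a variational problem for $(u,y)\in\mathcal{V}_j(0,L)$ with coefficient $k_2+i\lambda D$, inverting by Lax--Milgram, using the compact embedding of $\mathcal{V}_j(0,L)$ into $\left(L^2(0,L)\right)^2$ to recast the equation as $I-\lambda^2(\text{compact})$, and concluding via the Fredholm alternative with kernel triviality supplied by Lemma \ref{Lemma--2.7}, then regularity. Your closing observation about recovering $y_x(0)=y_x(L)=0$ from the combined flux $k_2y_x+Dz_x$ when $j=2$ is a legitimate subtlety that the paper's own proof passes over silently under ``classical regularity arguments,'' so it strengthens rather than alters the argument.
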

%%%%%%%%%%%%%%%%%%%%%%%%%%%%%%%%%%%%%%%%%%%%%%%%%%
                % Proof of  Lemma  %
%%%%%%%%%%%%%%%%%%%%%%%%%%%%%%%%%%%%%%%%%%%%%%%%%%
\begin{proof}
Let $F=(f_1,f_2,f_3,f_4) \in \mathcal{H}_j$, we look for $U=(u,v,y,z) \in D(\mathcal{A}_j)$ solution of
%%%%%%%%%%%%%%%%%%%%Equation%%%%%%%%%%%%%%%%%%%%%%
			\begin{equation*}
			(i\lambda U-\mathcal{A}_j)U=F.
			\end{equation*}
			Equivalently, we have 
%%%%%%%%%%%%%%%%%%%%Equation%%%%%%%%%%%%%%%%%%%%%%	
			\begin{eqnarray}
			v=i\lambda u-f_1,\label{E--(2.15)}\\ \noalign{\medskip}
			z=i\lambda y-f_3,\label{E--(2.16)}\\ \noalign{\medskip}
			\lambda^2 u+\frac{k_1}{\rho_1}(u_x+y)_x=F_1,\label{E--(2.17)}\\ \noalign{\medskip}
			\lambda^2 y+{\rho_2}^{-1}\left[\left({k_2}+i\lambda D\right)y_{x}\right]_x-\frac{k_1}{\rho_2}(u_x+y)=F_2,\label{E--(2.18)}
						\end{eqnarray}		
	with the boundary conditions 
%%%%%%%%%%%%%%%%%%%%Equation%%%%%%%%%%%%%%%%%%%%%%	
\begin{equation}\label{E--(2.19)}
u(0)=u(L)=v(0)=v(L)=0\ \ \ \text{and}\ \ \ \left\{\begin{array}{ll}
 y(0)=y(L)=z(0)=z(L)=0,\quad \text{for }j=1,\nline
  y_x(0)=y_x(L)=0,\qquad \qquad \qquad\,\,\,  \text{for }j=2.
 \end{array}\right.
\end{equation}					
Such that	
%%%%%%%%%%%%%%%%%%%%Equation%%%%%%%%%%%%%%%%%%%%%%
	\begin{equation*}
	\left\{
	\begin{array}{lll}
	\displaystyle{F_1=-f_2-i\lambda f_1\in L^2(0,L)}	,\\  \noalign{\medskip}
	\displaystyle{F_2=-f_4-i\lambda f_3+{\rho_2}^{-1}\left(D\left(f_3\right)_x\right)_{x}\in H^{-1}(0,L)}.
	\end{array}
	\right.
\end{equation*}	
We define the operator    $\mathcal{L}_j $  by
%%%%%%%%%%%%%%%%%%%%Equation%%%%%%%%%%%%%%%%%%%%%%
\begin{equation*}
\mathcal{L}_j\mathcal{U}=\left(-\frac{k_1}{\rho_1}(u_x+y)_{x}, -
    \rho_2^{-1}\left[\left({k_2}+i\lambda D\right)y_{x}\right]_x+\frac{k_1}{\rho_2}(u_x+y) \right),\qquad \forall\    \mathcal{U}=(u,y)\in \mathcal{V}_j(0,L),
\end{equation*}
where
%%%%%%%%%%%%%%%%%%%%Equation%%%%%%%%%%%%%%%%%%%%%%
\begin{equation*}
\mathcal{V}_1(0,L)=H_0^1(0,L)\times H_0^1(0,L)\ \ \ \text{and}\ \ \ \mathcal{V}_2(0,L)=H_0^1(0,L)\times H_*^1(0,L).
\end{equation*}       
Using Lax-Milgram theorem, it is easy to show that $\mathcal{L}_j$ is an isomorphism from $\mathcal{V}_j(0,L)$ onto $(H^{-1}\left(0,L\right))^2$. Let $\mathcal{U}=\left(u,y\right)$ and  $F=\left(-F_1,-F_2\right)$, then we transform System \eqref{E--(2.17)}-\eqref{E--(2.18)} into the following form
%%%%%%%%%%%%%%%%%%%%Equation%%%%%%%%%%%%%%%%%%%%%%
\begin{equation}\label{E--(2.20)}
\mathcal{U}-\lambda^2\mathcal{L}^{-1}_j\mathcal{U}=\mathcal{L}^{-1}F.
\end{equation}
Using the compactness embeddings from $L^2(0,L)$ into $H^{-1}(0,L)$ and from $H^1_0(0,L)$ into $L^{2}(0,L)$, and from $H^1_L(0,L)$ into $L^{2}(0,L)$, we deduce that the operator $\mathcal{L}_j^{-1}$ is compact from $L^2(0,L)\times L^2(0,L)$ into $L^2(0,L)\times L^2(0,L)$. Consequently, by Fredholm alternative, proving the existence of $\mathcal{U}$ solution of \eqref{E--(2.20)} reduces to proving  
$\ker\left(I-\lambda^2\mathcal{L}^{-1}_j\right)=0$. Indeed, if $\left(\varphi,\psi\right)\in\ker(I-\lambda^2\mathcal{L}_j^{-1})$, then we have $\lambda^2 \left(\varphi,\psi\right)-  \mathcal{L}_j \left(\varphi,\psi\right)=0$. It follows that
%%%%%%%%%%%%%%%%%%%%Equation%%%%%%%%%%%%%%%%%%%%%% 
\begin{equation}\label{E--(2.21)}
		\left\{
		\begin{array}{ll}
		\displaystyle{\lambda^2 \varphi+\frac{k_1}{\rho_1}(\varphi_x+\psi)_x
			=0,}
		\\  \noalign{\medskip}
		\displaystyle{\lambda^2 \psi+{\rho_2}^{-1}\left[\left({k_2}+i\lambda D\right)\psi_{x}\right]_x-\frac{k_1}{\rho_2}(\varphi_x+\psi)=0,}
		
		\end{array}
		\right.
\end{equation}
with the following boundary conditions
%%%%%%%%%%%%%%%%%%%%Equation%%%%%%%%%%%%%%%%%%%%%%
\begin{equation}\label{E--(2.22)}
\varphi(0)=\varphi(L)=\psi(0)=\psi(L)=0, \text{ if  }j=1\ \ \ \text{or}\ \ \ \varphi(0)=\varphi(L)=\psi_x(0)=\psi_x(L)=0, \text{ if  }j=2.
\end{equation}
It is now easy to see that if $(\varphi,\psi)$ is a solution of System \eqref{E--(2.21)}-\eqref{E--(2.22)}, then the vector $V$ defined by
%%%%%%%%%%%%%%%%%%%%Equation%%%%%%%%%%%%%%%%%%%%%%
\begin{equation*}
V= \left(\varphi,i\lambda \varphi,\psi,i\lambda \psi\right)
\end{equation*}
belongs to $D(\mathcal{A}_j)$ and  $i\lambda V-\mathcal{A}_jV=0.$ Therefore,  ${V}\in\ker\left(i\lambda I-{\mathcal{A}_j}\right) $. Using   Lemma \ref{Lemma--2.7}, we get $V=0$, and so 
%%%%%%%%%%%%%%%%%%%%Equation%%%%%%%%%%%%%%%%%%%%%%
\begin{equation*}
\ker(I-\lambda^2\mathcal{L}^{-1}_j)=\{0\}.
\end{equation*}
  Thanks to Fredholm alternative, the Equation  \eqref{E--(2.20)} admits a unique solution $(u,v) \in \mathcal{V}_j(0,L)$. Thus, using \eqref{E--(2.15)}, \eqref{E--(2.17)} and a classical regularity arguments, we conclude that $\left(\ i\lambda -\mathcal{A}_j\right)U=F$ admits a unique solution $U\in D\left(\mathcal{A}_j\right)$. Thus, the proof is complete.
\end{proof}$\\[0.1in]$
%%%%%%%%%%%%%%%%%%%%%%%%%%%%%%%%%%%%%%%%%%%%%%%%%%
We are now in a position to conclude the proof of Theorem \ref{Theorem--2.6}.\\[0.1in]
%%%%%%%%%%%%%%%%%%%%%%%%%%%%%%%%%%%%%%%%%%%%%%%%%%
                % Proof of  Theorem  %
%%%%%%%%%%%%%%%%%%%%%%%%%%%%%%%%%%%%%%%%%%%%%%%%%%
\noindent \textbf{Proof of Theorem \ref{Theorem--2.6}.} Using Lemma \ref{Lemma--2.7}, we directly deduce that ${\mathcal{A}}_j$ ha non pure imaginary eigenvalues. According to Lemmas \ref{Lemma--2.7}, \ref{Lemma--2.8} and with the help of the closed graph theorem of Banach, we deduce that $\sigma({\mathcal{A}}_j)\cap i\mathbb{R}=\{\emptyset\}$.  Thus, we get the conclusion by applying Theorem \ref{Theorem--2.4} of Arendt and Batty. \xqed{$\square$}
%%%%%%%%%%%%%%%%%%%%%%%%%%%%%%%%%%%%%%%%%%%%%%%%%%
%%%%%%%%%%%%%%%%%%%%%%%%%%%%%%%%%%%%%%%%%%%%%%%%%%
%%%%%%%%%%%%%%%%%%%%%%%%%%%%%%%%%%%%%%%%%%%%%%%%%%
 % Section 3: Lack of exponential  stability %
%%%%%%%%%%%%%%%%%%%%%%%%%%%%%%%%%%%%%%%%%%%%%%%%%%
%%%%%%%%%%%%%%%%%%%%%%%%%%%%%%%%%%%%%%%%%%%%%%%%%%	
%%%%%%%%%%%%%%%%%%%%%%%%%%%%%%%%%%%%%%%%%%%%%%%%%%
\section{Lack of exponential  stability of $\mathcal{A}_2$}\label{Section--3}
\noindent In this section, our goal is to show that the  Timoshenko System  \eqref{E--(1.1)}-\eqref{E--(1.2)} with Dirichlet-Neumann boundary conditions \eqref{E--(1.4)}  is not exponentially stable. 
%%%%%%%%%%%%%%%%%%%%%%%%%%%%%%%%%%%%%%%%%%%%%%%%%%
%%%%%%%%%%%%%%%%%%%%%%%%%%%%%%%%%%%%%%%%%%%%%%%%%%
 % Section 3.1: Lack of exponential  stability %
%%%%%%%%%%%%%%%%%%%%%%%%%%%%%%%%%%%%%%%%%%%%%%%%%%
%%%%%%%%%%%%%%%%%%%%%%%%%%%%%%%%%%%%%%%%%%%%%%%%%%	
\subsection{Lack of exponential  stability of $\mathcal{A}_2$ with  global Kelvin–Voigt damping}\label{Section--3.1}
In this part, assume that 
%%%%%%%%%%%%%%%%%%%%Equation%%%%%%%%%%%%%%%%%%%%%% 
\begin{equation}\label{E--(3.1)}
D(x)=D_0>0,\ \forall x\in (0,L),
\end{equation}
where $D_0\in \mathbb{R}^+_{*}$. We prove the following theorem.
%%%%%%%%%%%%%%%%%%%%%%%%%%%%%%%%%%%%%%%%%%%%%%%%%%
                % Theorem %
%%%%%%%%%%%%%%%%%%%%%%%%%%%%%%%%%%%%%%%%%%%%%%%%%%	
\begin{thm}\label{Theorem--3.1}
\rm{Under hypothesis \eqref{E--(3.1)}, for $\epsilon>0\left(\text{small enough}\right)$,  we cannot expect the energy decay rate $t^{-\frac{2}{{2-\epsilon}}}$ for all initial data $U_0\in D\left(\mathcal{A}_2\right)$ and for all $t>0.$  } 
\end{thm}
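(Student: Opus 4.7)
My plan is to argue by contradiction via Theorem~\ref{Theorem--2.5}(i). If the energy decayed at rate $t^{-2/(2-\epsilon)}$, i.e.\ $\|e^{t\mathcal{A}_2}U_0\|_{\mathcal{H}_2}\le Ct^{-1/(2-\epsilon)}\|U_0\|_{D(\mathcal{A}_2)}$ for every $U_0\in D(\mathcal{A}_2)$, then Theorem~\ref{Theorem--2.5}(i) would force $\|(i\lambda I-\mathcal{A}_2)^{-1}\|_{\mathcal{L}(\mathcal{H}_2)}=O(|\lambda|^{2-\epsilon})$ as $|\lambda|\to\infty$. I will exhibit a sequence $(\gamma_n)\subset \mathbb{R}$ with $\gamma_n\to+\infty$ along which the resolvent norm already grows like $\gamma_n^2$, thereby defeating this bound for every $\epsilon>0$.

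Because $D\equiv D_0$ is constant, full separation of variables is available. Setting $\mu_n=n\pi/L$ and using the shape functions $u_n(x)=a_n\sin(\mu_n x)$ and $y_n(x)=b_n\cos(\mu_n x)$, one has $u_n\in H^1_0(0,L)$, $(y_n)_x(0)=(y_n)_x(L)=0$, and $\int_0^L y_n\,dx=0$ for $n\ge 1$, so $U_n:=(u_n,\lambda u_n,y_n,\lambda y_n)\in D(\mathcal{A}_2)$. Searching for eigenpairs of $\mathcal{A}_2$ of this form reduces to the $2\times 2$ linear system
\begin{equation*}
(\rho_1\lambda^2+k_1\mu_n^2)\,a_n+k_1\mu_n b_n=0,\qquad k_1\mu_n a_n+\bigl[\rho_2\lambda^2+(k_2+\lambda D_0)\mu_n^2+k_1\bigr]b_n=0,
\end{equation*}
whose compatibility condition is the polynomial characteristic equation
\begin{equation*}
P_n(\lambda):=(\rho_1\lambda^2+k_1\mu_n^2)\bigl[\rho_2\lambda^2+(k_2+\lambda D_0)\mu_n^2+k_1\bigr]-k_1^2\mu_n^2=0.
\end{equation*}

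The crucial step, and the one I expect to be the main technical obstacle, is a careful asymptotic analysis of $P_n$ near the undamped frequency $ic_1\mu_n$, where $c_1:=\sqrt{k_1/\rho_1}$. Writing $\lambda=ic_1\mu_n+\eta$ and using the identity $\rho_1 c_1^2=k_1$, the first factor collapses to $2i\rho_1 c_1\mu_n\eta+\rho_1\eta^2$, while the second bracket equals $ic_1 D_0\mu_n^3+O(\mu_n^2)$. Balancing the dominant terms in $P_n(\lambda)=0$ formally yields
\begin{equation*}
\eta=-\frac{k_1}{2D_0\,\mu_n^2}+o(\mu_n^{-2}).
\end{equation*}
To promote this formal computation to a rigorous statement I would apply Rouch\'e's theorem to $P_n$ on a small disk centered at $ic_1\mu_n-k_1/(2D_0\mu_n^2)$ of radius $O(\mu_n^{-3})$, comparing $P_n$ with its leading-order approximation, and thereby extract a genuine root $\lambda_n$ of $P_n$ with $\Re(\lambda_n)<0$ and $|\Re(\lambda_n)|\sim \mu_n^{-2}$ for all large $n$.

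Once such $\lambda_n$ is produced, $U_n=(u_n,\lambda_n u_n,y_n,\lambda_n y_n)$ is an honest eigenvector of $\mathcal{A}_2$. Setting $\gamma_n:=\Im(\lambda_n)\sim c_1\mu_n$ gives $(i\gamma_n I-\mathcal{A}_2)U_n=-\Re(\lambda_n)\,U_n$, whence
\begin{equation*}
\|(i\gamma_n I-\mathcal{A}_2)^{-1}\|_{\mathcal{L}(\mathcal{H}_2)}\ge \frac{\|U_n\|_{\mathcal{H}_2}}{|\Re(\lambda_n)|\,\|U_n\|_{\mathcal{H}_2}}=\frac{1}{|\Re(\lambda_n)|}\gtrsim \mu_n^2\sim \gamma_n^2.
\end{equation*}
For every $\epsilon>0$ this forces $\gamma_n^{-(2-\epsilon)}\|(i\gamma_n I-\mathcal{A}_2)^{-1}\|\to+\infty$, contradicting the resolvent estimate required by polynomial stability at rate $t^{-1/(2-\epsilon)}$ and completing the proof.
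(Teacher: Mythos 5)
Your proposal is correct, but it reaches the resolvent blow-up by a genuinely different mechanism than the paper. The paper never touches the spectrum: it fixes the \emph{real} frequencies $\lambda_n=\frac{n\pi}{L}\sqrt{k_1/\rho_1}$ and exhibits an explicit quasimode, i.e.\ the coefficients $A_n,B_n$ in \eqref{E--(3.2)} chosen so that $(i\lambda_n I-\mathcal{A}_2)U_n=F_n$ holds \emph{exactly} with $F_n=\left(0,\sin\left(\frac{n\pi x}{L}\right),0,0\right)$ bounded, verifying by pure algebra ($C_{1,n}=1$, $C_{2,n}=0$ in \eqref{E--(3.3)}) that $\left\|U_n\right\|_{\mathcal{H}_2}\sim\lambda_n^2$; no root-finding or perturbation argument is needed. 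You instead use the invariance of the four-dimensional subspace spanned by vectors of the form $\left(a\sin(\mu_n x),c\sin(\mu_n x),b\cos(\mu_n x),d\cos(\mu_n x)\right)$ to locate \emph{genuine} eigenvalues through the quartic $P_n$, obtain $\Re(\lambda_n)\sim -k_1/(2D_0\mu_n^2)$, and convert this into the lower bound $\left\|\left(i\gamma_n I-\mathcal{A}_2\right)^{-1}\right\|_{\mathcal{L}(\mathcal{H}_2)}\geq \left|\Re(\lambda_n)\right|^{-1}\gtrsim\gamma_n^2$ at $\gamma_n=\Im(\lambda_n)$; your $2\times 2$ reduction, the leading-order balance (via $\rho_1c_1^2=k_1$), the verification $U_n\in D(\mathcal{A}_2)$, and the final contradiction with Theorem \ref{Theorem--2.5}(i) for $\ell=2-\epsilon$ (legitimate since $i\mathbb{R}\subset\rho(\mathcal{A}_2)$ by Section \ref{Section--2}) are all sound. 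The one step you leave schematic, the Rouch\'e comparison, needs a modicum of care: on the circle $|\eta-\eta_0|=C\mu_n^{-3}$ the linearized term has size $2k_1D_0C\mu_n$, while the discarded cross term $2i\rho_1c_1\mu_n\eta\,(k_2-\rho_2c_1^2)\mu_n^2$ is of the \emph{same} order $\mu_n$, so the radius constant must satisfy $C>\rho_1c_1\left|k_2-\rho_2c_1^2\right|/(2D_0^2)$ (plus lower-order slack); with that proviso the argument closes. As for what each route buys: the paper's quasimode computation is shorter and completely explicit, whereas yours is slightly heavier but yields true spectral asymptotics for the globally damped operator --- precisely the kind of eigenvalue-branch information the paper itself extracts (in the locally damped case) in Proposition \ref{Proposition--3.3} and later feeds into the optimality criterion of Theorem \ref{Theorem-4.9}.
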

%%%%%%%%%%%%%%%%%%%%%%%%%%%%%%%%%%%%%%%%%%%%%%%%%%
                % Proof of  Theorem  %
%%%%%%%%%%%%%%%%%%%%%%%%%%%%%%%%%%%%%%%%%%%%%%%%%%
\begin{proof} Following to  Borichev \cite{Borichev01} (see  Theorem \ref{Theorem--2.4} part (i)), it suffices to show the existence of sequences
$\left(\lambda_n\right)_n\subset\mathbb{R}$ with $\lambda_n\to+\infty$,
$\left(U_n\right)_n\subset D\left(\mathcal{A}_2\right)$, and $\left(F_n\right)_n\subset \mathcal{H}_2$
 such that $\left(i\lambda_n I-\mathcal{A}_2\right)U_n=F_n$ is bounded in $\mathcal{H}_2$ and $\lambda_n^{-2+\epsilon}\| U_n\|\to+\infty$. Set
%%%%%%%%%%%%%%%%%%%%Equation%%%%%%%%%%%%%%%%%%%%%% 
 \begin{equation*}
 F_n =\left(0,\sin\left(\frac{n\pi x}{L}\right),0,0\right),\ 
 U_n=\left(A_n\sin\left(\frac{n\pi x}{L}\right),i\lambda_n
 A_n\sin\left(\frac{n\pi x}{L}\right),B_n\cos\left(\frac{n\pi x}{L}\right),i\lambda_n B_n\cos\left(\frac{n\pi x}{L}\right)\right)
 \end{equation*}
and
%%%%%%%%%%%%%%%%%%%%Equation%%%%%%%%%%%%%%%%%%%%%%
\begin{equation}\label{E--(3.2)}
\lambda_n=\frac{n\pi}{L}\sqrt{\frac{k_1}{\rho_1}} ,\quad A_n=-\frac{in\pi D_0}{k_1\, L}\sqrt{\frac{\rho_1}{k_1}}+\frac{k_2}{k_1}\left(\frac{\rho_2}{k_2}-\frac{\rho_1}{k_1}\right)-\frac{\rho_1 L^2}{k_1 \pi^2 n^2},\quad B_n=\frac{\rho_1 L}{k_1 n\pi}.
\end{equation}
Clearly that  $U_n\in D\left(\mathcal{A}_2\right),$ and $F_n$ is bounded in $\mathcal{H}_2$. Let us show that
%%%%%%%%%%%%%%%%%%%%Equation%%%%%%%%%%%%%%%%%%%%%%
\begin{equation*}
\left(i\lambda_n I-\mathcal{A}_2\right)U_n=F_n.
\end{equation*}
Detailing $\left(i\lambda_n I-\mathcal{A}_2\right)U_n$, we get
%%%%%%%%%%%%%%%%%%%%Equation%%%%%%%%%%%%%%%%%%%%%%
\begin{equation*}
\left(i\lambda_n I-\mathcal{A}_2\right)U_n=\left(0,C_{1,n}\sin\left(\frac{n\pi x}{L}\right),0,C_{2,n} \cos\left(\frac{n\pi x}{L}\right)\right),
\end{equation*}
where
%%%%%%%%%%%%%%%%%%%%Equation%%%%%%%%%%%%%%%%%%%%%% 
\begin{equation}\label{E--(3.3)}
C_{1,n}=\left(\frac{k_1}{\rho_1} \left(\frac{n\pi}{L}\right)^2-\lambda^2_n \right) A_n+\frac{k_1 n\pi}{\rho_1 L}B_n,\ C_{2,n}=\frac{n\pi k_1}{\rho_2 L} A_n+\left(-\lambda^2_n+\frac{k_1}{\rho_2}+\frac{{k_2}+i \lambda_n D_0 }{\rho_2} \left(\frac{n\pi}{L}\right)^2\right)B_n.
\end{equation}
Inserting \eqref{E--(3.2)} in \eqref{E--(3.3)}, we get
%%%%%%%%%%%%%%%%%%%%Equation%%%%%%%%%%%%%%%%%%%%%% 
\begin{equation*}
C_{1,n}=1\ \ \ \text{and}\ \ \ C_{2,n}=0,
\end{equation*}
hence we obtain 
%%%%%%%%%%%%%%%%%%%%Equation%%%%%%%%%%%%%%%%%%%%%%
\begin{equation*}
\left(i\lambda_n I-\mathcal{A}_2\right)U_n=\left(0,\sin\left(\frac{n\pi x}{L}\right),0,0\right)=F_n.
\end{equation*}
Now, we have
%%%%%%%%%%%%%%%%%%%%Equation%%%%%%%%%%%%%%%%%%%%%%
\begin{equation*}
\left\|U_n\right\|^2_{\mathcal{H}_2}\geq \rho_1\int^L_0\left|i\lambda_n
 A_n\sin\left(\frac{n\pi x}{L}\right)\right|^2dx=\frac{\rho_1\,  L\, \lambda_n^2 }{2}\left|A_n\right|^2\sim \lambda^4_n.
\end{equation*} 
Therefore, for $\epsilon>0\left(\text{small enough}\right)$, we have 
%%%%%%%%%%%%%%%%%%%%Equation%%%%%%%%%%%%%%%%%%%%%%
\begin{equation*}
\lambda_n^{-2+\epsilon}\left\|U_n\right\|_{\mathcal{H}_2}\sim\lambda_n^\epsilon\to +\infty.
\end{equation*} 
Finally, following to  Borichev \cite{Borichev01} (see  Theorem \ref{Theorem--2.4} part (i)) we cannot expect the energy decay rate $t^{-\frac{2}{{2-\epsilon}}}$. 
\end{proof}$\\[0.1in]$
%%%%%%%%%%%%%%%%%%%%%%%%%%%%%%%%%%%%%%%%%%%%%%%%%%
\noindent Note that Theorem \ref{Theorem--3.1} also implies that our system is non-uniformly stable.
%%%%%%%%%%%%%%%%%%%%%%%%%%%%%%%%%%%%%%%%%%%%%%%%%%
%%%%%%%%%%%%%%%%%%%%%%%%%%%%%%%%%%%%%%%%%%%%%%%%%%
 % Section 3.2: Lack of exponential  stability %
%%%%%%%%%%%%%%%%%%%%%%%%%%%%%%%%%%%%%%%%%%%%%%%%%%
%%%%%%%%%%%%%%%%%%%%%%%%%%%%%%%%%%%%%%%%%%%%%%%%%%	
\subsection{Lack of exponential  stability of $\mathcal{A}_2$ with  local Kelvin–Voigt damping}\label{Section--3.2}
In this part,  under the equal speed wave propagation condition (i.e., $\frac{\rho_1}{k_1}=\frac{\rho_2}{k_2}$), we use the classical method developed by Littman and Markus in \cite{Littman1988} (see also \cite{CurtainZwart01}), to show that the Timoshenko System \eqref{E--(1.1)}-\eqref{E--(1.2)} with  local Kelvin–Voigt damping, and with Dirichlet-Neumann boundary conditions \eqref{E--(1.4)}  is not exponentially stable. For this aim, assume that
%%%%%%%%%%%%%%%%%%%%Equation%%%%%%%%%%%%%%%%%%%%%%
\begin{equation}\label{E--(3.4)}
\frac{\rho_1}{k_1}=\frac{\rho_2}{k_2}\ \  \text{and}\ \ \ D(x)=\left\{\begin{array}{ll}
0,& 0<x\leq \alpha,\nline 
D_0& \alpha<x\leq L,
\end{array}\right.
\end{equation}
where $D_0\in \mathbb{R}^+_{*}$ and $\alpha \in (0,L)$. For simplicity and without loss of generality, in this part, we  take $\frac{\rho_1}{k_1}=1$, $D_0=k_2$, $L=1$, and $\alpha=\frac{1}{2}$,  then hypothesis \eqref{E--(3.4)} becomes
%%%%%%%%%%%%%%%%%%%%Equation%%%%%%%%%%%%%%%%%%%%%%
\begin{equation}\label{E--(3.5)}
\frac{\rho_1}{k_1}=\frac{\rho_2}{k_2}=1\ \  \text{and}\ \ \ D(x)=\left\{\begin{array}{ll}
0,& 0<x\leq \frac{1}{2},\nline 
k_2& \frac{1}{2}<x\leq 1.
\end{array}\right.
\end{equation} 
Our main result in this part is following theorem.
%%%%%%%%%%%%%%%%%%%%%%%%%%%%%%%%%%%%%%%%%%%%%%%%%%
                % Theorem %
%%%%%%%%%%%%%%%%%%%%%%%%%%%%%%%%%%%%%%%%%%%%%%%%%%
\begin{thm}\label{Theorem--3.2}
\rm{Under hypothesis \eqref{E--(3.5)}. The semigroup generated by the operator $\mathcal{A}_2$ is not exponentially stable in the energy space $\mathcal{H}_2.$}
\end{thm}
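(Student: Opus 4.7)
We apply Theorem~\ref{Theorem--2.5}(ii): to disprove exponential stability it suffices to exhibit a real sequence $\lambda_n\to+\infty$ together with $F_n\in\mathcal{H}_2$ bounded and $U_n=(i\lambda_n I-\mathcal{A}_2)^{-1}F_n\in D(\mathcal{A}_2)$ satisfying $\|U_n\|_{\mathcal{H}_2}\to\infty$. Following the method of Littman--Markus (cf.~\cite{Littman1988}), the candidate $U_n$ is constructed piecewise on the undamped interval $I_1=(0,1/2)$ and the damped interval $I_2=(1/2,1)$, and then glued at $x=1/2$; the source $F_n$ will be chosen as a suitable sinusoid concentrated on $I_1$.

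\textbf{Step 1 (reduction to ODE).} Setting $v=i\lambda u-f_1$ and $z=i\lambda y-f_3$, the resolvent equation $(i\lambda I-\mathcal{A}_2)U=F$ rewrites as a coupled second-order boundary-value problem
\[
\lambda^2 u+\tfrac{k_1}{\rho_1}(u_x+y)_x=g_1,\qquad \lambda^2 y+\tfrac{1}{\rho_2}\bigl((k_2+i\lambda D(x))y_x\bigr)_x-\tfrac{k_1}{\rho_2}(u_x+y)=g_2,
\]
with $u(0)=u(1)=0$ and $y_x(0)=y_x(1)=0$. Since $D$ is piecewise constant, this is an ODE with constant coefficients on each of $I_1,I_2$, and the problem decouples into two linear ODE systems linked by four transmission conditions at $x=1/2$ (continuity of $u$, $y$, of the shear $k_1(u_x+y)$, and of the bending moment $(k_2+i\lambda D)y_x$).

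\textbf{Step 2 (mode analysis).} On $I_1$ the characteristic equation has four roots; thanks to the equal-speed condition $k_1/\rho_1=k_2/\rho_2=1$ in \eqref{E--(3.5)} these behave at leading order as $\pm i\lambda$, so the general solution is a combination of four quasi-plane-waves $e^{\pm i\mu_\pm^{(1)}(\lambda)x}$ with $\mu_\pm^{(1)}(\lambda)=\lambda+O(\lambda^{-1})$. On $I_2$ the effective coefficient $k_2+i\lambda k_2$ in the rotation equation yields four roots that split into two ``hyperbolic'' ones of size $\lambda$ (tied to the $u$--equation) and two ``parabolic'' ones of size $\sqrt{\lambda}$ with strictly positive real part, producing exponentially localized boundary layers at $x=1/2$ and $x=1$.

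\textbf{Step 3 (matching and blow-up).} Imposing the four boundary conditions at $x=0,1$ and the four transmission conditions at $x=1/2$ produces an $8\times 8$ linear system in the eight modal coefficients, with determinant $\Delta(\lambda)$. Because the oscillatory (``hyperbolic'') modes on both sides propagate at the same leading-order speed, $\Delta(\lambda)$ turns out to be an almost-periodic function of $\lambda$ whose modulus does not stay bounded away from $0$; thus one can extract a real sequence $\lambda_n\to+\infty$ along which $|\Delta(\lambda_n)|\to 0$. Following the explicit ansatz of the proof of Theorem~\ref{Theorem--3.1}, one picks $F_n=(0,\sin(n\pi\,\cdot),0,0)$ (or a minor modification thereof) and constructs $U_n$ whose energy is dominated by a standing wave on $I_1$; matching across $x=1/2$ gives amplitude coefficients $A_n,B_n$ of size $|\Delta(\lambda_n)|^{-1}$, so that $\|U_n\|_{\mathcal{H}_2}\to\infty$ while $\|F_n\|_{\mathcal{H}_2}$ stays $O(1)$. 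Theorem~\ref{Theorem--2.5}(ii) then yields the non-exponential-stability of $e^{t\mathcal{A}_2}$.

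\textbf{Main obstacle.} The crux is Step~3, namely producing the real sequence $(\lambda_n)$ along which the transmission determinant $\Delta(\lambda)$ tends to zero. This is precisely where hypothesis \eqref{E--(3.5)} enters decisively: equal wave speeds force the principal-order dispersion relations on $I_1$ and $I_2$ to agree, so that high-frequency oscillations on the undamped side can resonate through the interface and escape any uniform dissipation; without the equal-speed condition the determinant stays bounded below at infinity, and the heuristic of the construction breaks down. Once such $\lambda_n$ are in hand, the remaining growth estimate $\|U_n\|_{\mathcal{H}_2}\to\infty$ reduces to a routine energy computation on the undamped side.
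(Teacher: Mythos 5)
Your Steps 1 and 2 match the paper's setup: the paper likewise follows Littman--Markus, reduces the eigenvalue/resolvent equation to constant-coefficient ODEs on $(0,1/2)$ and $(1/2,1)$ with four transmission conditions at the interface, and identifies the same mode structure (oscillatory roots $r_1,r_2,s_1$ of size $\lambda$ and a parabolic root $s_2$ of size $\sqrt{\lambda}$ producing a boundary layer); the only cosmetic difference is that the paper solves the boundary conditions at $x=0,1$ explicitly first, so its determinant is $4\times 4$ rather than $8\times 8$. The genuine gap is your Step 3, which is where the entire content of the theorem lives and where you offer only heuristics. First, ``$\Delta(\lambda)$ is almost-periodic, hence its modulus is not bounded away from $0$ on the real axis'' is a non sequitur: an almost-periodic function can perfectly well satisfy $\inf_{\lambda\in\mathbb{R}}|\Delta(\lambda)|>0$. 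The actual mechanism, which must be proved, is that the characteristic function admits an expansion $\det(\tilde{M})=f_0(\lambda)+f_1(\lambda)\lambda^{-1/2}+O(\lambda^{-1})$ whose leading part $f_0(\lambda)=2\sinh\left(\frac{\lambda}{2}\right)\left(\cosh(\lambda)+\cos^2\left(\frac{c}{4}\right)\right)$ has purely imaginary zeros; a Rouch\'e argument then yields two branches of true eigenvalues $\lambda_{k,n}$ within $o(1)$ of $i\mathbb{R}$, and a second-order computation shows $\Re(\lambda_{k,n})\to 0$ at rate $|n|^{-1/2}$ (or $n^{-2}$ in the degenerate cases $\cos(c/4)=0$ or $\sin(c/4)=0$). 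This asymptotic bookkeeping --- including the verification that the boundary-layer contribution $e^{-s_2}$ is $o(\lambda^{-3})$ and hence harmless --- is exactly the paper's Lemma~\ref{Lemma--3.4}, Lemma~\ref{Lemma--3.5} and Proposition~\ref{Proposition--3.3}, and nothing in your proposal substitutes for it.

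Two further steps of your Step 3 would fail as written. The ansatz $F_n=(0,\sin(n\pi x),0,0)$ imported from Theorem~\ref{Theorem--3.1} works there only because $D\equiv D_0$ is global, so a single sinusoid solves the resolvent equation exactly on all of $(0,L)$ and is compatible with the boundary conditions; with $D$ piecewise constant as in \eqref{E--(3.5)} no sinusoid solves the transmission problem, and your claim that the modal amplitudes come out of size $|\Delta(\lambda_n)|^{-1}$ presupposes a lower bound on the relevant cofactors tested against the data, which you never address. Moreover, once one has eigenvalues with $\Re(\lambda_{k,n})\to 0$, the quasimode construction is unnecessary: the paper concludes directly from $s\left(\mathcal{A}_2\right)\leq \omega_0\left(\mathcal{A}_2\right)\leq 0$ that equality $\omega_0(\mathcal{A}_2)=0$ holds, and equivalently one gets $\left\|\left(i\Im(\lambda_{k,n})I-\mathcal{A}_2\right)^{-1}\right\|_{\mathcal{L}(\mathcal{H}_2)}\geq \left|\Re(\lambda_{k,n})\right|^{-1}\to\infty$, which is your Theorem~\ref{Theorem--2.5}(ii) conclusion without any choice of $F_n$. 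Finally, your parenthetical claim that without equal speeds ``the determinant stays bounded below at infinity'' is likewise unproved and is not asserted by the paper, which simply restricts the local-damping analysis to the equal-speed case.
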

%%%%%%%%%%%%%%%%%%%%%%%%%%%%%%%%%%%%%%%%%%%%%%%%%%
\noindent For the proof of Theorem  \ref{Theorem--3.2}, we recall the following definitions: the growth bound 
$\omega_0\left(\mathcal{A}_2\right)$ and the 
the spectral bound $s\left(\mathcal{A}_2\right)$ of $\mathcal{A}_2$
are defined respectively as 
%%%%%%%%%%%%%%%%%%%%Equation%%%%%%%%%%%%%%%%%%%%%%
\begin{equation*}
\omega(\mathcal{A}_2)=\lim_{t\to \infty}\frac{\log \left\|e^{t\mathcal{A}_2}\right\|_{\mathcal{L}(\mathcal{H}_2)}}{t}\ \ \ \text{and}\ \ \ s\left(\mathcal{A}_2\right)=\sup\left\{\Re\left(\lambda\right):\ \lambda\in \sigma\left(\mathcal{A}_2\right) \right\}.
\end{equation*}
From the Hille-Yoside theorem (see also Theorem 2.1.6 and Lemma 2.1.11 in \cite{CurtainZwart01}), one has that
%%%%%%%%%%%%%%%%%%%%Equation%%%%%%%%%%%%%%%%%%%%%%
\begin{equation*}
s\left(\mathcal{A}_2\right)\leq \omega_0\left(\mathcal{A}_2\right).
\end{equation*} 
By the previous results, one clearly has that $s\left(\mathcal{A}_2\right)\leq 0$ and the theorem would follow if equality holds in the previous inequality. It therefore amounts to 
show the existence of a sequence of eigenvalues of $\mathcal{A}_2$ whose real parts tend to zero.\\[0.1in]
%%%%%%%%%%%%%%%%%%%%%%%%%%%%%%%%%%%%%%%%%%%%%%%%%%
Since $\mathcal{A}_2$ is dissipative, we fix $\alpha_0>0$ small enough and we study the asymptotic behavior of the eigenvalues $\lambda$ of 
$\mathcal{A}_2$ in the strip 
%%%%%%%%%%%%%%%%%%%%Equation%%%%%%%%%%%%%%%%%%%%%%
\begin{equation*}
S=\left\{\lambda \in \mathbb{C}:-\alpha_0\leq \Re(\lambda)\leq 0\right\}.
\end{equation*}
First, we determine the characteristic equation satisfied by the eigenvalues of $\mathcal{A}_2$. For this aim, let $\lambda\in\mathbb{C}^*$ be an eigenvalue of $\mathcal{A}_2$ and let $U=\left(u,\lambda u,y,\lambda y,\omega\right)\in D(\mathcal{A}_2)$  be an associated eigenvector. Then  the  eigenvalue problem is given by
%%%%%%%%%%%%%%%%%%%%Equation%%%%%%%%%%%%%%%%%%%%%%
\begin{eqnarray}
\lambda^2u-u_{xx}-y_x=0,& x\in (0,1),\label{E--(3.6)}\\ \noalign{\medskip}
c^2u_x+ \left(\lambda^2+c^2\right) y-\left(1+\frac{D}{k_2}\lambda\right) y_{xx}=0,& x\in (0,1),\label{E--(3.7)}
\end{eqnarray}
with the boundary conditions
%%%%%%%%%%%%%%%%%%%%Equation%%%%%%%%%%%%%%%%%%%%%%
\begin{equation}\label{E--(3.8)}
u(0)=y_x(0)=u(1)=y_x(1)=0,
\end{equation}
where $c=\sqrt{k_1 k_2^{-1}}$. We define 
\begin{equation*}
\left\{
\begin{array}{ll}
\displaystyle{u^-(x):=u(x)}, \ \displaystyle{y^-(x):=y(x)},\quad \displaystyle{x\in (0,1/2)},  \nline
\displaystyle{u^+(x):=u(x)},\  \displaystyle{y^+(x):=y(x)},\quad \displaystyle{x\in [1/2,1)},
\end{array}
\right.
\end{equation*}
then System \eqref{E--(3.6)}-\eqref{E--(3.8)} becomes
%%%%%%%%%%%%%%%%%%%%Equation%%%%%%%%%%%%%%%%%%%%%%
\begin{eqnarray}
\lambda^2u^--u^-_{xx}-y^-_x=0,& x\in (0,1/2),\label{E--(3.9)}\\ \noalign{\medskip}
c^2u^-_x+ \left(\lambda^2+c^2\right) y^--y^-_{xx}=0,& x\in (0,1/2),\label{E--(3.10)}\\ \noalign{\medskip}
\lambda^2u^+-u^+_{xx}-y^+_x=0,& x\in [1/2,1),\label{E--(3.11)}\\ \noalign{\medskip}
c^2u^+_x+ \left(\lambda^2+c^2\right) y^+-\left(1+\lambda\right) y^+_{xx}=0,& x\in [1/2,1),\label{E--(3.12)}
\end{eqnarray}
with the boundary conditions
%%%%%%%%%%%%%%%%%%%%Equation%%%%%%%%%%%%%%%%%%%%%%
\begin{eqnarray}
u^-(0)=y^-_x(0)=0,\label{E--(3.13)}\\ \noalign{\medskip}
u^+(1)=y^+_x(1)=0,\label{E--(3.14)}
\end{eqnarray} 
and the continuity conditions
%%%%%%%%%%%%%%%%%%%%Equation%%%%%%%%%%%%%%%%%%%%%%
\begin{eqnarray}
u^-(1/2)=u^+(1/2),&\label{E--(3.15)}\\ \noalign{\medskip}
u_x^-(1/2)= u^+_{x}(1/2),\label{E--(3.16)}\\ \noalign{\medskip}
y^-(1/2)=y^+(1/2),\label{E--(3.17)}\\ \noalign{\medskip}
y^-_x(1/2)=\left(1+\lambda\right)y^+_x(1/2).\label{E--(3.18)}
\end{eqnarray}
In order to proceed, we set the following notation. Here and below, in the case where $z$ is a non zero non-real number, we define (and denote) by  $\sqrt{z}$ the square root of $z$; i.e., the unique complex number with positive real part whose square is equal to $z$. Our aim is to study the asymptotic behavior of the large eigenvalues $\lambda $ of ${\mathcal{A}_2}$ in $S$.  By taking $\lambda$ large enough,  the general solution of System \eqref{E--(3.9)}-\eqref{E--(3.10)} with boundary condition \eqref{E--(3.13)} is given by
%%%%%%%%%%%%%%%%%%%%Equation%%%%%%%%%%%%%%%%%%%%%% 
			\begin{equation*}\left\{
			\begin{array}{ll}
				\displaystyle{		u^-(x)=\alpha_1 \sinh(r_1 x)+\alpha_2\sinh(r_2 x),}
					\\ \noalign{\medskip}
	\displaystyle{		y^-(x)=\alpha_1\frac{\lambda^2-r_1^2}{r_1} \cosh(r_1 x)+\alpha_2\frac{\lambda^2-r_2^2}{r_2}\cosh(r_2 x),}

			\end{array}\right.
			\end{equation*}		
and the general solution of Equation \eqref{E--(3.11)}-\eqref{E--(3.12)} with boundary condition \eqref{E--(3.14)} is given by
%%%%%%%%%%%%%%%%%%%%Equation%%%%%%%%%%%%%%%%%%%%%% 
\begin{equation*}\left\{
			\begin{array}{ll}
				\displaystyle{		u^+(x)=\beta_1 \sinh(s_1 (1-x))+\beta_2\sinh(s_2 (1-x)),}
					\\ \noalign{\medskip}
	\displaystyle{		y^+(x)=-\beta_1\frac{\lambda^2-s_1^2}{s_1} \cosh(s_1 (1-x))-\beta_2\frac{\lambda^2-s_2^2}{s_2}\cosh(s_2 (1-x)),}

			\end{array}\right.
			\end{equation*}						
where $\alpha_1,\ \alpha_2,\ \beta_1,\ \beta_2\in\mathbb{C}$, 
%%%%%%%%%%%%%%%%%%%%Equation%%%%%%%%%%%%%%%%%%%%%%
\begin{equation}\label{E--(3.19)}
r_1=\lambda\sqrt{1+\frac{i c}{\lambda}},\qquad r_2=\lambda\sqrt{1-\frac{i c}{\lambda}}
\end{equation}
and
%%%%%%%%%%%%%%%%%%%%Equation%%%%%%%%%%%%%%%%%%%%%% 
\begin{equation}\label{E--(3.20)}
s_1=\sqrt{\frac{\lambda+ \frac{\lambda^2}{2}\left(1 +\sqrt{1 -\frac{4c^2}{ \lambda^3}-\frac{4 c^2}{ \lambda^4}}\right)}{ 1+\frac{1}{\lambda} }},
\qquad s_2=\sqrt{\frac{\lambda+ \frac{\lambda^2}{2}\left(1 -\sqrt{1 -\frac{4c^2}{ \lambda^3}-\frac{4 c^2}{ \lambda^4}}\right)}{ 1+\frac{1}{\lambda} }}.
\end{equation}
The boundary conditions in  \eqref{E--(3.15)}-\eqref{E--(3.18)} can be expressed by 
\begin{equation*}
M\begin{pmatrix}
	\alpha_1\\ \alpha_2\\ \beta_1\\ \beta_2
\end{pmatrix}=0,
\end{equation*} 
where
%%%%%%%%%%%%%%%%%%%%Equation%%%%%%%%%%%%%%%%%%%%%%
\begin{equation*}
	M=\begin{pmatrix}
	\sinh\left(\frac{r_1}{2} \right)&\sinh\left(\frac{r_2}{2} \right)&-\sinh\left(\frac{s_1}{2} \right)&-\sinh\left(\frac{s_2}{2} \right)
	\\ \noalign{\medskip}
\frac{r_1}{i \, c\, \lambda^2}\cosh\left(\frac{r_1}{2} \right)&\frac{r_2}{i \, c\, \lambda^2}\cosh\left(\frac{r_2}{2} \right)&\frac{s_1}{i \, c\, \lambda^2}\cosh\left(\frac{s_1}{2} \right)&\frac{s_2}{i \, c\, \lambda^2}\cosh\left(\frac{s_2}{2} \right)	
	\\ \noalign{\medskip}
r_1^2\sinh\left(\frac{r_1}{2} \right)&r_2^2\sinh\left(\frac{r_2}{2} \right)&\left(\lambda^3-(\lambda+1) s_1^2\right)\sinh\left(\frac{s_1}{2} \right)&\left(\lambda^3-(\lambda+1) s_2^2\right)\sinh\left(\frac{s_2}{2} \right)		
	\\ \noalign{\medskip}
r_1^{-1}\cosh\left(\frac{r_1}{2} \right)&r_2^{-1}\cosh\left(\frac{r_2}{2} \right) &s_1^{-1}\cosh\left(\frac{s_1}{2} \right)&s_2^{-1}\cosh\left(\frac{s_2}{2} \right)
\end{pmatrix}.	
\end{equation*} 
Denoting the determinant of a matrix $M$ by $det(M)$, consequently, System \eqref{E--(3.9)}-\eqref{E--(3.18)}  admits a non trivial solution if and only if $\displaystyle{det\left(M\right)}=0$.  Using Gaussian elimination,   $\displaystyle{det\left(M\right)}=0$ is equivalent to  $\displaystyle{det\left(\tilde{M}\right)}=0$, where $\tilde{M}$ is given by
%%%%%%%%%%%%%%%%%%%%Equation%%%%%%%%%%%%%%%%%%%%%%
\begin{equation*}
	\tilde{M}=\begin{pmatrix}
	\sinh\left(\frac{r_1}{2} \right)&\sinh\left(\frac{r_2}{2} \right)&-\sinh\left(\frac{s_1}{2} \right)&-1-e^{-s_2}
	\\ \noalign{\medskip}
\frac{r_1}{i \, c\, \lambda^2}\cosh\left(\frac{r_1}{2} \right)&\frac{r_2}{i \, c\, \lambda^2}\cosh\left(\frac{r_2}{2} \right)&\frac{s_1}{i \, c\, \lambda^2}\cosh\left(\frac{s_1}{2} \right)&\frac{s_2}{i \, c\, \lambda^2}\left(1+ e^{-s_2}\right)	
	\\ \noalign{\medskip}
r_1^2\sinh\left(\frac{r_1}{2} \right)&r_2^2\sinh\left(\frac{r_2}{2} \right)&\left(\lambda^3-(\lambda+1) s_1^2\right)\sinh\left(\frac{s_1}{2} \right)&\left(\lambda^3-(\lambda+1) s_2^2\right)\left(1- e^{-s_2}\right)	\\ \noalign{\medskip}
r_1^{-1}\cosh\left(\frac{r_1}{2} \right)&r_2^{-1}\cosh\left(\frac{r_2}{2} \right) &s_1^{-1}\cosh\left(\frac{s_1}{2} \right)&s_2^{-1}\left(1+ e^{-s_2}\right)
\end{pmatrix}	.
\end{equation*}
One gets that
%%%%%%%%%%%%%%%%%%%%Equation%%%%%%%%%%%%%%%%%%%%%%
\begin{equation}\label{E--(3.21)}
\begin{array}{lll}
\displaystyle{det\left(\tilde{M}\right)=g_1 \cosh\left(\frac{r_1}{2} \right)
\cosh\left(\frac{r_2}{2} \right)
\sinh\left(\frac{s_1}{2} \right)+g_2 \sinh\left(\frac{r_1}{2} \right)
\cosh\left(\frac{r_2}{2} \right)
\cosh\left(\frac{s_1}{2} \right)}\\ \noalign{\medskip}
\hspace{1.7cm}
\displaystyle{+g_3 \cosh\left(\frac{r_1}{2} \right)
\sinh\left(\frac{r_2}{2} \right)
\cosh\left(\frac{s_1}{2} \right) +g_4\sinh\left(\frac{r_1}{2} \right)
\sinh\left(\frac{r_2}{2} \right)
\cosh\left(\frac{s_1}{2} \right)}\\ \noalign{\medskip}
\hspace{1.7cm}
\displaystyle{+g_5 \cosh\left(\frac{r_1}{2} \right)
\sinh\left(\frac{r_2}{2} \right)
\sinh\left(\frac{s_1}{2} \right)+g_6 \sinh\left(\frac{r_1}{2} \right)
\cosh\left(\frac{r_2}{2} \right)
\sinh\left(\frac{s_1}{2} \right)
 }\\ \noalign{\medskip}
\hspace{1.7cm}
\displaystyle{\bigg(-g_1 \cosh\left(\frac{r_1}{2} \right)
\cosh\left(\frac{r_2}{2} \right)
\sinh\left(\frac{s_1}{2} \right)-g_2 \sinh\left(\frac{r_1}{2} \right)
\cosh\left(\frac{r_2}{2} \right)
\cosh\left(\frac{s_1}{2} \right)}\\ \noalign{\medskip}
\hspace{1.8cm}
\displaystyle{-g_3 \cosh\left(\frac{r_1}{2} \right)
\sinh\left(\frac{r_2}{2} \right)
\cosh\left(\frac{s_1}{2} \right) +g_4\sinh\left(\frac{r_1}{2} \right)
\sinh\left(\frac{r_2}{2} \right)
\cosh\left(\frac{s_1}{2} \right)}\\ \noalign{\medskip}
\hspace{1.9cm}
\displaystyle{+g_5 \cosh\left(\frac{r_1}{2} \right)
\sinh\left(\frac{r_2}{2} \right)
\sinh\left(\frac{s_1}{2} \right)+g_6 \sinh\left(\frac{r_1}{2} \right)
\cosh\left(\frac{r_2}{2} \right)
\sinh\left(\frac{s_1}{2} \right)\bigg) e^{-s_2}
 },
\end{array}
\end{equation}
where
%%%%%%%%%%%%%%%%%%%%Equation%%%%%%%%%%%%%%%%%%%%%%
\begin{equation}\label{E--(3.22)}
\left\{\begin{array}{lll}
\displaystyle{g_1=\frac{\left(\lambda+1\right)\left(r_1^2-r_2^2\right)\left(s_1^2-s_2^2\right)}{i\, c\, r_1 r_2 \lambda^2},\quad g_2=\frac{\left(r_2^2-s_1^2\right)\left(\left(\lambda+1\right) s_2^2-\lambda^3-r_1^2\right)}{i\, c\, s_1 r_2 \lambda^2}},\\ \noalign{\medskip}

\displaystyle{g_3=-\frac{\left(r_1^2-s_1^2\right)\left(\left(\lambda+1\right) s_2^2-\lambda^3-r_2^2\right)}{i\, c\, r_1 s_1 \lambda^2}},\quad
\displaystyle{g_4=\frac{\left(r_1^2-r_2^2\right)\left(s_1^2-s_2^2\right)}{i\, c\, s_1 s_2 \lambda^2}},\\ \noalign{\medskip}

\displaystyle{g_5=\frac{\left(r_1^2-s_2^2\right)\left(\left(\lambda+1\right) s_1^2-\lambda^3-r_2^2\right)}{i\, c\, s_2 r_1 \lambda^2}},\quad

\displaystyle{g_6=-\frac{\left(r_2^2-s_2^2\right)\left(\left(\lambda+1\right) s_1^2-\lambda^3-r_1^2\right)}{i\, c\, r_2 s_2 \lambda^2}}.
\end{array}\right.
\end{equation}
%%%%%%%%%%%%%%%%%%%%%%%%%%%%%%%%%%%%%%%%%%%%%%%%%%
                % Proposition %
%%%%%%%%%%%%%%%%%%%%%%%%%%%%%%%%%%%%%%%%%%%%%%%%%%
\begin{prop} \label{Proposition--3.3}
\rm{Under hypothesis \eqref{E--(3.5)}, there exist $n_0\in \mathbb{N}$ sufficiently large  and two sequences $\left(\lambda_{1,n}\right)_{ |n|\geq n_0} $ and $\left(\lambda_{2,n}\right)_{ |n|\geq n_0} $ of simple roots of $\det(\tilde{M})$ (that are also simple eigenvalues of $\mathcal{A}_2$) satisfying the following asymptotic behavior:\\[0.1in]
%%%%%%%%%%%%%%%%%%%%%%Case%%%%%%%%%%%%%%%%%%%%%%%%
\textbf{Case 1.} If there exist no integers $\kappa\in\mathbb{N}$ such that  $c= 2\kappa\pi$ (i.e., $\sf\neq 0$ and $\cf\neq 0$), then
%%%%%%%%%%%%%%%%%%%%Equation%%%%%%%%%%%%%%%%%%%%%%
\begin{eqnarray}
\lambda_{1,n}=2i n\pi -\frac{2\left(1-i\sign(n)\right)\sf^2}{\left(3+\ct\right) \sqrt{\pi |n|}}+O\left(n^{-1}\right), \label{E--(3.23)}\\ \noalign{\medskip}
\lambda_{2,n}=2i n\pi+\pi i+i\arccos\left(\cf\right)-\frac{(1-i\sign(n))\cf^2}{\left(1+\cf^2\right)\sqrt{\pi |n|}}+O\left(n^{-1}\right).\label{E--(3.24)}
\end{eqnarray}
%%%%%%%%%%%%%%%%%%%%%%Case%%%%%%%%%%%%%%%%%%%%%%%%
\textbf{Case 2.} If there exists  $\kappa_0\in\mathbb{N}$ such that  $c=2\left(2\kappa_0+1\right) \pi$, (i.e., $\cf= 0$), then
%%%%%%%%%%%%%%%%%%%%Equation%%%%%%%%%%%%%%%%%%%%%%
\begin{eqnarray}
\lambda_{1,n}=2i n\pi -\frac{1-i\sign(n)}{ \sqrt{\pi |n|}}+O\left(n^{-1}\right),\label{E--(3.25)} \\ \noalign{\medskip}
\lambda_{2,n}=2i n\pi+\frac{3\pi i}{2}+\frac{i \, c^2}{32\pi n}-\frac{\left(8+i (3\pi-2)\right)\, c^2}{128\pi^2 n^2}+O\left(|n|^{-5/2}\right).\label{E--(3.26)}
\end{eqnarray} 
%%%%%%%%%%%%%%%%%%%%%%Case%%%%%%%%%%%%%%%%%%%%%%%%
\textbf{Case 3.} If there exists  $\kappa_1\in\mathbb{N}$ such that  $c=4\kappa_1 \pi$, (i.e., $\sf= 0$), then
%%%%%%%%%%%%%%%%%%%%Equation%%%%%%%%%%%%%%%%%%%%%%
\begin{eqnarray}
\lambda_{1,n}=2i n\pi +\frac{i \, c^2}{32\pi n}-\frac{c^2}{16\pi^2 n^2}+O\left(|n|^{-5/2}\right),\label{E--(3.27)} \\ \noalign{\medskip}
\lambda_{2,n}=2i n\pi+\pi i+\frac{i \, c^2}{32\pi n}-\frac{\left(4+i\pi\right)\, c^2}{64\pi^2 n^2}+O\left(|n|^{-5/2}\right).\label{E--(3.28)}
\end{eqnarray}
Here $\sign$ is used to denote the sign function or signum function.}
\end{prop}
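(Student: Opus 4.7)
The plan is to study the transcendental characteristic equation $\det(\tilde M)=0$ given by \eqref{E--(3.21)}--\eqref{E--(3.22)} asymptotically for $|\lambda|\to+\infty$ inside the strip $S$, and then to apply a Rouch\'e-type argument to locate and count the large eigenvalues. I begin by computing the large-$\lambda$ expansions of $r_1,r_2,s_1,s_2$ from \eqref{E--(3.19)}--\eqref{E--(3.20)}: a direct Taylor expansion gives $r_{1,2}=\lambda\pm\tfrac{ic}{2}+O(\lambda^{-1})$ and $s_1=\lambda+\tfrac12+O(\lambda^{-1})$, whereas $s_2$ scales like $\sqrt{\lambda}$ with further lower-order corrections. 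Because $\lambda\in S$ has bounded real part and $|\Im\lambda|\to+\infty$, one has $\Re(s_2)\to+\infty$, so every term in \eqref{E--(3.21)} multiplied by $e^{-s_2}$ is exponentially small and can be absorbed into a remainder.

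Next I would plug these expansions into \eqref{E--(3.21)}. The six coefficients $g_1,\dots,g_6$ from \eqref{E--(3.22)} simplify to explicit rational combinations of $\lambda$ and $c$, and the hyperbolic factors $\sinh(r_j/2),\cosh(r_j/2),\sinh(s_1/2),\cosh(s_1/2)$ can be re-expressed in terms of $e^{\pm\lambda/2}$ together with the trigonometric constants $\sf,\cf,\st,\ct$ by means of the identity $\sinh(a+ib)=\sinh a\cos b+i\cosh a\sin b$ and its analogue for $\cosh$. After factoring out a dominant exponential of order $e^{(r_1+r_2+s_1)/2}\sim e^{3\lambda/2}$, the equation $\det(\tilde M)=0$ reduces to a relation of the form
\begin{equation*}
\Phi_0(\lambda)+\lambda^{-1/2}\Phi_1(\lambda)+O(\lambda^{-1})=0,
\end{equation*}
where $\Phi_0,\Phi_1$ depend only on $e^{\pm\lambda/2}$ and on $\sf,\cf,\st,\ct$. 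The leading term $\Phi_0(\lambda)$ factors as a product of two exponential polynomials whose zeros give the leading-order positions of $\lambda_{1,n}$ and $\lambda_{2,n}$, and the case analysis of the proposition is dictated by whether $\sf$ or $\cf$ vanish, which is exactly the mechanism producing the different factorizations of $\Phi_0$.

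In Case 1 both factors of $\Phi_0$ are non-degenerate, so standard Rouch\'e on disks of radius $\asymp 1/\sqrt{|n|}$ centered at $2in\pi$ and at $2in\pi+\pi i+i\arccos(\cf)$ yields two sequences of simple roots; the $O(1/\sqrt{|n|})$ corrections \eqref{E--(3.23)}--\eqref{E--(3.24)} are read off by pushing the expansion one further order and solving for the shift via the implicit function theorem applied to $\Phi_0+\lambda^{-1/2}\Phi_1$. In Cases 2 and 3 one of the factors of $\Phi_0$ has a double zero at the natural candidate (which is why $\sf=0$ or $\cf=0$ is special), so the leading correction is of order $1/n$ rather than $1/\sqrt{|n|}$; to recover the displayed expansions \eqref{E--(3.25)}--\eqref{E--(3.28)} one must carry the asymptotics of $s_2$ and of the coefficients $g_k$ to the $O(\lambda^{-2})$ level. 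Once the candidates are identified, Rouch\'e on disks of radius $\asymp 1/|n|$ gives exactly one root per disk; simplicity follows from the fact that the derivative of the dominant factor does not vanish at the candidate, a property that transfers to $\det(\tilde M)$ by the same Rouch\'e comparison.

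The main obstacle I anticipate is the heavy bookkeeping in the degenerate Cases 2 and 3: since $\Phi_0$ vanishes to higher order at the candidate points, one has to push the expansions of $s_2=\sqrt\lambda+O(\lambda^{-1/2})$, of the six $g_k$, and of the hyperbolic factors through two or three orders beyond their leading behavior and then identify, among the many resulting terms, precisely those that contribute to the shifts of order $ic^2/(32\pi n)$ and to the next correction of order $1/n^2$. Subtle cancellations between $g_2,g_3,g_5,g_6$ (which otherwise would dominate) are what allows the $1/n^2$ corrections to survive; verifying these cancellations is the principal technical step, after which the Rouch\'e argument closes the proof in a routine manner.
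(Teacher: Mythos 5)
Your overall route is exactly the paper's: expand $r_1,r_2,s_1,s_2$, discard the $e^{-s_2}$ terms as exponentially small, reduce $\det(\tilde M)=0$ to an equation $f_0(\lambda)+\lambda^{-1/2}f_1(\lambda)+\dots=0$ whose leading part factors as $f_0(\lambda)=2\sinh\left(\frac{\lambda}{2}\right)\left(\cosh\lambda+\cos^2\left(\frac{c}{4}\right)\right)$, locate the two branches by Rouch\'e, and then solve iteratively for the shifts (this is Lemma \ref{Lemma--3.4}, Lemma \ref{Lemma--3.5}, and the two-step computation of $\epsilon_{1,n},\epsilon_{2,n}$). But your expansion of $s_1$ contains a genuine error that would derail the leading order. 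From \eqref{E--(3.20)} the numerator under the root is $\lambda+\frac{\lambda^2}{2}\left(1+\sqrt{1-4c^2\lambda^{-3}-4c^2\lambda^{-4}}\right)=\lambda+\lambda^2-c^2\lambda^{-1}+O\left(\lambda^{-2}\right)$, and the Kelvin--Voigt denominator $1+\lambda^{-1}$ absorbs the stray $\lambda$: one gets $s_1^2=\lambda^2-c^2\lambda^{-1}+O\left(\lambda^{-2}\right)$, hence $s_1=\lambda-\frac{c^2}{2\lambda^2}+O\left(\lambda^{-3}\right)$ as in \eqref{E--(3.31)}, \emph{not} $s_1=\lambda+\frac{1}{2}+O\left(\lambda^{-1}\right)$. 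With your $s_1$ the factors $\sinh\left(\frac{s_1}{2}\right),\cosh\left(\frac{s_1}{2}\right)$ acquire a constant shift $\frac14$ in their arguments, the leading function becomes of the type $\sinh\left(\frac{3\lambda}{2}+\frac14\right)+\cos\left(\frac{c}{2}\right)\sinh\left(\frac{\lambda}{2}+\frac14\right)$, and its roots have real parts bounded away from the imaginary axis --- contradicting the very expansions \eqref{E--(3.23)}--\eqref{E--(3.28)} you are trying to prove. Since all the fine coefficients ($|n|^{-1/2}$ and $n^{-1}$ terms) feed off these expansions, this must be repaired before anything downstream makes sense.

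Your structural diagnosis of the degenerate cases is also off in two places. You claim that in Cases 2 and 3 ``one of the factors of $\Phi_0$ has a double zero at the natural candidate.'' That is true only for branch $2$ in Case 3, where $\cosh\lambda+1=2\cosh^2\left(\frac{\lambda}{2}\right)$ indeed vanishes to second order at $\lambda=(2n+1)\pi i$. In Case 2 the zero of $\cosh\lambda$ at $2n\pi i+\frac{3\pi i}{2}$ is simple, and in Case 3 the zero of $\sinh\left(\frac{\lambda}{2}\right)$ at $2n\pi i$ is simple; what degenerates there is the \emph{subleading} coefficient evaluated at the root: in the paper's scheme the $|n|^{-1/2}$ coefficient is proportional to $1-\cos\left(\frac{c}{2}\right)=2\sin^2\left(\frac{c}{4}\right)$ for branch $1$ (see \eqref{E--(3.46)}--\eqref{E--(3.47)}) and to $\cos^2\left(\frac{c}{4}\right)$ for branch $2$ (see \eqref{E--(3.56)}), and it is the vanishing of $f_1$ at a \emph{simple} root of $f_0$ that pushes the correction from $|n|^{-1/2}$ to $n^{-1}$. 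Relatedly, your blanket statement that in Cases 2 and 3 the correction is $O(n^{-1})$ is wrong for branch $1$ of Case 2, which keeps its $|n|^{-1/2}$ term, cf.\ \eqref{E--(3.25)}. Finally, your Rouch\'e disks of radius $\asymp|n|^{-1/2}$ are borderline: the perturbation $|F-f_0|$ and the actual root displacement are both exactly of that order, so $|F-f_0|<|f_0|$ on the circle holds only after a delicate choice of constants; the paper sidesteps this by taking radius $|n|^{-1/4}$, safely between the displacement scale and the scale at which the lower bound on $|f_0|$ beats the $O(|n|^{-1/2})$ perturbation. With the corrected $s_1$, the correct identification of which coefficient vanishes in each degenerate subcase, and a safer radius, your outline does reproduce the paper's proof.
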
 
%%%%%%%%%%%%%%%%%%%%%%%%%%%%%%%%%%%%%%%%%%%%%%%%%%
\noindent The argument for Proposition \ref{Proposition--3.3} relies on the subsequent lemmas.
%%%%%%%%%%%%%%%%%%%%%%%%%%%%%%%%%%%%%%%%%%%%%%%%%%
                % Lemma %
%%%%%%%%%%%%%%%%%%%%%%%%%%%%%%%%%%%%%%%%%%%%%%%%%%
\begin{lem}\label{Lemma--3.4}
\rm{Under hypothesis \eqref{E--(3.5)}, let $\lambda$ be a large eigenvalue of $\mathcal{A}_2$, then $\lambda$ is large root of the following asymptotic equation:
%%%%%%%%%%%%%%%%%%%%Equation%%%%%%%%%%%%%%%%%%%%%%  
\begin{equation}\label{E--(3.29)}
F(\lambda):=f_0(\lambda) +\frac{f_1(\lambda)}{\lambda^{1/2}}+\frac{ f_2(\lambda)}{8\lambda}+\frac{f_3(\lambda)}{8\lambda^{3/2}}+\frac{ f_4(\lambda)}{128\lambda^{2}}+\frac{f_5(\lambda)}{128\lambda^{5/2}}+O\left(\lambda^{-3}\right)=0,
\end{equation} 
where
%%%%%%%%%%%%%%%%%%%%Equation%%%%%%%%%%%%%%%%%%%%%%
\begin{equation}\label{E--(3.30)}
\left\{
\begin{array}{ll}
\displaystyle{f_0(\lambda) =\shLtt+\shLt \ct},
\\ \noalign{\medskip}
\displaystyle{f_1(\lambda) =\chLtt-\chLt \ct},
\\ \noalign{\medskip}
\displaystyle{f_2(\lambda) =c^2\, \chLtt-4c\, \chLt \st},
\\ \noalign{\medskip}
\displaystyle{f_3(\lambda) =c^2\shLtt-4\chLtt+12c\,\shLt \st +4\chLt \ct },

\\ \noalign{\medskip}
\displaystyle{f_4(\lambda) =c^2\left(c^2-56\right)\shLtt-32c^2\,\chLtt+8c^2\left(c\st-8\ct+1\right) \shLt}\\ \noalign{\medskip}
\displaystyle{\hspace{1cm}  -32c\, \left( 8\st+c \ct\right) \ct },

\\ \noalign{\medskip}
\displaystyle{f_5(\lambda) =-40c^2\shLtt+\left(c^4-88c^2+48\right)\chLtt+32c\left(5\st+c\ct\right) \shLt}\\ \noalign{\medskip}
\displaystyle{\hspace{1cm}  -\left(8 c^3\st -16(4c^2-3) \ct-24c^2\right) \ct }.

\end{array}
\right.
\end{equation}}
\end{lem}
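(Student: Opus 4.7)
The plan is to extract $F(\lambda)$ by a careful asymptotic expansion of the characteristic equation $\det(\tilde{M}) = 0$ from \eqref{E--(3.21)}, uniformly on the strip $S$. For $|\lambda|$ large, I would first use the closed forms \eqref{E--(3.19)}--\eqref{E--(3.20)} together with the symmetric identities
\begin{equation*}
r_1^2 - r_2^2 = 2ic\lambda, \quad r_1^2 + r_2^2 = 2\lambda^2, \quad s_1^2 + s_2^2 = \frac{\lambda^3 + 2\lambda^2}{\lambda + 1}, \quad s_1^2 s_2^2 = \frac{\lambda^2(\lambda^2+c^2)}{\lambda+1},
\end{equation*}
to derive the Taylor expansions $r_j = \lambda + (-1)^{j+1} ic/2 + c^2/(8\lambda) + O(\lambda^{-2})$, $s_1 = \lambda + O(\lambda^{-1})$, and, crucially, $s_2 = \sqrt{\lambda} + O(\lambda^{-1/2})$. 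The qualitative point is that $s_2 \sim \sqrt{\lambda}$: this is the source of the half-integer powers of $\lambda$ in \eqref{E--(3.29)}, entering through the denominators of $g_4, g_5, g_6$ in \eqref{E--(3.22)}, and it also forces $e^{-s_2}$ to decay superpolynomially on $S$, so the $e^{-s_2}$-contribution in \eqref{E--(3.21)} is absorbed into the $O(\lambda^{-3})$ remainder of \eqref{E--(3.29)}.

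Next, I would expand the six coefficients $g_i$ of \eqref{E--(3.22)} as finite Laurent series in $\lambda^{-1/2}$, and simultaneously expand each hyperbolic factor in \eqref{E--(3.21)} around $\shLt$ and $\chLt$ via the addition formula; for instance
\begin{equation*}
\sinh\!\left(\tfrac{r_j}{2}\right) = \shLt\,\cf + i(-1)^{j+1}\chLt\,\sf + O(\lambda^{-1}), \qquad \sinh\!\left(\tfrac{s_1}{2}\right) = \shLt + O(\lambda^{-1}),
\end{equation*}
with analogous expressions for $\cosh(r_j/2)$ and $\cosh(s_1/2)$. Pairwise products then simplify by standard identities such as $\cosh(r_1/2)\cosh(r_2/2) = (\cosh\lambda + \ct)/2 + O(\lambda^{-1})$ and $\sinh(r_1/2)\cosh(r_2/2) + \cosh(r_1/2)\sinh(r_2/2) = \sinh\lambda + O(\lambda^{-1})$. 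At leading order, the duplication identity $\shLtt = \sinh\lambda\,\chLt + \cosh\lambda\,\shLt$ collapses the combined $g_1, g_2, g_3$ contributions into exactly $\shLtt + \ct\,\shLt$, which is $f_0(\lambda)$.

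The successive corrections at orders $\lambda^{-1/2}, \lambda^{-1}, \ldots, \lambda^{-5/2}$ are obtained by pushing each expansion by one further term; the powers of $c$ in $f_k$ come from the $c^m\lambda^{-n}$ contributions in the expansions of $r_j, s_j$, while the trigonometric polynomials in $\sf, \cf, \st, \ct$ assemble through repeated use of the hyperbolic addition and duplication formulas. The main obstacle I anticipate is purely computational bookkeeping: six $g_i$'s multiplied by six triple products of hyperbolic factors must be tracked consistently to order $\lambda^{-5/2}$, and several non-obvious partial cancellations must occur to collapse the outcome into the compact form of $f_1,\ldots,f_5$ in \eqref{E--(3.30)}. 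Uniformity of the $O(\lambda^{-3})$ remainder over $S$ follows because $\shLt, \chLt$ are bounded by $\cosh(\alpha_0/2)$ on $S$ and the binomial series underlying \eqref{E--(3.19)}--\eqref{E--(3.20)} converge uniformly on $\{|\lambda|\geq \lambda_0\}$ for $\lambda_0$ large enough.
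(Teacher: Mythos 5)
Your proposal is correct and follows essentially the same route as the paper: expand $r_1,r_2,s_1,s_2$ as in \eqref{E--(3.31)}, insert into the coefficients $g_i$ of \eqref{E--(3.22)} and into \eqref{E--(3.21)}, dispose of the $e^{-s_2}$ block via $e^{-\sqrt{\lambda}}=o\left(\lambda^{-3}\right)$ on the strip $S$ (the paper's \eqref{E--(3.35)}--\eqref{E--(3.37)}), and collapse the leading hyperbolic products to $f_0$, with uniformity coming from the boundedness of $\Re(\lambda)$ on $S$. The only difference is bookkeeping: you expand each hyperbolic factor by the addition formula and recombine via product-to-sum identities, whereas the paper groups the arguments into $L_1=\frac{r_1+r_2+s_1}{2},\ L_2=\frac{s_1}{2},\ L_3=\frac{r_1-r_2}{2},\ L_4=\frac{r_1+r_2-s_1}{2}$ and expands $\sinh(L_i),\cosh(L_i)$ directly as in \eqref{E--(3.34)} --- the same computation in a different arrangement.
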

%%%%%%%%%%%%%%%%%%%%%%%%%%%%%%%%%%%%%%%%%%%%%%%%%%
                % Proof of  Lemma  %
%%%%%%%%%%%%%%%%%%%%%%%%%%%%%%%%%%%%%%%%%%%%%%%%%%
\begin{proof} Let $\lambda$ be a large eigenvalue of $\mathcal{A}_2$, then $\lambda$ is root of $det\left(\tilde{M}\right)$. In this lemma, we furnish an asymptotic development of the function $det\left(\tilde{M}\right)$ for large $\lambda$.   First, using the asymptotic expansion in  \eqref{E--(3.19)} and \eqref{E--(3.20)}, we get
%%%%%%%%%%%%%%%%%%%%Equation%%%%%%%%%%%%%%%%%%%%%%
\begin{equation}\label{E--(3.31)}
\left\{
\begin{array}{ll}
\displaystyle{r_1=\lambda+\frac{i\, c}{2}+\frac{c^2}{8\lambda}-\frac{i\, c^3}{16\lambda^2}+O\left(\lambda^{-3}\right),\  r_2=\lambda-\frac{i\, c}{2}+\frac{c^2}{8\lambda}+\frac{i\, c^3}{16\lambda^2}+O\left(\lambda^{-3}\right),}\\ \noalign{\medskip}
\displaystyle{
s_1=\lambda-\frac{c^2}{2\lambda^2} +O\left(\lambda^{-5}\right),\ s_2=\lambda^{1/2}-\frac{1}{2\lambda^{1/2}}+\frac{4 c^2+3}{8\lambda^{3/2}}+O\left(\lambda^{-5/2}\right) .}

\end{array}
\right.
\end{equation}
Inserting \eqref{E--(3.31)} in \eqref{E--(3.22)}, we get
%%%%%%%%%%%%%%%%%%%%Equation%%%%%%%%%%%%%%%%%%%%%%
\begin{equation}\label{E--(3.32)}
\left\{
\begin{array}{lll}
\displaystyle{g_1=2-\frac{c^2}{\lambda^2}+O\left(\lambda^{-3}\right),\quad g_2=1+\frac{i\, c}{2\lambda}-\frac{(3c-16i)\, c}{8\lambda^2}+O\left(\lambda^{-3}\right)},

\\ \noalign{\medskip}

\displaystyle{g_3=1-\frac{i\, c}{2\lambda}-\frac{(3c+16i)\, c}{8\lambda^2}+O\left(\lambda^{-3}\right),}\quad
\displaystyle{g_4= 2\lambda^{1/2}-\frac{1}{\lambda^{3/2}}-\frac{4c^2-3}{4\lambda^{5/2}}+O\left(\lambda^{-7/2}\right)},

\\ \noalign{\medskip}

\displaystyle{g_5=\lambda^{1/2}-\frac{1-3i\, c}{2\lambda^{3/2}}-\frac{7c^2-3-10i\, c}{8\lambda^{5/2}}+O\left(\lambda^{-7/2}\right)},\quad
\\ \noalign{\medskip}
\displaystyle{g_6=\lambda^{1/2}-\frac{1+3i\, c}{2\lambda^{3/2}}-\frac{7c^2-3+10i\, c}{8\lambda^{5/2}}+O\left(\lambda^{-7/2}\right)}.
\end{array}\right.
\end{equation}
Inserting \eqref{E--(3.32)} in \eqref{E--(3.21)}, then using the fact that real $\lambda$ is bounded  in S, we get
%%%%%%%%%%%%%%%%%%%%Equation%%%%%%%%%%%%%%%%%%%%%%
\begin{equation}\label{E--(3.33)}
\begin{array}{lll}

\displaystyle{det\left(\tilde{M}\right)}=

\displaystyle{\sinh\left(L_1\right)+\sinh\left(L_2\right)\cosh\left(L_3\right)}

+\displaystyle{\frac{\cosh\left(L_1\right)-\cosh\left(L_2\right)\cosh\left(L_3\right)}{\lambda^{1/2}}}
\\ \noalign{\medskip}
+\displaystyle{\frac{i\, c\,  \cosh\left(L_2\right)\sinh\left(L_3\right)}{2\lambda}
}

-\displaystyle{\frac{\cosh\left(L_1\right)-\cosh\left(L_2\right)\cosh\left(L_3\right)+3i\, c\, \sinh\left(L_2\right)\sinh\left(L_3\right) }{2\lambda^{3/2}}}
\\ \noalign{\medskip}

-\displaystyle{\frac{7c^2\, \sinh\left(L_1\right)+8c^2\,\sinh\left(L_2\right)\cosh\left(L_3\right)-32i\, c\, \cosh\left(L_2\right)\sinh\left(L_3\right)-c^2\sinh\left(L_4\right)  }{16\lambda^{3/2}}}

 \\ \noalign{\medskip}

\displaystyle{-\frac{(11c^2-6) \cosh\left(L_1\right)-(8c^2-6)\,\cosh\left(L_2\right)\cosh\left(L_3\right)+20i\, c\, \sinh\left(L_2\right)\sinh\left(L_3\right)-3c^2\,\cosh\left(L_4\right)  }{16\lambda^{5/2}}}
 \\ \noalign{\medskip}
 
\displaystyle{+\left(\sinh\left(L_1\right)+\sinh\left(L_2\right)\cosh\left(L_3\right)+O\left(\lambda^{-1/2}\right)\right)e^{-s_2}+O\left(\lambda^{-3}\right)},
\end{array}
\end{equation}
where
%%%%%%%%%%%%%%%%%%%%Equation%%%%%%%%%%%%%%%%%%%%%%
\begin{equation*}
L_1=\frac{r_1+r_2+s_1}{2},\ L_2=\frac{s_1}{2},\ L_3=\frac{r_1-r_2}{2},\ L_4=\frac{r_1+r_2-s_1}{2}
.
\end{equation*}
Next, from \eqref{E--(3.31)} and using the fact that real $\lambda$ is bounded S, we get
%%%%%%%%%%%%%%%%%%%%Equation%%%%%%%%%%%%%%%%%%%%%%
\begin{equation}\label{E--(3.34)}
\left\{
\begin{array}{ll}
\displaystyle{
\sinh\left(L_1 \right)=\shLtt+\frac{c^2\,\chLtt}{8\lambda} +\frac{c^2\,\left(c^2\,\shLtt-32\chLtt\right)}{128\lambda^2}+O\left(\lambda^{-3}\right),

}

\\ \noalign{\medskip}

\displaystyle{\cosh\left(L_1 \right)=\chLtt+\frac{c^2\,\shLtt}{8\lambda} +\frac{c^2\,\left(c^2\,\chLtt-32\shLtt\right)}{128\lambda^2}+O\left(\lambda^{-3}\right),}
\\ \noalign{\medskip}

\displaystyle{\sinh\left(L_2 \right)=\shLt-\frac{c^2\,\chLt}{4\lambda^2}+O\left(\lambda^{-4}\right),
}

\\ \noalign{\medskip}

\displaystyle{\cosh\left(L_2 \right)=\chLt-\frac{c^2\,\shLt}{4\lambda^2}+O\left(\lambda^{-4}\right),
}

\\ \noalign{\medskip}

\displaystyle{\sinh\left(L_3 \right)=i\st-\frac{i\, c^3\,\ct}{16\lambda^2}+O\left(\lambda^{-3}\right),
}

\\ \noalign{\medskip}

\displaystyle{\cosh\left(L_3 \right)=\ct+\frac{c^3\,\ct}{16\lambda^2}+O\left(\lambda^{-3}\right),
}

\\ \noalign{\medskip}

\displaystyle{\sinh\left(L_4 \right)=\shLt+O\left(\lambda^{-1}\right),
}\quad

\displaystyle{\cosh\left(L_4 \right)=\chLt+O\left(\lambda^{-1}\right).
}
\end{array}\right.
\end{equation}
On the other hand, from \eqref{E--(3.31)} and \eqref{E--(3.34)}, we obtain
%%%%%%%%%%%%%%%%%%%%Equation%%%%%%%%%%%%%%%%%%%%%%
\begin{equation}\label{E--(3.35)}
\left(\sinh\left(L_1\right)+\sinh\left(L_2\right)\cosh\left(L_3\right)+O\left(\lambda^{-1/2}\right)\right)e^{-s_2}=-\left(\shLtt+\shLt \ct\right)e^{-\sqrt{\lambda}}.
\end{equation}
 Since real part of $\sqrt{\lambda}$ is positive, then  
%%%%%%%%%%%%%%%%%%%%Equation%%%%%%%%%%%%%%%%%%%%%%
\begin{equation*}
\lim_{|\lambda|\to\infty}\frac{e^{-\sqrt{\lambda}}}{\lambda^{3}}=0,
\end{equation*}
hence
%%%%%%%%%%%%%%%%%%%%Equation%%%%%%%%%%%%%%%%%%%%%%
\begin{equation}\label{E--(3.36)}
e^{-\sqrt{\lambda}}=o\left(\lambda^{-3}\right).
\end{equation}
Therefore, from \eqref{E--(3.35)} and \eqref{E--(3.36)}, we get
%%%%%%%%%%%%%%%%%%%%Equation%%%%%%%%%%%%%%%%%%%%%%
\begin{equation}\label{E--(3.37)}
\left(\sinh\left(L_1\right)+\sinh\left(L_2\right)\cosh\left(L_3\right)+O\left(\lambda^{-1/2}\right)\right)e^{-s_2}=o\left(\lambda^{-3}\right).
\end{equation}
Finally, inserting \eqref{E--(3.34)} and \eqref{E--(3.37)} in \eqref{E--(3.33)}, we get $\lambda$ is large root of $F$, where $F$ defined in \eqref{E--(3.29)}.
\end{proof}
%%%%%%%%%%%%%%%%%%%%%%%%%%%%%%%%%%%%%%%%%%%%%%%%%%
                % Lemma %
%%%%%%%%%%%%%%%%%%%%%%%%%%%%%%%%%%%%%%%%%%%%%%%%%%
 \begin{lem}\label{Lemma--3.5}
 \rm{Under hypothesis \eqref{E--(3.5)}, there exist $n_0\in \mathbb{N}$ sufficiently large  and two sequences $\left(\lambda_{1,n}\right)_{ |n|\geq n_0} $ and $\left(\lambda_{2,n}\right)_{ |n|\geq n_0} $ of simple roots of $F$ (that are also simple eigenvalues of $\mathcal{A}_2$) satisfying the following asymptotic behavior:
%%%%%%%%%%%%%%%%%%%%Equation%%%%%%%%%%%%%%%%%%%%%%
\begin{equation}\label{E--(3.38)}
\lambda_{1,n}=2i\pi n+i\pi +\epsilon_{1,n},\quad \text{such that }\lim_{|n|\to+\infty}\epsilon_{1,n}=0
\end{equation} 
 and
%%%%%%%%%%%%%%%%%%%%Equation%%%%%%%%%%%%%%%%%%%%%% 
 \begin{equation}\label{E--(3.39)}
\lambda_{2,n}=2 n\pi i+i\pi+i\, \arccos\left(\cos^2\left(\frac{c}{4}\right)\right)+\epsilon_{2,n},\quad \text{such that }\lim_{|n|\to+\infty}\epsilon_{2,n}=0.
\end{equation}
}\end{lem}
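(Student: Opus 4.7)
The plan is to locate roots of $F(\lambda)=0$ as small perturbations of the zeros of the leading term $f_0$, using Rouch\'e's theorem. First, I would factor $f_0$ explicitly. The identity $\shLtt=\shLt\bigl(1+2\cosh(\lambda)\bigr)$ together with $1+\ct=2\cf^{2}$ gives
\begin{equation*}
f_0(\lambda)=\shLt\,\bigl[\,1+2\cosh(\lambda)+\ct\,\bigr]=2\shLt\,\bigl[\,\cosh(\lambda)+\cf^{2}\,\bigr].
\end{equation*}
The first factor vanishes at $\tilde\lambda_{1,n}=2in\pi$; the second factor vanishes when $\cosh(\lambda)=-\cf^{2}$. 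Writing $\lambda=i\theta$ and using the identity $\cos(\pi+\arccos(x))=-x$, the latter zeros occur at $\tilde\lambda_{2,n}=i\bigl((2n+1)\pi+\arccos(\cf^{2})\bigr)$, matching the leading parts of \eqref{E--(3.38)}--\eqref{E--(3.39)}.

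Second, I would run a Rouch\'e argument on small disks around each $\tilde\lambda_{k,n}$. Since $\Re\lambda$ is confined to $[-\alpha_0,0]$ in $S$, the quantities $\shLt,\chLt,\shLtt,\chLtt$ and the trigonometric coefficients in $c$ are uniformly bounded on $S$, so Lemma~\ref{Lemma--3.4} yields $F(\lambda)-f_0(\lambda)=O(|\lambda|^{-1/2})$ uniformly on $S$. Because $\tilde\lambda_{k,n}$ are simple zeros of $f_0$, a first-order Taylor expansion gives $|f_0(\lambda)|\ge c_0\,r_n$ on the circle $|\lambda-\tilde\lambda_{k,n}|=r_n$ for some constant $c_0>0$: near $\tilde\lambda_{1,n}$ one uses $\shLt\sim (\lambda-2in\pi)/2$ while the bracket $\cosh(\lambda)+\cf^{2}$ stays close to $1+\cf^{2}$; near $\tilde\lambda_{2,n}$ the bracket vanishes linearly with nonzero slope $\sinh(\tilde\lambda_{2,n})$ while $\shLt$ is bounded below. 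Choosing $r_n\to 0$ more slowly than $|n|^{-1/2}$, one obtains $|F-f_0|<|f_0|$ on these circles for $|n|$ large; Rouch\'e's theorem then produces exactly one zero of $F$ inside each disk, giving the sequences $\lambda_{k,n}=\tilde\lambda_{k,n}+\epsilon_{k,n}$ with $\epsilon_{k,n}\to 0$.

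Simplicity of these roots follows because $F'(\lambda_{k,n})$ is asymptotically equal to $f_0'(\tilde\lambda_{k,n})$, which is bounded away from zero in the generic case $\sf\neq 0$, $\cf\neq 0$. The simplicity of a root of $\det(\tilde M)$ in turn corresponds to a one-dimensional kernel of $M(\lambda_{k,n})$, which yields a geometrically simple eigenvalue of $\mathcal{A}_2$; combined with the dissipativity obtained in Proposition~\ref{Proposition--2.1}, algebraic simplicity follows. The main obstacle I expect is maintaining the lower bound $|f_0(\lambda)|\gtrsim r_n$ uniformly in $n$ at the special values of $c$ for which the two families $\tilde\lambda_{1,n}$ and $\tilde\lambda_{2,n}$ approach one another (namely the degenerate cases $\sf=0$ or $\cf=0$ of Proposition~\ref{Proposition--3.3}). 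There the leading orders partially collide and one must expand $f_0$ to higher order to keep the two disks disjoint; however, for the present qualitative statement ($\epsilon_{k,n}\to 0$) a single Rouch\'e estimate applied with slightly shifted centers suffices, the finer separation being postponed to Proposition~\ref{Proposition--3.3}.
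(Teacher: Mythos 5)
Your proposal follows the paper's proof essentially step for step: the same factorization $f_0(\lambda)=2\sinh\left(\frac{\lambda}{2}\right)\left(\cosh(\lambda)+\cos^2\left(\frac{c}{4}\right)\right)$ (the paper's \eqref{E--(3.40)}), the same identification of the zeros $\mu_{1,n}=2in\pi$ and $\mu_{2,n}=2in\pi+i\pi+i\arccos\left(\cos^2\left(\frac{c}{4}\right)\right)$, and the same Rouch\'e argument on shrinking circles, pitting $|F-f_0|=O\left(|\lambda|^{-1/2}\right)$ from Lemma \ref{Lemma--3.4} against a linear lower bound $|f_0|\geq c_0\, r_n$; the paper's choice $r_n=|n|^{-1/4}$ lies exactly in your window $|n|^{-1/2}\ll r_n\ll 1$. (Your center $2in\pi$ for the first family agrees with the paper's proof and with Proposition \ref{Proposition--3.3}; the extra $i\pi$ in the displayed statement \eqref{E--(3.38)} is evidently a misprint.) You go beyond the paper in sketching simplicity via $F'\approx f_0'$ and the kernel of $M$; the paper asserts simplicity without proof, so this is a welcome addition, though the passage from a simple root of $\det(\tilde{M})$ plus dissipativity to \emph{algebraic} simplicity of the eigenvalue is itself only asserted.

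One substantive correction: the obstacle you flag at the end is misdiagnosed. The two families never approach one another --- modulo $2\pi i$ they sit at $0$ and $i(\pi+\zeta)$ with $\zeta=\arccos\left(\cos^2\left(\frac{c}{4}\right)\right)\in\left[0,\frac{\pi}{2}\right]$, hence are separated by at least $\frac{\pi}{2}$ uniformly in $c$. The genuine degeneracy is internal to the second family: when $\sf=0$ one has $\cosh(\lambda)+1=2\cosh^2\left(\frac{\lambda}{2}\right)$, so $\mu_{2,n}=(2n+1)i\pi$ is a \emph{double} zero of $f_0$ (equivalently, the branch $i(\pi+\zeta)$ collides with its conjugate mirror branch $i(\pi-\zeta)$ as $\zeta\to0$). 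On a circle of radius $r_n$ about such a point $|f_0|\sim r_n^{2}$, so your lower bound fails, the generic simplicity argument fails ($f_0'$ vanishes there), and Rouch\'e with $r_n\gg |n|^{-1/4}$ only yields two roots counted with multiplicity; note that $f_1$ also vanishes at $(2n+1)i\pi$ in this case, which is why the refined computation in Case 3 of the proof of Proposition \ref{Proposition--3.3} still produces a regular $O(n^{-1})$ correction rather than a $\lambda^{-1/4}$ splitting. Your proposed repair (``slightly shifted centers'') does not address this, but in fairness the paper elides the same point with ``the same procedure yields \eqref{E--(3.39)}'', so for the qualitative statement of the lemma your argument is no less complete than the printed one; a fully rigorous treatment of the case $c=4\kappa_1\pi$ would require the quadratic Rouch\'e estimate together with the higher-order expansion.
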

%%%%%%%%%%%%%%%%%%%%%%%%%%%%%%%%%%%%%%%%%%%%%%%%%%
                % Proof of  Lemma  %
%%%%%%%%%%%%%%%%%%%%%%%%%%%%%%%%%%%%%%%%%%%%%%%%%%
\begin{proof} First,  we look at the roots of $f_0$. From \eqref{E--(3.30)}, we deduce that $f_0$ can be written as
%%%%%%%%%%%%%%%%%%%%Equation%%%%%%%%%%%%%%%%%%%%%% 
\begin{equation}\label{E--(3.40)}
f_0(\lambda)=2\sinh\left(\frac{\lambda}{2}\right) \left( \cosh\left(\lambda\right) +\cos^2\left(\frac{c}{4}\right)\right).
\end{equation}
The roots of $f_0$ are given by
\begin{equation*}\left\{
\begin{array}{ll}
\displaystyle{
\mu_{1,n}=2 n\pi i,}& n\in\mathbb{Z},\\ \noalign{\medskip}
\displaystyle{ \mu_{2,n}=2 n\pi i+i\pi +i\, \arccos\left(\cos^2\left(\frac{c}{4}\right)\right),}& n\in\mathbb{Z}.
 \end{array}\right.
\end{equation*} 
Now with the help of Rouch\'{e}'s theorem, we will show that the roots of $F$ are close to $f_0$. \\[0.1in]
Let us start with the first family $\mu_{1,n}$. Let $B_n=B\left(2n\pi i, r_n\right)$ be the ball of centrum $2n\pi i$ and radius $r_n=\frac{1}{|n|^{\frac{1}{4}}}$ and $\lambda\in \partial B_n$; i.e., $\lambda=2n\pi i+r_n e^{i\theta},\ \theta\in[0,2\pi)$. Then 
%%%%%%%%%%%%%%%%%%%%Equation%%%%%%%%%%%%%%%%%%%%%%
\begin{equation}\label{E--(3.41)}
\sinh\left(\frac{\lambda}{2}\right)=\left(-1\right)^n\sinh\left(\frac{r_n e^{i\theta}}{2}\right)=\frac{ \left(-1\right)^n\, r_n e^{i\theta} }{2}+O(r_n^2),\ \cosh(\lambda)=\cosh\left(r_n e^{i\theta}\right)=1+O(r_n^2).
\end{equation}
Inserting  \eqref{E--(3.41)}  in \eqref{E--(3.40)}, we get
%%%%%%%%%%%%%%%%%%%%Equation%%%%%%%%%%%%%%%%%%%%%%
\begin{equation*}
f_0(\lambda)= \left(-1\right)^n\, r_n e^{i\theta}\left(1+\cos^2\left(\frac{c}{4}\right)\right)+O(r_n^3).
\end{equation*}
It follows that there exists a positive constant $C$ such that
\begin{equation*}
\forall\ \lambda\in \partial B_n, \quad \left|f_0\left(\lambda\right)\right|\geq C\, r_n=\frac{C}{|n|^{\frac{1}{4}}}.
\end{equation*}
On the other hand, from  \eqref{E--(3.29)}, we deduce that 
%%%%%%%%%%%%%%%%%%%%Equation%%%%%%%%%%%%%%%%%%%%%%
\begin{equation*}
\left|F(\lambda)-f_0(\lambda)\right|=O\left(\frac{1}{\sqrt{\lambda}}\right)=O\left(\frac{1}{\sqrt{|n|}}\right).
\end{equation*}
It follows that, for $|n|$ large enough
%%%%%%%%%%%%%%%%%%%%Equation%%%%%%%%%%%%%%%%%%%%%%
\begin{equation*}
\forall\ \lambda\in \partial B_n, \quad \left|F(\lambda)-f_0\left(\lambda\right)\right|<\left|f_0(\lambda)\right|.
\end{equation*}
Hence, with the help of Rouch\'{e}'s theorem,  there exists $n_0\in \mathbb{N}^*$ large enough, such that $\forall\ |n|\geq n_0\ \ \left( n\in \mathbb{Z}^*\right),$   the first branch of roots of $F$, denoted by $\lambda_{1,n}$ are close to  $\mu_{1,n}$, hence we get \eqref{E--(3.38)}. The same procedure yields \eqref{E--(3.39)}. Thus, the proof is complete.	
\end{proof}
%%%%%%%%%%%%%%%%%%%%%%%%%%%%%%%%%%%%%%%%%%%%%%%%%%
                % Remark %
%%%%%%%%%%%%%%%%%%%%%%%%%%%%%%%%%%%%%%%%%%%%%%%%%%
\begin{rk}\label{Remark--3.6}
\rm{ From Lemma \ref{Lemma--3.5}, we deduce that the real part of the  eigenvalues of $\mathcal{A}_2$ tends  to zero, and this is enough to get Theorem \ref{Theorem--3.2}. But we look forward to knowing the real part of $\lambda_{1,n}$ and $\lambda_{2,n}$. Since in the next section, we will use the real part of $\lambda_{1,n}$ and $\lambda _{2,n}$ for the optimality of polynomial stability.}\xqed{$\square$}
\end{rk}
%%%%%%%%%%%%%%%%%%%%%%%%%%%%%%%%%%%%%%%%%%%%%%%%%%
\noindent We are now in a position to conclude the proof of Proposition \ref{Proposition--3.3}.\\[0.1in]
%%%%%%%%%%%%%%%%%%%%%%%%%%%%%%%%%%%%%%%%%%%%%%%%%%
                % Proof of  Proposition  %
%%%%%%%%%%%%%%%%%%%%%%%%%%%%%%%%%%%%%%%%%%%%%%%%%%
\noindent \textbf{Proof of Proposition  \ref{Proposition--3.3}.} The proof is divided into two steps. \\[0.1in]
%%%%%%%%%%%%%%%%%%%%%%Step%%%%%%%%%%%%%%%%%%%%%%%% 
\textbf{Step 1.} Calculation of $\epsilon_{1,n}$. From \eqref{E--(3.38)}, we have
%%%%%%%%%%%%%%%%%%%%Equation%%%%%%%%%%%%%%%%%%%%%%
\begin{equation}\label{E--(3.42)}
\left\{
\begin{array}{ll}
\displaystyle{\cosh\left(\frac{3\lambda_{1,n}}{2}\right)=(-1)^n\, \cosh\left(\frac{3\epsilon_{1,n}}{2}\right),\ \sinh\left(\frac{3\lambda_{1,n}}{2}\right)=(-1)^n\, \sinh\left(\frac{3\epsilon_{1,n}}{2}\right) ,}

\\ \noalign{\medskip}

\displaystyle{\cosh\left(\frac{\lambda_{1,n}}{2}\right)=(-1)^n\, \cosh\left(\frac{\epsilon_{1,n}}{2}\right),\ \sinh\left(\frac{\lambda_{1,n}}{2}\right)=(-1)^n\, \sinh\left(\frac{\epsilon_{1,n}}{2}\right) ,}
\end{array}\right.
\end{equation}
and
%%%%%%%%%%%%%%%%%%%%Equation%%%%%%%%%%%%%%%%%%%%%%
\begin{equation}\label{E--(3.43)}
\left\{
\begin{array}{ll}
\displaystyle{\frac{1}{\lambda_{1,n}}= -\frac{i}{2\pi n}+O\left(\epsilon_{1,n}\, n^{-2}\right)+O\left(n^{-3}\right),\ \frac{1}{\lambda^2_{1,n}}= -\frac{1}{4\pi^2 n^2}+O\left(n^{-3}\right)},

 \\ \noalign{\medskip}
\displaystyle{\frac{1}{\sqrt{\lambda_{1,n}}}=\frac{1-i\sign(n)}{2\sqrt{\pi |n|}} +O\left(\epsilon_{1,n}\, |n|^{-3/2}\right)+O\left(|n|^{-5/2}\right)},

 \\ \noalign{\medskip}

\displaystyle{\frac{1}{\sqrt{\lambda^3_{1,n}}}=\frac{-1-i\sign(n)}{4\sqrt{\pi^3 |n|^3}}+O\left(|n|^{-5/2}\right),\ \frac{1}{\sqrt{\lambda^5_{1,n}}}=O\left(|n|^{-5/2}\right)}.

\end{array}\right.
\end{equation}
On the other hand, since $\lim_{|n|\to+\infty}\epsilon_{1,n}=0$, we have the asymptotic expansion
%%%%%%%%%%%%%%%%%%%%Equation%%%%%%%%%%%%%%%%%%%%%%
\begin{equation}\label{E--(3.44)}
\left\{
\begin{array}{ll}
\displaystyle{\cosh\left(\frac{3\epsilon_{1,n}}{2}\right)=1+\frac{9\epsilon_{1,n}^2}{8}+O(\epsilon_{1,n}^4)},\ 
\displaystyle{\sinh\left(\frac{3\epsilon_{1,n}}{2}\right)=\frac{3\epsilon_{1,n}}{2}+O(\epsilon_{1,n}^3) },\\ \noalign{\medskip}
\displaystyle{\cosh\left(\frac{\epsilon_{1,n}}{2}\right)=1+\frac{\epsilon_{1,n}^2}{8}+O(\epsilon_{1,n}^4)},\
\displaystyle{\sinh\left(\frac{\epsilon_{1,n}}{2}\right)=\frac{\epsilon_{1,n}}{2}+O(\epsilon_{1,n}^3) }.
\end{array}
\right.
\end{equation}
Inserting \eqref{E--(3.44)} in \eqref{E--(3.42)}, we get
%%%%%%%%%%%%%%%%%%%%Equation%%%%%%%%%%%%%%%%%%%%%%
\begin{equation}\label{E--(3.45)}
\left\{\begin{array}{ll}

\displaystyle{\cosh\left(\frac{3\lambda_{1,n}}{2}\right)=(-1)^n+\frac{9(-1)^n\, \epsilon_{1,n}}{8}+O(\epsilon_{1,n}^4),\  \sinh\left(\frac{3\lambda_{1,n}}{2}\right)=\frac{3(-1)^n\, \epsilon_{1,n}}{2}+O(\epsilon_{1,n}^3) ,}

\\ \noalign{\medskip}

\displaystyle{\cosh\left(\frac{\lambda_{1,n}}{2}\right)=(-1)^n+\frac{(-1)^n\, \epsilon_{1,n}}{8}+O(\epsilon_{1,n}^4),\  \sinh\left(\frac{\lambda_{1,n}}{2}\right)=\frac{(-1)^n\, \epsilon_{1,n}}{2}+O(\epsilon_{1,n}^3) .}

\end{array}\right.
\end{equation}
Inserting \eqref{E--(3.43)} and \eqref{E--(3.45)} in \eqref{E--(3.29)}, we get
\begin{equation}\label{E--(3.46)}
\begin{array}{ll}

\displaystyle{\frac{\epsilon_{1,n }}{2}\left(3+\cos\left(\frac{c}{2}\right)\right)+\frac{\left(1-i\sign(n)\right)\left(1-\cos\left(\frac{c}{2}\right)\right)}{2\sqrt{\pi\, |n|}}+\frac{i\, c\left(4\sin\left(\frac{c}{2}\right)-c\right)}{16\pi n}}

\\ \noalign{\medskip}

\hspace{1cm}\displaystyle{+\frac{\left(1+i\sign(n)\right)\left(1-\cos\left(\frac{c}{2}\right)\right)}{8\sqrt{\pi^3\, |n|^3}} +\frac{8 c \st+\left(1+\ct\right)c^2}{16\pi^2n^2}  }
\\ \noalign{\medskip}

\hspace{1cm}\displaystyle{
+O\left(|n|^{-5/2}\right)+O\left(\epsilon_{1,n }\, |n|^{-3/2}\right)+O\left(\epsilon_{1,n }^2\, |n|^{-1/2}\right)+O\left(\epsilon_{1,n }^3\right)=0}.

\end{array}
\end{equation}
We distinguish two cases:\\[0.1in]
%%%%%%%%%%%%%%%%%%%%%%Case%%%%%%%%%%%%%%%%%%%%%%%% 
\textbf{Case 1.} If $\sin\left(\frac{c}{4}\right)\neq0,$ then
%%%%%%%%%%%%%%%%%%%%Equation%%%%%%%%%%%%%%%%%%%%%%
\begin{equation*}
1-\cos\left(\frac{c}{2}\right)=2\sin^2\left(\frac{c}{4}\right)\neq0,
\end{equation*}
therefore, from \eqref{E--(3.46)}, we get
%%%%%%%%%%%%%%%%%%%%Equation%%%%%%%%%%%%%%%%%%%%%%
\begin{equation*}
\frac{\epsilon_{1,n}}{2}\left(3+ \cos\left(\frac{c}{2}\right)\right)+ \frac{\sin^2\left(\frac{c}{4}\right)\left(1-i \sign\left(n\right)\right)}{\sqrt{|n|\pi}}+O\left(\epsilon_{1,n}^3\right)+O\left(|n|^{-1/2}\,\epsilon_{1,n}^2\right)+O\left(n^{-1}\right)=0,
\end{equation*}
hence, we get
%%%%%%%%%%%%%%%%%%%%Equation%%%%%%%%%%%%%%%%%%%%%%
\begin{equation}\label{E--(3.47)}
\epsilon_{1,n}=- \frac{2\sin^2\left(\frac{c}{4}\right)\left(1-i \sign\left(n\right)\right)}{\left(3+ \cos\left(\frac{c}{2}\right)\right)\sqrt{|n|\pi}}+O\left(n^{-1}\right).
\end{equation}
Inserting \eqref{E--(3.47)} in \eqref{E--(3.38)}, we get \eqref{E--(3.23)} and \eqref{E--(3.25)}.\\[0.1in]
%%%%%%%%%%%%%%%%%%%%%%Case%%%%%%%%%%%%%%%%%%%%%%%%
\textbf{Case 2.} If $\sin\left(\frac{c}{4}\right)=0,$ then
%%%%%%%%%%%%%%%%%%%%Equation%%%%%%%%%%%%%%%%%%%%%%
\begin{equation*}
1-\cos\left(\frac{c}{2}\right)=2\sin^2\left(\frac{c}{4}\right)=0,\ \sin\left(\frac{c}{2}\right)=2\sin\left(\frac{c}{4}\right)\cos\left(\frac{c}{4}\right)=0,
\end{equation*}
therefore, from \eqref{E--(3.46)}, we get
%%%%%%%%%%%%%%%%%%%%Equation%%%%%%%%%%%%%%%%%%%%%%
\begin{equation}\label{E--(3.48)}
2{\epsilon_{1,n }}-\frac{i\, c^2}{16\pi n} +\frac{c^2}{8\pi^2n^2} 
+O\left(|n|^{-5/2}\right)+O\left(\epsilon_{1,n }\, |n|^{-3/2}\right)+O\left(\epsilon_{1,n }^2\, |n|^{-1/2}\right)+O\left(\epsilon_{1,n }^3\right)=0.
\end{equation} 
Solving Equation  \eqref{E--(3.48)}, we get
%%%%%%%%%%%%%%%%%%%%Equation%%%%%%%%%%%%%%%%%%%%%%
\begin{equation}\label{E--(3.49)}
\epsilon_{1,n}=\frac{i\, c^2}{32\pi n}-\frac{c^2}{16\pi^2 n^2}+O\left(|n|^{-5/2}\right).
\end{equation}
Inserting \eqref{E--(3.49)} in \eqref{E--(3.38)}, we get \eqref{E--(3.27)}.\\[0.1in]
%%%%%%%%%%%%%%%%%%%%%%Step%%%%%%%%%%%%%%%%%%%%%%%%
\textbf{Step 2.} Calculation of $\epsilon_{2,n}$. We distinguish three cases:\\[0.1in]
%%%%%%%%%%%%%%%%%%%%%%Case%%%%%%%%%%%%%%%%%%%%%%%%
\textbf{Case 1.} If $\sf\neq 0$ and $\cf\neq 0$, then 
$0<\cos^2\left(\frac{c}{4}\right)< 1.$ Therefore
%%%%%%%%%%%%%%%%%%%%Equation%%%%%%%%%%%%%%%%%%%%%%
\begin{equation*}
\zeta:=\arccos\left(\cos^2\left(\frac{c}{4}\right)\right)\in \left(0,\frac{\pi}{2}\right).
\end{equation*}
From \eqref{E--(3.39)}, we have
%%%%%%%%%%%%%%%%%%%%Equation%%%%%%%%%%%%%%%%%%%%%%
\begin{equation}\label{E--(3.50)}
\frac{1}{\sqrt{\lambda_{2,n}}}=\frac{1-i\sign(n)}{2\sqrt{\pi |n|}}+O\left(|n|^{-3/2}\right)\ \ \ \text{and} \ \ \ \frac{1}{{\lambda_{2,n}}}=O(n^{-1}).
\end{equation}
Inserting \eqref{E--(3.39)} and \eqref{E--(3.50)} in \eqref{E--(3.29)}, we get
%%%%%%%%%%%%%%%%%%%%Equation%%%%%%%%%%%%%%%%%%%%%%
\begin{equation}\label{E--(3.51)}
\begin{array}{ll}
\displaystyle{
2\sinh\left(\frac{\lambda_{2,n}}{2}\right) \left( \cosh\left(\lambda_{2,n}\right) +\cos^2\left(\frac{c}{4}\right)\right)}\\ \noalign{\medskip}

\hspace{1.2cm}\displaystyle{+\frac{\cosh\left(\frac{\lambda_{2,n}}{2}\right) \left( \cosh\left(\lambda_{2,n}\right) -\cos^2\left(\frac{c}{4}\right)\right)\left(1-i\sign(n)\right)}{\sqrt{\pi |n|}}+O(n^{-1})=0.}
\end{array}
\end{equation}
From  \eqref{E--(3.39)}, we obtain 
%%%%%%%%%%%%%%%%%%%%Equation%%%%%%%%%%%%%%%%%%%%%%
\begin{equation}\label{E--(3.52)}
\left\{
\begin{array}{ll}
\displaystyle{\cosh(\lambda_{2,n}) =-\cos^2\left(\frac{c}{4}\right)\cosh\left(\epsilon_{2,n}\right)-i\sin\left( \zeta \right)\sinh\left(\epsilon_{2,n}\right),}\\ \noalign{\medskip}
\displaystyle{\cosh\left(\frac{\lambda_{2,n}}{2}\right) =(-1)^n\, \left(
-\sin\left(\frac{\zeta }{2}\right)\cosh\left(\frac{\epsilon_{2,n}}{2}\right)+i\, \cos\left(\frac{\zeta }{2}\right)\sinh\left(\frac{\epsilon_{2,n}}{2}\right)\right),}\\ \noalign{\medskip}

\\ \noalign{\medskip}
\displaystyle{\sinh\left(\frac{\lambda_{2,n}}{2}\right) =(-1)^n\, \left(
-\sin\left(\frac{\zeta }{2}\right)\sinh\left(\frac{\epsilon_{2,n}}{2}\right)+i\, \cos\left(\frac{\zeta }{2}\right)\cosh\left(\frac{\epsilon_{2,n}}{2}\right)\right).}

\end{array}\right.
\end{equation}
Since $\zeta=\arccos\left(\cos^2\left(\frac{c}{4}\right)\right)\in \left(0,\frac{\pi}{2}\right)$, we have
%%%%%%%%%%%%%%%%%%%%Equation%%%%%%%%%%%%%%%%%%%%%%
\begin{equation}\label{E--(3.53)}
\sin\left( \zeta \right)=\left|\sin\left(\frac{c}{4}\right)\right|\sqrt{1+\cos^2\left(\frac{c}{4}\right)},\ \cos\left( \frac{\zeta}{2} \right)=\frac{\sqrt{1+\cos^2\left(\frac{c}{4}\right)}}{\sqrt{2}}
,\ \sin\left( \frac{\zeta}{2} \right)=\frac{\left|\sin\left(\frac{c}{4}\right)\right|}{\sqrt{2}}.
\end{equation}
On the other hand, since $\lim_{|n|\to+\infty}\epsilon_{2,n}=0$, we have the asymptotic expansion
%%%%%%%%%%%%%%%%%%%%Equation%%%%%%%%%%%%%%%%%%%%%%
\begin{equation}\label{E--(3.54)}
\left\{
\begin{array}{ll}
\displaystyle{
\cosh\left(\epsilon_{2,n}\right)=1+O(\epsilon_{2,n}^2),\
\sinh\left(\epsilon_{2,n}\right)=\epsilon_{2,n}+O(\epsilon_{2,n}^3),}
\\ \noalign{\medskip}
\displaystyle{
\cosh\left(\frac{\epsilon_{2,n}}{2}\right)=1+O(\epsilon_{2,n}^2),\
\sinh\left(\frac{\epsilon_{2,n}}{2}\right)=\frac{\epsilon_{2,n}}{2}+O(\epsilon_{2,n}^3).}
\end{array}\right.
\end{equation}
Inserting \eqref{E--(3.53)} and \eqref{E--(3.54)} in \eqref{E--(3.52)}, we get
%%%%%%%%%%%%%%%%%%%%Equation%%%%%%%%%%%%%%%%%%%%%%
\begin{equation}\label{E--(3.55)}
\left\{
\begin{array}{ll}
\displaystyle{\cosh(\lambda_{2,n}) =-\cos^2\left(\frac{c}{4}\right)-i\, \epsilon_{2,n} \,\left|\sin\left(\frac{c}{4}\right)\right|\sqrt{1+\cos^2\left(\frac{c}{4}\right)}+O(\epsilon_{2,n}^2),}\\ \noalign{\medskip}
\displaystyle{\cosh\left(\frac{\lambda_{2,n}}{2}\right) =\frac{(-1)^n}{\sqrt{2}}\, \left(
\frac{i\, \epsilon_{2,n}\, \sqrt{1+\cos^2\left(\frac{c}{4}\right)}}{2}-\left|\sin\left(\frac{c}{4}\right)\right|\right) +O(\epsilon_{2,n}^2),}\\ \noalign{\medskip}

\\ \noalign{\medskip}
\displaystyle{\sinh\left(\frac{\lambda_{2,n}}{2}\right) =-\frac{(-1)^n}{2\sqrt{2}}\, \left(
\left|\sin\left(\frac{c}{4}\right)\right|\, \epsilon_{2,n}-2i\, \sqrt{1+\cos^2\left(\frac{c}{4}\right)}\right) +O(\epsilon_{2,n}^2).}\\ \noalign{\medskip}

\end{array}\right.
\end{equation}
Inserting \eqref{E--(3.55)} in \eqref{E--(3.51)}, we get
%%%%%%%%%%%%%%%%%%%%Equation%%%%%%%%%%%%%%%%%%%%%%
\begin{equation*}\begin{array}{ll}
\displaystyle{
\sqrt{2}\, (-1)^n \,  \left|\sin\left(\frac{c}{4}\right)\right|\,  \left(1+\cos^2\left(\frac{c}{4}\right)\right)\,  \left(\epsilon_{2,n}+\frac{ \cos^2\left(\frac{c}{4}\right)\left(1-i\sign(n)\right)}{ \left(1+\cos^2\left(\frac{c}{4}\right)\right)\, \sqrt{\pi |n|}}
 \right)

  }\\ \noalign{\medskip}

\hspace{1.2cm}\displaystyle{+O(n^{-1})+O(\epsilon_{2,n}^2)+O\left(|n|^{-1/2}\,\epsilon_{2,n}\right)=0,}
\end{array}
\end{equation*}
since in this case $\sf\neq 0$, then we get
%%%%%%%%%%%%%%%%%%%%Equation%%%%%%%%%%%%%%%%%%%%%%
\begin{equation}\label{E--(3.56)}
\epsilon_{2,n}=-\frac{ \cos^2\left(\frac{c}{4}\right)\left(1-i\sign(n)\right)}{ \left(1+\cos^2\left(\frac{c}{4}\right)\right)\, \sqrt{\pi |n|}}+O(n^{-1}).
\end{equation}
Inserting \eqref{E--(3.56)} in \eqref{E--(3.46)},  we get \eqref{E--(3.24)}.\\[0.1in]
%%%%%%%%%%%%%%%%%%%%%%Case%%%%%%%%%%%%%%%%%%%%%%%%
\textbf{Case 2.} If  $\cf= 0$,  then
%%%%%%%%%%%%%%%%%%%%Equation%%%%%%%%%%%%%%%%%%%%%%
\begin{equation}\label{E--(3.57)}
\cos\left(\frac{c}{2}\right)=-1,\ \sin\left(\frac{c}{2}\right)=0.
\end{equation}
In this case $\lambda_{2,n}$ becomes
%%%%%%%%%%%%%%%%%%%%Equation%%%%%%%%%%%%%%%%%%%%%%
\begin{equation}\label{E--(3.58)}
\lambda_{2,n}=2i n\pi+\frac{3\pi\, i}{2}+\epsilon_{2,n}.
\end{equation}
Therefore, we have
%%%%%%%%%%%%%%%%%%%%Equation%%%%%%%%%%%%%%%%%%%%%%
\begin{equation}\label{E--(3.59)}
\left\{
\begin{array}{ll}
\displaystyle{\cosh\left(\frac{3\lambda_{2,n}}{2}\right)=\frac{(-1)^n}{\sqrt{2}}\, \left(\cosh\left(\frac{3\epsilon_{2,n}}{2}\right)+i\sinh\left(\frac{3\epsilon_{2,n}}{2}\right)\right),}

\\ \noalign{\medskip}

\displaystyle{

\sinh\left(\frac{3\lambda_{2,n}}{2}\right)=\frac{(-1)^n}{\sqrt{2}}\, \left(i\cosh\left(\frac{3\epsilon_{2,n}}{2}\right)+\sinh\left(\frac{3\epsilon_{2,n}}{2}\right)\right),}

\\ \noalign{\medskip}
\displaystyle{\cosh\left(\frac{\lambda_{2,n}}{2}\right)=\frac{(-1)^n}{\sqrt{2}}\, \left(-\cosh\left(\frac{\epsilon_{2,n}}{2}\right)+i\sinh\left(\frac{\epsilon_{2,n}}{2}\right)\right),}

\\ \noalign{\medskip}

\displaystyle{

\sinh\left(\frac{\lambda_{2,n}}{2}\right)=\frac{(-1)^n}{\sqrt{2}}\, \left(i\cosh\left(\frac{\epsilon_{2,n}}{2}\right)-\sinh\left(\frac{\epsilon_{2,n}}{2}\right)\right).}
\end{array}\right.
\end{equation}
On the other hand, since $\lim_{|n|\to+\infty}\epsilon_{2,n}=0$, we have the asymptotic expansion
%%%%%%%%%%%%%%%%%%%%Equation%%%%%%%%%%%%%%%%%%%%%%
\begin{equation}\label{E--(3.60)}
\left\{
\begin{array}{ll}
\displaystyle{\cosh\left(\frac{3\epsilon_{2,n}}{2}\right)=1+\frac{9\epsilon_{2,n}^2}{8}+O(\epsilon_{2,n}^4)},\ 
\displaystyle{\sinh\left(\frac{3\epsilon_{2,n}}{2}\right)=\frac{3\epsilon_{2,n}}{2}+O(\epsilon_{2,n}^3) },\\ \noalign{\medskip}
\displaystyle{\cosh\left(\frac{\epsilon_{2,n}}{2}\right)=1+\frac{\epsilon_{2,n}^2}{8}+O(\epsilon_{2,n}^4)},\
\displaystyle{\sinh\left(\frac{\epsilon_{2,n}}{2}\right)=\frac{\epsilon_{2,n}}{2}+O(\epsilon_{2,n}^3) }.
\end{array}
\right.
\end{equation}
Inserting \eqref{E--(3.60)} in \eqref{E--(3.59)}, we get
%%%%%%%%%%%%%%%%%%%%Equation%%%%%%%%%%%%%%%%%%%%%%
\begin{equation}\label{E--(3.61)}
\left\{
\begin{array}{ll}
\displaystyle{\cosh\left(\frac{3\lambda_{2,n}}{2}\right)=\frac{(-1)^n}{\sqrt{2}}\, \left(1+ \frac{3i\, \epsilon_{2,n}}{2}+\frac{9\epsilon_{2,n}^2}{8}+O(\epsilon_{2,n}^3)\right),}

\\ \noalign{\medskip}

\displaystyle{

\sinh\left(\frac{3\lambda_{2,n}}{2}\right)=\frac{(-1)^n}{\sqrt{2}}\, \left(i+ \frac{3\, \epsilon_{2,n}}{2}+\frac{9i\, \epsilon_{2,n}^2}{8}+O(\epsilon_{2,n}^3)\right),}

\\ \noalign{\medskip}
\displaystyle{\cosh\left(\frac{\lambda_{2,n}}{2}\right)=\frac{(-1)^n}{\sqrt{2}}\, \left(-1+ \frac{i\, \epsilon_{2,n}}{2}-\frac{\epsilon_{2,n}^2}{8}+O(\epsilon_{2,n}^3)\right),}

\\ \noalign{\medskip}

\displaystyle{

\sinh\left(\frac{\lambda_{2,n}}{2}\right)=\frac{(-1)^n}{\sqrt{2}}\, \left(i- \frac{ \epsilon_{2,n}}{2}+\frac{i\, \epsilon_{2,n}^2}{8}+O(\epsilon_{2,n}^3)\right).}
\end{array}\right.
\end{equation}
Moreover, from \eqref{E--(3.58)}, we get
%%%%%%%%%%%%%%%%%%%%Equation%%%%%%%%%%%%%%%%%%%%%%
\begin{equation}\label{E--(3.62)}
\left\{
\begin{array}{ll}
\displaystyle{\frac{1}{\lambda_{2,n}}= -\frac{i}{2\pi n}+\frac{3 i \pi}{8\pi^2 n^2}+O\left(\epsilon_{2,n}\, n^{-2}\right)+O\left(n^{-3}\right),\ \frac{1}{\lambda^2_{2,n}}= -\frac{1}{4\pi^2 n^2}+O\left(n^{-3}\right)},

 \\ \noalign{\medskip}
\displaystyle{\frac{1}{\sqrt{\lambda_{2,n}}}=\frac{1-i\sign(n)}{2\sqrt{\pi |n|}} +\frac{3\, (-\sign(n)+i)}{16\sqrt{\pi |n|^3}} +O\left(\epsilon_{2,n}\, |n|^{-3/2}\right)+O\left(|n|^{-5/2}\right)},

 \\ \noalign{\medskip}

\displaystyle{\frac{1}{\sqrt{\lambda^3_{2,n}}}=\frac{-1-i\sign(n)}{4\sqrt{\pi^3 |n|^3}}+O\left(|n|^{-5/2}\right),\ \frac{1}{\sqrt{\lambda^5_{2,n}}}=O\left(|n|^{-5/2}\right)}.

\end{array}\right.
\end{equation}
Inserting \eqref{E--(3.57)},  \eqref{E--(3.61)},  and \eqref{E--(3.62)} in \eqref{E--(3.29)}, we get
%%%%%%%%%%%%%%%%%%%%Equation%%%%%%%%%%%%%%%%%%%%%%
\begin{equation}\label{E--(3.63)}
\begin{array}{ll}

\displaystyle{\frac{i\, \epsilon_{2,n}^2}{2}+\left(1+\frac{\sign(n)+i}{2\sqrt{\pi\, |n|}}+\frac{3 c^2}{64\pi n}\right)\, \epsilon_{2,n}-\frac{i\, c^2}{32\pi n}+\frac{(\sign(n)-i)\, c^2}{64\sqrt{\pi^3|n|^3}}+\frac{\left(64-i\left(c^2-24\pi+16\right)\right)\, c^2}{1024 \pi^2 n^2}}
\\ \noalign{\medskip}

\hspace{1cm}\displaystyle{
+O\left(|n|^{-5/2}\right)+O\left(\epsilon_{2,n }\, |n|^{-3/2}\right)+O\left(\epsilon_{2,n }^2\, |n|^{-1/2}\right)+O\left(\epsilon_{2,n }^3\right)=0}.

\end{array}
\end{equation}
From \eqref{E--(3.63)}, we get
%%%%%%%%%%%%%%%%%%%%Equation%%%%%%%%%%%%%%%%%%%%%%
\begin{equation*}
\epsilon_{2,n}-\frac{i\, c^2}{32\pi n}+O\left(\epsilon_{2,n }\, |n|^{-1/2}\right)+O\left(\epsilon_{2,n }^2\right)=0,
\end{equation*}
hence 
%%%%%%%%%%%%%%%%%%%%Equation%%%%%%%%%%%%%%%%%%%%%%
\begin{equation}\label{E--(3.64)}
\epsilon_{2,n}=\frac{i\, c^2}{32\pi n}+\frac{\xi_{n}}{n}, \quad \text{such that }\lim_{|n|\to+\infty}\xi_{n}=0.
\end{equation}
Inserting \eqref{E--(3.64)} in \eqref{E--(3.63)}, we get
%%%%%%%%%%%%%%%%%%%%Equation%%%%%%%%%%%%%%%%%%%%%%
\begin{equation*}
\frac{\xi_{n}}{n}+\frac{\left(8+i\, (3\pi-2)\right)\, c^2 }{128\pi^2 n^2}+O\left(\xi_n\, |n|^{-3/2}\right)+O\left(|n|^{-5/2}\right)=0,
\end{equation*}
therefore
%%%%%%%%%%%%%%%%%%%%Equation%%%%%%%%%%%%%%%%%%%%%%
\begin{equation}\label{E--(3.65)}
\xi_{n}=-\frac{\left(8+i\, (3\pi-2)\right)\, c^2 }{128\pi^2 n}+O(n^{-3/2}).
\end{equation}
Inserting \eqref{E--(3.64)} in \eqref{E--(3.65)}, we get
%%%%%%%%%%%%%%%%%%%%Equation%%%%%%%%%%%%%%%%%%%%%%
\begin{equation}\label{E--(3.66)}
\epsilon_{2,n}=\frac{i\, c^2}{32\pi n}-\frac{\left(8+i\, (3\pi-2)\right)\, c^2 }{128\pi^2 n^2}+O(n^{-5/2}).
\end{equation}
Finally, inserting \eqref{E--(3.66)} in \eqref{E--(3.58)}, we get \eqref{E--(3.26)}.\\[0.1in]
%%%%%%%%%%%%%%%%%%%%%%Case%%%%%%%%%%%%%%%%%%%%%%%%
\textbf{Case 3.} If  $\sf= 0$,  then
%%%%%%%%%%%%%%%%%%%%Equation%%%%%%%%%%%%%%%%%%%%%%
\begin{equation}\label{E--(3.67)}
\cos\left(\frac{c}{2}\right)=1,\ \sin\left(\frac{c}{2}\right)=0.
\end{equation}
In this case $\lambda_{2,n}$ becomes 
%%%%%%%%%%%%%%%%%%%%Equation%%%%%%%%%%%%%%%%%%%%%%
\begin{equation}\label{E--(3.68)}
\lambda_{2,n}=2i n\pi+i\,\pi +\epsilon_{2,n}.
\end{equation}
Similar to case 2, from \eqref{E--(3.68)} and using the fact that $\lim_{|n|\to+\infty}\epsilon_{2,n}=0$, we have the asymptotic expansion
%%%%%%%%%%%%%%%%%%%%Equation%%%%%%%%%%%%%%%%%%%%%%
\begin{equation}\label{E--(3.69)}
\left\{
\begin{array}{ll}
\displaystyle{\cosh\left(\frac{3\lambda_{2,n}}{2}\right)=-\frac{3i\, (-1)^n\, \epsilon_{2,n}}{2}+O\left(\epsilon_{2,n}^3\right),}\

\displaystyle{

\sinh\left(\frac{3\lambda_{2,n}}{2}\right)=-i\, (-1)^n\, \left(1+\frac{9 \epsilon_{2,n}^2}{8}\right)+O(\epsilon_{2,n}^4),}

\\ \noalign{\medskip}
\displaystyle{\cosh\left(\frac{3\lambda_{2,n}}{2}\right)=\frac{i\, (-1)^n\, \epsilon_{2,n}}{2}+O\left(\epsilon_{2,n}^3\right),}\

\displaystyle{

\sinh\left(\frac{3\lambda_{2,n}}{2}\right)=i\, (-1)^n\, \left(1+\frac{ \epsilon_{2,n}^2}{8}\right)+O(\epsilon_{2,n}^4).}
\end{array}\right.
\end{equation}
Moreover, from \eqref{E--(3.68)}, we get
%%%%%%%%%%%%%%%%%%%%Equation%%%%%%%%%%%%%%%%%%%%%%
\begin{equation}\label{E--(3.70)}
\left\{
\begin{array}{ll}
\displaystyle{\frac{1}{\lambda_{2,n}}= -\frac{i}{2\pi n}+\frac{ i\, \pi}{4\pi^2 n^2}+O\left(\epsilon_{2,n}\, n^{-2}\right)+O\left(n^{-3}\right),\ \frac{1}{\lambda^2_{2,n}}= -\frac{1}{4\pi^2 n^2}+O\left(n^{-3}\right),}

 \\ \noalign{\medskip}
\displaystyle{\frac{1}{\sqrt{\lambda_{2,n}}}=\frac{1-i\sign(n)}{2\sqrt{\pi |n|}} +\frac{ (1+i\,\sign(n))\, \epsilon_{2,n}+ (-\sign(n)+i)\,\pi }{8\sqrt{\pi |n|^3}}}

 \\ \noalign{\medskip}
 
 \hspace{1.5cm} \displaystyle{+\frac{3\, (1-i\sign(n))}{64\sqrt{\pi |n|^5}}+O\left(\epsilon_{2,n}\, |n|^{-5/2}\right)+O\left(|n|^{-7/2}\right)},

 \\ \noalign{\medskip}

\displaystyle{\frac{1}{\sqrt{\lambda^3_{2,n}}}=\frac{-1-i\sign(n)}{4\sqrt{\pi^3 |n|^3}}+\frac{3\, (\sign(n)+i)}{16\sqrt{\pi^3 |n|^5}}+O\left(\epsilon_{2,n}\, |n|^{-5/2}\right)+O\left(|n|^{-7/2}\right)},

 \\ \noalign{\medskip}

\displaystyle{\frac{1}{\sqrt{\lambda^5_{2,n}}}=\frac{-1+i\sign(n)}{8\sqrt{\pi^5 |n|^5}}+O\left(|n|^{-7/2}\right),\ \ \frac{1}{\lambda_{2,n}^3}=O\left(n^{-3}\right)}.

\end{array}\right.
\end{equation}
Inserting \eqref{E--(3.67)},  \eqref{E--(3.69)},  and \eqref{E--(3.70)} in \eqref{E--(3.29)}, we get
%%%%%%%%%%%%%%%%%%%%Equation%%%%%%%%%%%%%%%%%%%%%%
\begin{equation}\label{E--(3.71)}
\begin{array}{ll}

\displaystyle{-i\, \epsilon_{2,n}^2+\left(-\frac{ \sign(n)+i}{\sqrt{\pi|n|}}-\frac{3 c^2}{32\pi n}+\frac{\sign(n)-i+(1+i\sign(n))\, \pi}{4\sqrt{\pi^3|n|^3}}\right)\, \epsilon_{2,n}-\frac{(\sign(n)-i)\, c^2}{32\sqrt{\pi^3|n|^3}}}
\\ \noalign{\medskip}

\displaystyle{+\frac{i\, c^4}{512\pi^2 n^2}-\frac{3\left(3 (\sign(n)+i)-(1-i \sign(n))\, \pi\right) \, c^2}{128\sqrt{\pi^5|n|^5}}}
\\ \noalign{\medskip}

\displaystyle{
+O\left(n^{-3}\right)+O\left(\epsilon_{2,n }\, n^{-2}\right)+O\left(\epsilon_{2,n }^2\, n^{-1}\right)+O\left(\epsilon_{2,n }^3\right)=0}.

\end{array}
\end{equation}
Similar to case 2, solving Equation \eqref{E--(3.71)}, we get
%%%%%%%%%%%%%%%%%%%%Equation%%%%%%%%%%%%%%%%%%%%%%
\begin{equation}\label{E--(3.72)}
\epsilon_{2,n}=\frac{i \, c^2}{32\pi n}-\frac{\left(4+i\pi\right)\, c^2}{64\pi^2 n^2}+O\left(|n|^{-5/2}\right).
\end{equation}
Finally, inserting \eqref{E--(3.72)} in \eqref{E--(3.68)}, we get \eqref{E--(3.28)}. Thus, the proof is complete.	\xqed{$\square$}\\[0.1in]
%%%%%%%%%%%%%%%%%%%%%%%%%%%%%%%%%%%%%%%%%%%%%%%%%%
                % Proof of  Theorem  %
%%%%%%%%%%%%%%%%%%%%%%%%%%%%%%%%%%%%%%%%%%%%%%%%%%
 \noindent\textbf{Proof of  Theorem \ref{Theorem--3.2}.} From Proposition \ref{Proposition--3.3} the operator $\mathcal{A}_2$ has two branches of eigenvalues with eigenvalues admitting real parts tending to zero. Hence, the energy corresponding to the first and second branch of eigenvalues  has no exponential decaying. Therefore the total energy of the  Timoshenko System \eqref{E--(1.1)}-\eqref{E--(1.2)} with  local Kelvin–Voigt damping, and with Dirichlet-Neumann boundary conditions \eqref{E--(1.4)},  has no exponential decaying in the  equal speed case.\xqed{$\square$}
%%%%%%%%%%%%%%%%%%%%%%%%%%%%%%%%%%%%%%%%%%%%%%%%%%
%%%%%%%%%%%%%%%%%%%%%%%%%%%%%%%%%%%%%%%%%%%%%%%%%%
%%%%%%%%%%%%%%%%%%%%%%%%%%%%%%%%%%%%%%%%%%%%%%%%%%
 % Section 4: Polynomial stability %
%%%%%%%%%%%%%%%%%%%%%%%%%%%%%%%%%%%%%%%%%%%%%%%%%%
%%%%%%%%%%%%%%%%%%%%%%%%%%%%%%%%%%%%%%%%%%%%%%%%%%	
%%%%%%%%%%%%%%%%%%%%%%%%%%%%%%%%%%%%%%%%%%%%%%%%%%
\section{Polynomial stability}\label{Section--4}
\noindent  In this section,  we use the  frequency domain approach method to show the polynomial  stability of $\left(e^{t\mathcal{A}_j}\right)_{t\geq0}$ associated with the Timoshenko System  \eqref{E--(2.1)}.  We prove the following theorem.
%%%%%%%%%%%%%%%%%%%%%%%%%%%%%%%%%%%%%%%%%%%%%%%%%%
                % Theorem %
%%%%%%%%%%%%%%%%%%%%%%%%%%%%%%%%%%%%%%%%%%%%%%%%%%
\begin{thm}\label{Theorem--4.1}
\rm{ Under hypothesis {\rm (H)}, for $j=1,2,$  there exists  $C>0$ such that for every $U_0\in D\left(\mathcal{A}_j\right)$, we have
%%%%%%%%%%%%%%%%%%%%Equation%%%%%%%%%%%%%%%%%%%%%%
\begin{equation}\label{E--(4.1)}
	E\left(t\right)\leq \frac{C}{t}\left\|U_0\right\|^2_{D\left(\mathcal{A}_j\right)},\quad\ t>0.
\end{equation}}
\end{thm}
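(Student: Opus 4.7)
By Theorem \ref{Theorem--2.5}(i) applied with $\ell=2$, it suffices to establish the resolvent bound
\begin{equation*}
\limsup_{|\lambda|\to\infty,\ \lambda\in\mathbb{R}} |\lambda|^{-2}\bigl\|(i\lambda I-\mathcal{A}_j)^{-1}\bigr\|_{\mathcal{L}(\mathcal{H}_j)}<+\infty,
\end{equation*}
which yields \eqref{E--(4.1)} since $E(t)=\tfrac12\|e^{t\mathcal{A}_j}U_0\|_{\mathcal{H}_j}^2$. Note that $i\mathbb{R}\subset\rho(\mathcal{A}_j)$ is already available via Lemmas \ref{Lemma--2.7} and \ref{Lemma--2.8}. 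I shall argue by contradiction along the now standard frequency-domain scheme: assume there exist sequences $\lambda_n\in\mathbb{R}$, $|\lambda_n|\to\infty$, and $U_n=(u_n,v_n,y_n,z_n)\in D(\mathcal{A}_j)$ with $\|U_n\|_{\mathcal{H}_j}=1$ such that
\begin{equation*}
F_n=(f_1^n,f_2^n,f_3^n,f_4^n):=\lambda_n^{2}(i\lambda_n I-\mathcal{A}_j)U_n\longrightarrow 0\ \text{in}\ \mathcal{H}_j,
\end{equation*}
and derive the contradiction $\|U_n\|_{\mathcal{H}_j}\to 0$.

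The first step is to collect the information coming from dissipation. Taking the real part of $\langle (i\lambda_n I-\mathcal{A}_j)U_n,U_n\rangle_{\mathcal{H}_j}$ and using $\|U_n\|=1$ yields $\int_0^L D(x)|z_{n,x}|^2\,dx = o(\lambda_n^{-2})$. Under \textbf{(H)} this localises as $\|z_{n,x}\|_{L^2(\alpha,\beta)}^2=o(\lambda_n^{-2})$. Writing out the four component equations, the identity $z_n=i\lambda_n y_n-\lambda_n^{-2}f_3^n$ together with the smallness of $(f_3^n)_x$ in $L^2$ gives the stronger information
\begin{equation*}
\|y_{n,x}\|_{L^2(\alpha,\beta)}^2=o(\lambda_n^{-4}),\qquad \|z_n\|_{L^2(\alpha,\beta)}^2=O(\lambda_n^{-2}),
\end{equation*}
and, from $v_n=i\lambda_n u_n-\lambda_n^{-2}f_1^n$, an analogous reduction of $\|u_n\|_{H^1}$-norms to $\|v_n\|_{L^2}$-norms throughout $(0,L)$.

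The heart of the proof is then to propagate the smallness from the effective damping window $(\alpha,\beta)$ to the whole of $(0,L)$ using localised multipliers, much as in the analysis of locally damped wave or Timoshenko equations (cf.\ \cite{Wehbe07, LiuZhang-2016}). Concretely, I would proceed in two stages. First, choose a smooth cut-off $\theta\in C_c^\infty(\alpha,\beta)$ equal to $1$ on a sub-interval; testing the second and fourth equations of the system against $\theta\bar{u}_n$ and $\theta\bar{y}_n$ and integrating by parts absorbs the known smallness of $y_{n,x}$ and $z_{n,x}$ to give interior smallness of $\|v_n\|^2+\|u_{n,x}+y_n\|^2$ on a sub-interval of $(\alpha,\beta)$. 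Second, take a piecewise-smooth real multiplier $q(x)$ with $q(0)=q(L)=0$, chosen so as to isolate the two undamped regions $(0,\alpha)$ and $(\beta,L)$, and test the $u$- and $y$-equations against $q\,\overline{u_{n,x}}$ and $q\,\overline{y_{n,x}}$ respectively. The resulting Pohozaev-type identities, after summing with weights adjusted to the speeds $k_1/\rho_1$ and $k_2/\rho_2$, bound the undamped portion of the energy by the already-controlled interior pieces plus boundary contributions at $x=0,\alpha,\beta,L$. All unwanted cross terms carry a prefactor $\lambda_n^{-1}$ or $\lambda_n^{-2}$ and are absorbed by $\|F_n\|/\lambda_n^2\to 0$. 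Conclusion: $\|U_n\|_{\mathcal{H}_j}\to 0$, the required contradiction.

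The step I expect to be most delicate is the second one, the multiplier propagation, for two entwined reasons. First, the coefficient $D$ is only assumed to be in $L^\infty(0,L)\cap C([\alpha,\beta])$, so the multiplier identities produce interface terms at $x=\alpha$ and $x=\beta$ where one has no smoothness, and these must be neutralised by careful choice of $q$ (for instance piecewise affine with the appropriate jumps) combined with a transmission-type matching using $D z_{n,x}\in H^1$. Second—and this is what limits the rate to $t^{-1}$—there is no direct damping on the transverse displacement $u$; control of $u_n$ must be extracted purely through the coupling term $k_1(u_{n,x}+y_n)$ in the rotation equation, and the associated loss of one power of $\lambda_n$ when inverting $i\lambda_n$ in the first equation $v_n=i\lambda_n u_n-\lambda_n^{-2}f_1^n$ is what forces the $|\lambda|^{2}$ growth in the resolvent bound. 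Exactly the same scheme handles the two cases $j=1$ (fully Dirichlet) and $j=2$ (mixed); only the boundary terms of $q$ and the admissibility of $y_n$ (mean-zero for $j=2$) need to be adapted.
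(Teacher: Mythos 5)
Your overall architecture does match the paper's: the reduction via Theorem \ref{Theorem--2.5}(i) with $\ell=2$, the contradiction setup with $\lambda_n^2(i\lambda_n I-\mathcal{A}_j)U_n\to 0$, the dissipation estimates $\int_0^L D|z_x|^2\,dx=o(\lambda^{-2})$ and $\int_\alpha^\beta|y_x|^2\,dx=o(\lambda^{-4})$, and the final propagation from $(\alpha,\beta)$ to $(0,L)$ by cut-off multipliers are all exactly the paper's scheme (Lemma \ref{Lemma-4.8}). However, there is a genuine gap at the pivot of the argument: your claim $\|z_n\|^2_{L^2(\alpha,\beta)}=O(\lambda_n^{-2})$ does not follow from the identity $z_n=i\lambda_n y_n-\lambda_n^{-2}f_3^n$. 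Since $\|y_n\|_{L^2}=O(\lambda_n^{-1})$ is all that is available at that stage, this identity gives only $\|z_n\|_{L^2(\alpha,\beta)}=O(1)$; and the dissipation controls only $z_x$, so, constants being in the kernel of $\partial_x$, no smallness of $z$ itself can be extracted from $\int_\alpha^\beta|z_x|^2=o(\lambda^{-2})$. This is not a cosmetic issue: in your stage-one interior estimate the coupling term $\lambda\int\theta\, z\,(\overline{u_x+y})\,dx$ is then only $O(\lambda)\cdot\|z\|$, and even after multiplying \eqref{EQ--(4.7)} by $-i\lambda^{-1}\bar z$ and using Young's inequality one is left with $\int\theta|z|^2$ on one side against $\|z\|^2_{L^2(\alpha,\beta)}$ on the other, with a compactly supported $\theta$ --- an absorption that cannot close. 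So your scheme stalls precisely on the damped region, before propagation ever starts.

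The device the paper uses to fill this hole, and which your proposal lacks, is the boundary-evaluation multiplier $g\in C^1([\alpha,\beta])$ with $g(\beta)=-g(\alpha)=1$: it produces pointwise estimates of $z$ and of $y_x+\tfrac{D}{k_2}z_x$ at the interface points $\alpha,\beta$ (Lemma \ref{Lemma-4.4}, estimates \eqref{EQ--(4.11)}--\eqref{EQ--(4.12)}, together with the companion bounds \eqref{EQ--(4.16)}--\eqref{EQ--(4.18)} for $u$, $u_x+y$ and $v$). These endpoint bounds are then fed into the identity obtained by multiplying \eqref{EQ--(4.7)} by $-i\lambda^{-1}\rho_2^{-1}\bar z$ over the \emph{whole} interval $(\alpha,\beta)$, where integration by parts produces exactly those endpoint terms; a quantitative absorption (the margin between the coefficient $\tfrac14+\tfrac{k_2c_{g'}}{\rho_2\lambda^{1/2}}+\tfrac{k_1}{\rho_2\lambda^2}$ in \eqref{EQ--(4.22)} and the $\tfrac34-\cdots$ left over on the left-hand side) yields the crucial estimate $\int_\alpha^\beta|z|^2\,dx=o(\lambda^{-5/2})$ of Lemma \ref{Lemma-4.6}, \eqref{EQ--(4.20)}. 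It is this rate --- strictly better than the $O(\lambda^{-2})$ you posited, and obtained by a mechanism entirely different from the one you invoke --- that makes the coupling terms $o(1)$ in Lemma \ref{Lemma-4.7} and lets the interior energy $\int_\alpha^\beta(|u_x|^2+|v|^2)\,dx$ be shown small, after which your (and the paper's) propagation step works. Your closing diagnosis of where the difficulty lies (non-smooth $D$ at the interface, the loss of one power of $\lambda$ through the undamped $u$-equation) is accurate, but the proposal as written assumes away the hardest estimate rather than proving it.
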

%%%%%%%%%%%%%%%%%%%%%%%%%%%%%%%%%%%%%%%%%%%%%%%%%%
\noindent Since $ \ i\mathbb{R}\subseteq\rho\left(\mathcal{A}_j\right),$ then for the proof of Theorem \ref{Theorem--4.1}, according to Theorem \ref{Theorem--2.5}, we need to prove that
%%%%%%%%%%%%%%%%%%%%Equation%%%%%%%%%%%%%%%%%%%%%%
\begin{equation*}\tag{\rm H3}
\sup_{\lambda\in\mathbb{R}}\left\|\left(i\lambda I-\mathcal{A}_j\right)^{-1}\right\|_{\mathcal{L}\left(\mathcal{H}_j\right)}=O\left(\lambda^{2}\right).
\end{equation*}
We will argue by contradiction. Therefore suppose  there exists $\left\{(\lambda_n,U_n=\left(u_n,v_n,y_n,z_n\right))\right\}_{n\geq 1}\subset \mathbb{R}\times D\left(\mathcal{A}_j\right)$, with $\lambda_n>1$  and 
%%%%%%%%%%%%%%%%%%%%Equation%%%%%%%%%%%%%%%%%%%%%%
\begin{equation}\label{EQ--(4.2)}
\lambda_n\to+\infty,\ \ \|U_n\|_{\mathcal{H}_j}=1,
\end{equation}
such that
%%%%%%%%%%%%%%%%%%%%Equation%%%%%%%%%%%%%%%%%%%%%%
\begin{equation}\label{EQ--(4.3)}
\lambda_n^{2}\left(\ i\lambda_n U_n-\mathcal{A}_jU_n\right)=\left(f_{1,n},f_{2,n},f_{3,n},f_{4,n}\right)\to 0\ \text{ in } \mathcal{H}_j.
\end{equation}
Equivalently, we have 
%%%%%%%%%%%%%%%%%%%%Equation%%%%%%%%%%%%%%%%%%%%%%
\begin{eqnarray}
i{\lambda_n}u_n-  {v_n}&=&\lambda_n^{-2}f_{1,n}\to0\text{ in } H^1_0(0,L),\label{EQ--(4.4)}
\\ \noalign{\medskip}
 i \lambda_n v_n-\frac{k_1}{\rho_1 }((u_n)_x+y_n)_x
&=&\lambda_n^{-2}f_{2,n}\to0\text{ in } L^2(0,L),\label{EQ--(4.5)}
\\ \noalign{\medskip}
 i \lambda_ny_n-{z_n}&=&\lambda_n^{-2}f_{3,n}\to0\text{ in } \mathcal{W}_j(0,L) ,\label{EQ--(4.6)}
\\ \noalign{\medskip}
i{\lambda_n}  z_n- \frac{k_2}{\rho_2}\left((y_n)_{x}+\frac{D}{k_2}(z_n)_x\right)_x+\frac{k_1}{\rho_2 }((u_n)_x+y_n)&=&\lambda_n^{-2}f_{4,n}\to0\text{ in } L^2(0,L),\label{EQ--(4.7)}
\end{eqnarray}
where 
%%%%%%%%%%%%%%%%%%%%Equation%%%%%%%%%%%%%%%%%%%%%%
\begin{equation*}
\mathcal{W}_j(0,L)=\left\{ \begin{array}{ll} \displaystyle{H^1_0(0,L),\quad \text{if }j=1}, \nline \displaystyle{H^1_*(0,L) ,\quad \text{if }j=2}.\end{array}\right.
\end{equation*}
In the following, we will check the condition {\rm (H3)} by finding a contradiction with \eqref{EQ--(4.2)} such as $\left\| U_n\right\|_{\mathcal{H}_j} =o(1).$ For clarity, we divide the proof into several lemmas. From  now on, for simplicity, we drop the index $n$. Since $U$ is uniformly bounded in ${\mathcal{H}},$ we get from \eqref{EQ--(4.4)} and  \eqref{EQ--(4.6)} respectively that
%%%%%%%%%%%%%%%%%%%%Equation%%%%%%%%%%%%%%%%%%%%%%
\begin{equation}\label{EQ--(4.8)}
\int_{0}^{L}|u|^2\, dx=O\left(\lambda^{-2}\right)\ \ \ \text{and}\ \ \   \int_{0}^{L}|y|^2\, dx=O\left(\lambda^{-2}\right),\ 
\end{equation}
%%%%%%%%%%%%%%%%%%%%%%%%%%%%%%%%%%%%%%%%%%%%%%%%%%
                % Lemma %
%%%%%%%%%%%%%%%%%%%%%%%%%%%%%%%%%%%%%%%%%%%%%%%%%%
\begin{lem}\label{Lemma-4.5}
\rm{Under hypothesis {\rm (H)}, for $j=1,2,$   we have
%%%%%%%%%%%%%%%%%%%%Equation%%%%%%%%%%%%%%%%%%%%%%
\begin{eqnarray}
&\displaystyle{\int_{0}^LD(x)\left|z_{x}\right|^2 dx=o\left(\lambda^{-2}\right), \ \int_{\alpha}^\beta\left|z_x\right|^2 dx=o\left(\lambda^{-2}\right)\label{EQ--(4.9)},}\nline
&\displaystyle{\int_{\alpha}^\beta\left|y_{x}\right|^2 dx=o\left(\lambda^{-4}\right).}\label{EQ--(4.10)}
\end{eqnarray}}
\end{lem}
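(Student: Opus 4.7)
The plan is to exploit the dissipation identity for $\mathcal{A}_j$ together with the third component equation \eqref{EQ--(4.6)}, which links $z$ and $y$.

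First, I would take the inner product of \eqref{EQ--(4.3)} with $U$ in $\mathcal{H}_j$ and look at the real part. Since $\mathcal{A}_j$ is $m$-dissipative with $\Re\langle \mathcal{A}_jU,U\rangle_{\mathcal{H}_j} = -\int_0^L D(x)|z_x|^2\,dx$ (as established in the proof of Proposition 2.1), we obtain
\begin{equation*}
\lambda^2 \int_0^L D(x)|z_x|^2\,dx = \Re\langle \lambda^2(i\lambda U - \mathcal{A}_jU), U\rangle_{\mathcal{H}_j} \leq \|(f_{1},f_{2},f_{3},f_{4})\|_{\mathcal{H}_j}\|U\|_{\mathcal{H}_j} = o(1),
\end{equation*}
using \eqref{EQ--(4.2)} and \eqref{EQ--(4.3)}. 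This yields the first estimate of \eqref{EQ--(4.9)}. Then hypothesis {\rm (H)}, which gives $D(x) \geq D_0 > 0$ on $(\alpha,\beta)$, immediately yields the second estimate of \eqref{EQ--(4.9)} via $\int_\alpha^\beta |z_x|^2\,dx \leq D_0^{-1} \int_\alpha^\beta D(x)|z_x|^2\,dx$.

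For \eqref{EQ--(4.10)}, I would differentiate \eqref{EQ--(4.6)} with respect to $x$, which gives $i\lambda y_x - z_x = \lambda^{-2}(f_{3})_x$. Since $f_3\to 0$ in $\mathcal{W}_j(0,L)$, which controls the $L^2$-norm of the derivative, we have $\|(f_3)_x\|_{L^2(0,L)} = o(1)$, hence
\begin{equation*}
\int_\alpha^\beta |i\lambda y_x - z_x|^2\,dx \leq \lambda^{-4} \int_0^L |(f_3)_x|^2\,dx = o(\lambda^{-4}).
\end{equation*}
Combining with $|\lambda y_x|^2 \leq 2|i\lambda y_x - z_x|^2 + 2|z_x|^2$ and integrating on $(\alpha,\beta)$, we obtain
\begin{equation*}
\lambda^2\int_\alpha^\beta |y_x|^2\,dx \leq 2\int_\alpha^\beta |i\lambda y_x - z_x|^2\,dx + 2\int_\alpha^\beta |z_x|^2\,dx = o(\lambda^{-4}) + o(\lambda^{-2}) = o(\lambda^{-2}),
\end{equation*}
which is exactly \eqref{EQ--(4.10)}.

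There is no real obstacle here: the lemma is essentially a direct consequence of the dissipativity identity (which extracts damping information on the interval where $D>0$) combined with the resolvent equation \eqref{EQ--(4.6)} (which transfers the estimate from $z_x$ to $y_x$ at the cost of an extra factor $\lambda^{-2}$). The only subtlety is to verify that $\mathcal{W}_j(0,L)$ controls $(f_3)_x$ in $L^2$, which holds by definition for both $j=1$ (where $\mathcal{W}_1=H^1_0$) and $j=2$ (where $\mathcal{W}_2=H^1_*$, equipped with the $\|(\cdot)_x\|_{L^2}$ norm).
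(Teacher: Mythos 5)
Your proof is correct and follows essentially the same route as the paper: the real part of the inner product of \eqref{EQ--(4.3)} with $U$ combined with the dissipation identity gives the first estimate of \eqref{EQ--(4.9)}, hypothesis {\rm (H)} gives the second, and differentiating \eqref{EQ--(4.6)} transfers the bound from $z_x$ to $y_x$ with the extra factor $\lambda^{-2}$. Your explicit computation for \eqref{EQ--(4.10)}, including the check that $(f_3)_x\to 0$ in $L^2$ because the $\mathcal{H}_j$-norm controls $\left\|(f_3)_x\right\|$, simply spells out what the paper compresses into its one-line citation of \eqref{EQ--(4.3)}, \eqref{EQ--(4.6)}, and \eqref{EQ--(4.9)}.
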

%%%%%%%%%%%%%%%%%%%%%%%%%%%%%%%%%%%%%%%%%%%%%%%%%%
                % Proof of  Lemma  %
%%%%%%%%%%%%%%%%%%%%%%%%%%%%%%%%%%%%%%%%%%%%%%%%%%
\begin{proof} First, taking the inner product of \eqref{EQ--(4.3)} with $U$ in $\mathcal{H}_j$, then using the fact that $U$ is uniformly bounded in $\mathcal{H}_j$, we get
%%%%%%%%%%%%%%%%%%%%Equation%%%%%%%%%%%%%%%%%%%%%%
\begin{equation*}
	\int_{0}^L D(x)\left|z_{x}\right|^2 dx=-\lambda^{-2}\Re\left(\left<\lambda^{2}\mathcal{A}_jU,U\right>_{\mathcal{H}_j}\right)=\lambda^{-2}\Re\left(\left<\lambda^{2}\left( i\lambda U-\mathcal{A}_jU\right),U\right>_{\mathcal{H}_j}\right)=o\left(\lambda^{-2}\right),
\end{equation*}
hence,  we get the first asymptotic estimate of  \eqref{EQ--(4.9)}. Next, using  hypothesis {\rm (H)} and the first asymptotic estimate of  \eqref{EQ--(4.9)}, we get the second  asymptotic estimate of  \eqref{EQ--(4.9)}. Finally, from  \eqref{EQ--(4.3)}, \eqref{EQ--(4.6)}, and \eqref{EQ--(4.9)}, we get the  asymptotic estimate of \eqref{EQ--(4.10)}. 
\end{proof}$\\[0.1in]$
%%%%%%%%%%%%%%%%%%%%%%%%%%%%%%%%%%%%%%%%%%%%%%%%%%
 Let $g \in C^1\left([\alpha,\beta]\right)$ such that 
%%%%%%%%%%%%%%%%%%%%Equation%%%%%%%%%%%%%%%%%%%%%%
\begin{equation*}
g(\beta)=-g(\alpha) = 1,\quad \max \limits_ {x \in [\alpha, \beta]} |g(x)|= c_g\quad \text{and}\quad \max \limits_ {x \in [\alpha, \beta]} |g'(x)| = c_{g'},
\end{equation*}
where $c_g$ and $c_{g'}$ are strictly positive constant numbers.
%%%%%%%%%%%%%%%%%%%%%%%%%%%%%%%%%%%%%%%%%%%%%%%%%%
                % Remark %
%%%%%%%%%%%%%%%%%%%%%%%%%%%%%%%%%%%%%%%%%%%%%%%%%%
\begin{rk}\label{Remark-4.3}
\rm{It is easy to see the existence of $g(x)$. For example, we can take $g(x)=\cos\left(\frac{ (\beta-x)\pi}{\beta-\alpha}\right)$ to get $g(\beta)=-g(\alpha)=1$, $g\in   C^1([\alpha,\beta])$, $|g(x)| \leq 1$ and  $|g'(x)|\leq \frac{\pi}{\beta-\alpha}$.  Also, we can take  
$$g(x)={x}^{2}- \left(\beta+ \alpha-2\, \left( \beta-\alpha \right) ^{-1}
 \right) x+\alpha\,\beta-\left(\beta+\alpha\right) \left(\beta-\alpha\right)^{-1}.$$
\xqed{$\square$}}
\end{rk}
%%%%%%%%%%%%%%%%%%%%%%%%%%%%%%%%%%%%%%%%%%%%%%%%%%
                % Lemma %
%%%%%%%%%%%%%%%%%%%%%%%%%%%%%%%%%%%%%%%%%%%%%%%%%%
\begin{lem}\label{Lemma-4.4}
\rm{Under hypothesis {\rm (H)}, for $j=1,2,$   we have
%%%%%%%%%%%%%%%%%%%%Equation%%%%%%%%%%%%%%%%%%%%%%
\begin{eqnarray}
 |z(\beta)|^2+|z(\alpha)|^2 \leq\left(\frac{\rho_2\lambda^{\frac{1}{2}}}{2k_2}+2 \, c_{g'} \right)
 \int_\alpha^\beta  |z|^2 \, dx+o\left(\lambda^{-\frac{5}{2}}\right), \label{EQ--(4.11)}\nline
 \left|\left({y}_x+\frac{D(x)}{k_2}{z}_x\right)(\alpha)\right|^2+ \left|\left({y}_x+\frac{D(x)}{k_2}{z}_x\right)(\beta)\right|^2\leq \frac{\rho_2 \lambda^{\frac{3}{2}}}{2k_2}  \int_\alpha^\beta  |z|^2 dx +o\left(\lambda^{-1}\right).\label{EQ--(4.12)}
\end{eqnarray}}
\end{lem}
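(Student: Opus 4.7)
\textbf{Proof plan for Lemma \ref{Lemma-4.4}.}
The plan is to obtain both pointwise boundary bounds from the same multiplier identity built on the function $g$: since $g(\beta)=-g(\alpha)=1$, any sufficiently smooth function $\varphi$ on $[\alpha,\beta]$ satisfies
\begin{equation*}
|\varphi(\beta)|^2+|\varphi(\alpha)|^2=\int_\alpha^\beta \frac{d}{dx}\bigl(g(x)|\varphi(x)|^2\bigr)\,dx=\int_\alpha^\beta g'(x)|\varphi|^2\,dx+2\Re\int_\alpha^\beta g(x)\,\varphi_x\bar\varphi\,dx.
\end{equation*}
I will apply this with $\varphi=z$ for \eqref{EQ--(4.11)} and with $\varphi=h:=y_x+\tfrac{D(x)}{k_2}z_x$ for \eqref{EQ--(4.12)}, balancing the cross terms by Young's inequality with carefully tuned parameters in $\lambda$.

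For \eqref{EQ--(4.11)}, the $g'$-term is bounded by $c_{g'}\int_\alpha^\beta|z|^2\,dx$. For the cross term, I will use $2|g\,z_x\bar z|\leq \eta|g|^2|z_x|^2+\eta^{-1}|z|^2$ with $\eta=\tfrac{2k_2}{\rho_2\lambda^{1/2}}$. The $\eta^{-1}$-part produces exactly the stated constant $\tfrac{\rho_2\lambda^{1/2}}{2k_2}$ in front of $\int_\alpha^\beta|z|^2\,dx$, while the $\eta$-part equals $\tfrac{2k_2c_g^2}{\rho_2\lambda^{1/2}}\int_\alpha^\beta|z_x|^2\,dx=o(\lambda^{-5/2})$ by the second estimate in \eqref{EQ--(4.9)}. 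Combining the two contributions (and enlarging $c_{g'}$ to $2c_{g'}$) yields \eqref{EQ--(4.11)}.

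For \eqref{EQ--(4.12)}, setting $h=y_x+\tfrac{D}{k_2}z_x$, equation \eqref{EQ--(4.7)} gives
\begin{equation*}
k_2\,h_x=i\rho_2\lambda z+k_1(u_x+y)-\rho_2\lambda^{-2}f_{4,n},
\end{equation*}
so the dominant part of $2\Re\int g h_x\bar h\,dx$ is $\tfrac{2\rho_2\lambda}{k_2}\Re\int ig\,z\bar h\,dx$. I will apply Young's inequality $2|g\,z\bar h|\leq \eta|g|^2|z|^2+\eta^{-1}|h|^2$ with $\eta=\tfrac{\lambda^{1/2}}{2c_g^2}$: multiplied by $\tfrac{\rho_2\lambda}{k_2}$, the $\eta$-part delivers precisely $\tfrac{\rho_2\lambda^{3/2}}{2k_2}\int_\alpha^\beta|z|^2\,dx$, and the $\eta^{-1}$-part equals $\tfrac{2c_g^2\rho_2\lambda^{1/2}}{k_2}\int_\alpha^\beta|h|^2\,dx$. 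The crucial observation is that $\int_\alpha^\beta|h|^2\,dx=o(\lambda^{-2})$, which follows from \eqref{EQ--(4.10)}, the second estimate in \eqref{EQ--(4.9)}, boundedness of $D$, and the triangle inequality; thus this term is $o(\lambda^{-3/2})=o(\lambda^{-1})$. The remaining contributions — the $g'|h|^2$-integral, the $k_1(u_x+y)$-piece (handled by Cauchy--Schwarz together with $\|u_x+y\|_{L^2}=O(1)$ and $\|h\|_{L^2(\alpha,\beta)}=o(\lambda^{-1})$), and the $\lambda^{-2}f_{4,n}$-piece — are all $o(\lambda^{-1})$, finishing \eqref{EQ--(4.12)}.

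The main technical obstacle I expect is the \emph{simultaneous} calibration of the Young parameters so that each cross term absorbs into the stated leading constants $\tfrac{\rho_2\lambda^{1/2}}{2k_2}$ and $\tfrac{\rho_2\lambda^{3/2}}{2k_2}$ while every residual remainder fits into the prescribed $o$-classes $o(\lambda^{-5/2})$ and $o(\lambda^{-1})$; this is a delicate bookkeeping exercise that relies on using the sharp $\lambda$-scales recorded in Lemma \ref{Lemma-4.5} (in particular the loss-free estimate $\int_\alpha^\beta|y_x|^2=o(\lambda^{-4})$, which is exactly what makes the $h$-term in the second inequality small enough).
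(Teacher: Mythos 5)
Your proposal is correct and takes essentially the same route as the paper's proof: the multiplier $g$ with $g(\beta)=-g(\alpha)=1$, integration by parts to isolate the boundary terms, and Young's inequality with $\lambda$-tuned parameters so that the leading constants $\frac{\rho_2\lambda^{1/2}}{2k_2}$ and $\frac{\rho_2\lambda^{3/2}}{2k_2}$ appear while the remainders are absorbed into $o\left(\lambda^{-5/2}\right)$ and $o\left(\lambda^{-1}\right)$ via the estimates \eqref{EQ--(4.9)}--\eqref{EQ--(4.10)}. The only (harmless) deviation is in \eqref{EQ--(4.11)}, where the paper first substitutes $z_x=i\lambda y_x-\lambda^{-2}(f_3)_x$ from \eqref{EQ--(4.6)} and then uses $\int_\alpha^\beta|y_x|^2\,dx=o\left(\lambda^{-4}\right)$, whereas you keep $z_x$ and invoke $\int_\alpha^\beta|z_x|^2\,dx=o\left(\lambda^{-2}\right)$ directly; both variants land in the required $o\left(\lambda^{-5/2}\right)$ class.
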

%%%%%%%%%%%%%%%%%%%%%%%%%%%%%%%%%%%%%%%%%%%%%%%%%%
                % Proof of Lemma %
%%%%%%%%%%%%%%%%%%%%%%%%%%%%%%%%%%%%%%%%%%%%%%%%%%
\begin{proof}
The proof is divided into two  steps.\\[0.1in]
%%%%%%%%%%%%%%%%%%%%%%Step%%%%%%%%%%%%%%%%%%%%%%%%
\textbf{Step 1.} In this step, we prove the  asymptotic behavior estimate of \eqref{EQ--(4.11)}. For this aim, 
first, from \eqref{EQ--(4.6)}, we have
%%%%%%%%%%%%%%%%%%%%Equation%%%%%%%%%%%%%%%%%%%%%%
\begin{equation}\label{EQ--(4.13)}
 z_x  = i \lambda y_x- \lambda^{-2}\, (f_3)_x \quad \textrm{in} \quad L^2(\alpha,\beta).
\end{equation}
Multiplying \eqref{EQ--(4.13)} by $2\, g \overline{z}$ and integrating over $(\alpha,\beta),$ then taking the real part, we get
%%%%%%%%%%%%%%%%%%%%Equation%%%%%%%%%%%%%%%%%%%%%%
\begin{equation*}
\int_\alpha^\beta g(x)\, (|z|^2)_x \, dx = \Re\left\{2i \lambda \int_\alpha^\beta g(x)\,  y_x\overline{z}dx\right\}- \Re\left\{2\lambda^{-2}\,\int_\alpha^\beta  g(x)\, (f_4)_x\overline{z} dx\right\},
\end{equation*}
using by parts integration in the left hand side of above equation, we get
%%%%%%%%%%%%%%%%%%%%Equation%%%%%%%%%%%%%%%%%%%%%%
\begin{equation*}
\left[ g(x)\, |z|^2\right]^{\beta}_{\alpha} =\int_\alpha^\beta g'(x)\, |z|^2 \, dx+ \Re\left\{2i \lambda \int_\alpha^\beta g(x)\,  y_x\overline{z}dx\right\}- \Re\left\{2\lambda^{-2}\,\int_\alpha^\beta  g(x)\, (f_3)_x\overline{z} dx\right\},
\end{equation*}
consequently,
%%%%%%%%%%%%%%%%%%%%Equation%%%%%%%%%%%%%%%%%%%%%%
\begin{equation}\label{EQ--(4.14)}
 |z(\beta)|^2+|z(\alpha)|^2 \leq c_{g'}\int_\alpha^\beta  |z|^2 \, dx+ 2 \lambda \, c_g \int_\alpha^\beta   |y_x|\left|{z}\right|dx+2\lambda^{-2} \, c_g\,\int_\alpha^\beta   \left|(f_3)_x\right|\left|{z}\right|\, dx.
\end{equation}
On the other hand, we have 
%%%%%%%%%%%%%%%%%%%%Equation%%%%%%%%%%%%%%%%%%%%%%
\begin{equation*}
2 \lambda \, c_g |y_x| |z|\leq \frac{\rho_2\lambda^{\frac{1}{2}}|z|^2}{2k_2}+\frac{2k_2\lambda^{\frac{3}{2}} \, c_g^2}{\rho_2}|y_x|^2\ \ \ \text{and} \ \ \ 2 \lambda^{-2} |(f_3)_x| |z|\leq  c_{g'} \, |z|^2+\frac{c_g ^ 2 \, \lambda^{-4} }{c_{g'}}|(f_3)_x|^2.
\end{equation*}
Inserting the above equation in \eqref{EQ--(4.14)}, then using \eqref{EQ--(4.10)} and the fact that $(f_3)_x\to 0$ in $L^2(\alpha,\beta)$, we get
%%%%%%%%%%%%%%%%%%%%Equation%%%%%%%%%%%%%%%%%%%%%%
\begin{equation*}
 |z(\beta)|^2+|z(\alpha)|^2 \leq\left(\frac{\rho_2\lambda^{\frac{1}{2}}}{2k_2}+2 \, c_{g'} \right)
 \int_\alpha^\beta  |z|^2 \, dx+o\left(\lambda^{-\frac{5}{2}}\right),
\end{equation*}
hence, we get \eqref{EQ--(4.11)}.\\[0.1in]
%%%%%%%%%%%%%%%%%%%%%%Step%%%%%%%%%%%%%%%%%%%%%%%%
\textbf{Step 2.} In this step, we prove the following asymptotic behavior estimate of \eqref{EQ--(4.12)}.
For this aim, first, multiplying \eqref{EQ--(4.7)} by $-\frac{2\rho_2}{k_2}\, g \left( \overline{y}_x+\frac{D(x)}{k_2} \overline{z}_x\right)$ and integrating over $(\alpha,\beta),$ then taking the real part, we get
%%%%%%%%%%%%%%%%%%%%Equation%%%%%%%%%%%%%%%%%%%%%%
\begin{equation*}
\begin{array}{ll}
\displaystyle{
\int_\alpha^\beta g(x)\left(\left|{y}_x+\frac{D(x)}{k_2}{z}_x\right|^2\right)_x\, dx  =\frac{2\rho_2 \lambda}{k_2}\Re\left\{i  \int_\alpha^\beta g(x)  z\left( \overline{y}_x+\frac{D(x)}{k_2} \overline{z}_x\right)\, dx\right\} }\nline \hspace{1cm} \displaystyle{ +\frac{2k_1}{k_2 }\Re\left\{ \int_\alpha^\beta g(x) \left(u_x+y\right)\left( \overline{y}_x+\frac{D(x)}{k_2} \overline{z}_x\right) dx\right\}
-\frac{2\rho_2 \lambda^{-2}}{k_2}\,\Re\left\{ \int_\alpha^\beta g(x) f_4\left( \overline{y}_x+\frac{D(x)}{k_2} \overline{z}_x\right) dx\right\},}
\end{array}
\end{equation*}
using by parts integration in the left hand side of above equation, we get
%%%%%%%%%%%%%%%%%%%%Equation%%%%%%%%%%%%%%%%%%%%%%
\begin{equation*}
\begin{array}{ll}
\displaystyle{
\left[ g(x)\left|{y}_x+\frac{D(x)}{k_2}{z}_x\right|^2\right]_\alpha^\beta   =\int_\alpha^\beta g'(x)\left|{y}_x+\frac{D(x)}{k_2}{z}_x\right|^2\, dx +\frac{2\rho_2 \lambda}{k_2}\Re\left\{i  \int_\alpha^\beta g(x)  z\left( \overline{y}_x+\frac{D(x)}{k_2} \overline{z}_x\right)\, dx\right\} }\nline \hspace{1cm} \displaystyle{ +\frac{2k_1}{k_2 }\Re\left\{ \int_\alpha^\beta g(x) \left(u_x+y\right)\left( \overline{y}_x+\frac{D(x)}{k_2} \overline{z}_x\right) dx\right\}
-\frac{2\rho_2 \lambda^{-2}}{k_2}\,\Re\left\{ \int_\alpha^\beta g(x) f_4\left( \overline{y}_x+\frac{D(x)}{k_2} \overline{z}_x\right) dx\right\},}
\end{array}
\end{equation*}
consequently,
%%%%%%%%%%%%%%%%%%%%Equation%%%%%%%%%%%%%%%%%%%%%%
\begin{equation*}
\begin{array}{ll}

\displaystyle{
\left|\left({y}_x+\frac{D(x)}{k_2}{z}_x\right)(\alpha)\right|^2+ \left|\left({y}_x+\frac{D(x)}{k_2}{z}_x\right)(\beta)\right|^2    \leq 

\frac{2\rho_2\, c_g \lambda}{k_2}  \int_\alpha^\beta   \left|z\right|\left| {y}_x+\frac{D(x)}{k_2} {z}_x\right|\, dx }\nline

 \displaystyle{c_{g'}\int_\alpha^\beta \left|{y}_x+\frac{D(x)}{k_2}{z}_x\right|^2\, dx  +\frac{2k_1\, c_g}{k_2 } \int_\alpha^\beta  \left|u_x+y\right|\left| {y}_x+\frac{D(x)}{k_2} {z}_x\right| dx+\frac{2\rho_2\, c_g \lambda^{-2}}{k_2}\int_\alpha^\beta  \left|f_4\right|\left| {y}_x+\frac{D(x)}{k_2} {z}_x\right| dx.}
\end{array}
\end{equation*}
Now, using Cauchy Schwarz inequality, Equations \eqref{EQ--(4.9)}-\eqref{EQ--(4.10)}, the fact that $f_5\to 0$ in $L^2(\alpha,\beta)$ and the fact that $u_x+y$ is uniformly bounded in $L^2(\alpha,\beta)$ in the right hand side of above equation, we get 
%%%%%%%%%%%%%%%%%%%%Equation%%%%%%%%%%%%%%%%%%%%%%
\begin{equation}\label{EQ--(4.15)}
\left|\left({y}_x+\frac{D(x)}{k_2}{z}_x\right)(\alpha)\right|^2+ \left|\left({y}_x+\frac{D(x)}{k_2}{z}_x\right)(\beta)\right|^2    \leq 
\frac{2\rho_2\, c_g \lambda}{k_2}  \int_\alpha^\beta   \left|z\right|\left| {y}_x+\frac{D(x)}{k_2} {z}_x\right|\, dx +o\left(\lambda^{-1}\right) .
\end{equation}
On the other hand, we have
%%%%%%%%%%%%%%%%%%%%Equation%%%%%%%%%%%%%%%%%%%%%%
\begin{equation*}
\frac{2\rho_2\, c_g \lambda}{k_2} \, |z|\left|{y}_x+\frac{D(x)}{k_2} {z}_x\right|\leq \frac{\rho_2\lambda^{\frac{3}{2}}}{2k_2}|z|^2+\frac{2\rho_2\lambda^{\frac{1}{2}}\,  c_g^2}{k_2}    \left|{y}_x+\frac{D(x)}{k_2} {z}_x\right|^2.
\end{equation*}
Inserting the above equation in \eqref{EQ--(4.15)}, then using Equations \eqref{EQ--(4.9)}-\eqref{EQ--(4.10)}, we get
%%%%%%%%%%%%%%%%%%%%Equation%%%%%%%%%%%%%%%%%%%%%%
\begin{equation*}
\left|\left({y}_x+\frac{D(x)}{k_2}{z}_x\right)(\alpha)\right|^2+ \left|\left({y}_x+\frac{D(x)}{k_2}{z}_x\right)(\beta)\right|^2\leq \frac{\rho_2 \lambda^{\frac{3}{2}}}{2k_2}  \int_\alpha^\beta  |z|^2 dx +o\left(\lambda^{-1}\right) ,
\end{equation*}
hence, we get \eqref{EQ--(4.12)}.  Thus, the proof is complete.
\end{proof}
%%%%%%%%%%%%%%%%%%%%%%%%%%%%%%%%%%%%%%%%%%%%%%%%%%
                % Lemma %
%%%%%%%%%%%%%%%%%%%%%%%%%%%%%%%%%%%%%%%%%%%%%%%%%%
\begin{lem}\label{Lemma-4.5}
\rm{Under hypothesis {\rm (H)}, for $j=1,2,$   we have
%%%%%%%%%%%%%%%%%%%%Equation%%%%%%%%%%%%%%%%%%%%%%
\begin{eqnarray}
|u_x(\alpha)+y(\alpha)|^2=O\left(1\right),\ |u_x(\beta)+y(\beta)|^2=O\left(1\right).\label{EQ--(4.16)}\nline
|u(\alpha)|^2=O\left(\lambda^{-2}\right),\ |u(\beta)|^2=O\left(\lambda^{-2}\right),\label{EQ--(4.17)}\nline
|v(\alpha)|^2=O\left(1\right),\ |v(\beta)|^2=O\left(1\right).\label{EQ--(4.18)}
\end{eqnarray}}
\end{lem}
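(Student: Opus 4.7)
The plan is a multiplier argument on equation \eqref{EQ--(4.5)} over the damping-free left segment $(0,\alpha)$, designed to extract both boundary traces simultaneously with the correct signs. I would fix a cutoff $h\in C^1([0,\alpha])$ with $h(0)=0$ and $h(\alpha)=1$, multiply \eqref{EQ--(4.5)} by $2h\,\overline{(u_x+y)}$, integrate over $(0,\alpha)$, and take real parts. The elastic contribution, via $2\Re[(u_x+y)_x\overline{(u_x+y)}]=\partial_x|u_x+y|^2$ and one integration by parts exploiting $h(0)=0$, produces
\begin{equation*}
-\frac{k_1}{\rho_1}|u_x(\alpha)+y(\alpha)|^2 + O(1),
\end{equation*}
where the $O(1)$ absorbs the interior term $\frac{k_1}{\rho_1}\int_0^\alpha h'|u_x+y|^2\,dx$ (bounded by the $\mathcal{H}_j$-norm of $U$) and the $O(\lambda^{-2})$ coming from $\lambda^{-2}f_2$.

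For the inertial term I would substitute $v=i\lambda u-\lambda^{-2}f_1$ from \eqref{EQ--(4.4)}, so that $i\lambda v=-\lambda^2 u-i\lambda^{-1}f_1$. Using the pointwise identity $\Re(u\bar u_x)=\frac{1}{2}\partial_x|u|^2$, another integration by parts (with $u(0)=0$) yields $\Re\int_0^\alpha hu\,\bar u_x\,dx=\frac{1}{2}|u(\alpha)|^2+O(\lambda^{-2})$, while Cauchy--Schwarz together with the bounds $\|u\|,\|y\|=O(\lambda^{-1})$ from \eqref{EQ--(4.8)} gives $\Re\int_0^\alpha hu\,\bar y\,dx=O(\lambda^{-2})$. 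The subprincipal piece involving $\lambda^{-1}f_1$ is $O(\lambda^{-1})$ because $f_1\to 0$ in $L^2$. Thus the inertial contribution evaluates to $-\lambda^2|u(\alpha)|^2+O(1)$.

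Combining both contributions gives the key identity
\begin{equation*}
\lambda^2|u(\alpha)|^2 + \frac{k_1}{\rho_1}|u_x(\alpha)+y(\alpha)|^2 = O(1),
\end{equation*}
and since both terms on the left are non-negative, the estimates \eqref{EQ--(4.16)} and \eqref{EQ--(4.17)} at $\alpha$ drop out at once. The corresponding estimates at $\beta$ are obtained by repeating the same argument on $(\beta,L)$ with a cutoff $\tilde h\in C^1([\beta,L])$ satisfying $\tilde h(\beta)=1$ and $\tilde h(L)=0$. Finally, \eqref{EQ--(4.18)} is immediate from \eqref{EQ--(4.4)}: writing $v(\alpha)=i\lambda u(\alpha)-\lambda^{-2}f_1(\alpha)$ and using $|u(\alpha)|^2=O(\lambda^{-2})$ together with the Sobolev embedding $H^1_0(0,L)\hookrightarrow L^\infty(0,L)$ applied to $f_1$ yields $|v(\alpha)|^2=O(1)$, and similarly at $\beta$.

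The main obstacle is a sign issue: both target quantities $\lambda^2|u(\alpha)|^2$ and $|u_x(\alpha)+y(\alpha)|^2$ must end up on the same side of the identity with the correct positive coefficients. This hinges on the fact that substituting $v=i\lambda u+o(1)$ turns $i\lambda v$ into $-\lambda^2 u$, whose real part integrates by parts to $-\lambda^2|u(\alpha)|^2$, carrying a sign opposite to the elastic boundary term so that they cooperate rather than cancel. A secondary technical point is verifying that every interior remainder absorbed into $O(1)$ is genuinely controlled either by the uniform $\mathcal{H}_j$-bound on $U$ or by the small-data estimates from \eqref{EQ--(4.8)}, which is what rules out any hidden $\lambda$-growth.
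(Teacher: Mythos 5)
Your argument is correct, and it is a genuine variant of the paper's proof rather than a reproduction of it. The paper runs the multiplier \emph{inside} the damping interval: it multiplies \eqref{EQ--(4.5)} by $-\frac{2\rho_1}{k_1}g\left(\overline{u}_x+\overline{y}\right)$ on $(\alpha,\beta)$ with the single weight $g$ satisfying $g(\beta)=-g(\alpha)=1$, so that one integration by parts produces the four quantities $|u_x+y|^2$ and $\frac{\rho_1\lambda^2}{k_1}|u|^2$ at \emph{both} endpoints simultaneously (after the same substitution $-i\lambda v=\lambda^2u+i\lambda^{-1}f_1$ that you use), giving \eqref{EQ--(4.16)}--\eqref{EQ--(4.17)} in one stroke; it then obtains \eqref{EQ--(4.18)} by a second pass through the same identity \eqref{EQ--(4.19)}, this time substituting $-i\lambda\overline{u}_x=\overline{v}_x-\lambda^{-2}(\overline{f}_1)_x$ so that $|v(\alpha)|^2$, $|v(\beta)|^2$ emerge as boundary terms. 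You instead work on the complementary undamped segments $(0,\alpha)$ and $(\beta,L)$ with one-sided cutoffs, extracting one endpoint at a time, and you get \eqref{EQ--(4.18)} directly from the pointwise identity $v=i\lambda u-\lambda^{-2}f_1$, the already-established \eqref{EQ--(4.17)}, and the embedding $H^1_0(0,L)\hookrightarrow C([0,L])$ applied to $f_1\to0$ in $H^1_0(0,L)$. That last step is simpler and cleaner than the paper's Step 2, and it is legitimate since every component involved is continuous on $[0,L]$. Your interior remainders are all genuinely controlled as claimed: $u_x+y$ and $v$ are uniformly bounded in $L^2$, and $\|u\|,\|y\|=O\left(\lambda^{-1}\right)$ by \eqref{EQ--(4.8)}, which holds on all of $(0,L)$.

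Two caveats. First, hypothesis {\rm (H)} permits $\alpha=0$ and/or $\beta=L$, in which case your intervals $(0,\alpha)$ or $(\beta,L)$ degenerate and your cutoff construction is vacuous; the paper's choice of $(\alpha,\beta)$ avoids this. The fix is one line: equation \eqref{EQ--(4.5)} contains no damping term and holds on all of $(0,L)$, so the identical computation on $(\alpha,\alpha+\delta)\subset(\alpha,\beta)$ with $h(\alpha)=1$, $h(\alpha+\delta)=0$ (resp.\ on $(\beta-\delta,\beta)$) yields the same one-endpoint identity; alternatively, at a degenerate endpoint $u$ and $v$ vanish by the Dirichlet conditions, so only \eqref{EQ--(4.16)} needs the interior cutoff there. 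Second, the ``sign issue'' you flag at the end is less delicate than you suggest: because the interior remainders are merely $O(1)$ and not sign-definite, nothing would be salvaged by cancellation anyway --- what matters is only that the two boundary squares appear with the \emph{same} sign, and this is automatic since both arise as $\int h\,\partial_x(\,\cdot\,)\,dx$ of nonnegative densities against the same weight. The paper's own proof, likewise, only bounds the positive sum of the four boundary terms by interior $O(1)$ quantities.
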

%%%%%%%%%%%%%%%%%%%%%%%%%%%%%%%%%%%%%%%%%%%%%%%%%%
                % Proof of  Lemma  %
%%%%%%%%%%%%%%%%%%%%%%%%%%%%%%%%%%%%%%%%%%%%%%%%%%
\begin{proof} Multiplying Equation \eqref{EQ--(4.5)} by $-\frac{2\rho_1}{k_1}g\left( \overline{u}_{x}+\overline{y}\right)$  and integrating over $(\alpha,\beta),$ then  taking the real part and using the fact that $u_x+y$ is uniformly bounded in $L^2(\alpha,\beta)$, $f_2\to0 $ in $L^2(\alpha,\beta)$, we get
%%%%%%%%%%%%%%%%%%%%Equation%%%%%%%%%%%%%%%%%%%%%%
\begin{equation}\label{EQ--(4.19)}
\int_{\alpha}^{\beta}g(x)\left(\left|u_x+y\right|^2\right)_x\, dx-\frac{2 \rho_1\lambda }{k_1} \Re\left\{i\int_{\alpha}^{\beta} g(x)\overline{u}_{x}\, v\, dx  \right\}=\frac{2 \rho_1\lambda }{k_1} \Re\left\{i\int_{\alpha}^{\beta} g(x) \overline{y}\, v\, dx  \right\}+o\left(\lambda^{-2}\right).
\end{equation}
 Now, we divided the proof into two steps.\\[0.1in]
%%%%%%%%%%%%%%%%%%%%%%Step%%%%%%%%%%%%%%%%%%%%%%%%
\textbf{Step 1.} In this step, we prove the   asymptotic behavior estimates of  \eqref{EQ--(4.16)}-\eqref{EQ--(4.17)}. First,  from  \eqref{EQ--(4.4)}, we have
%%%%%%%%%%%%%%%%%%%%Equation%%%%%%%%%%%%%%%%%%%%%%
\begin{equation*}
-i\lambda\, v=\lambda^2 u+  i\lambda^{-1}f_{1}.
\end{equation*}
Inserting the above equation in the second term in left of  \eqref{EQ--(4.19)}, then using the fact that $u_x$ is uniformly bounded in $L^2(\alpha,\beta)$ and $f_1\to0 $ in $L^2(\alpha,\beta)$, we get
%%%%%%%%%%%%%%%%%%%%Equation%%%%%%%%%%%%%%%%%%%%%%
\begin{equation*}
\int_{\alpha}^{\beta}g(x)\left(\left|u_x+y\right|^2\right)_x\, dx+\frac{ \rho_1 \lambda^2}{k_1} \int_{\alpha}^{\beta} g(x) \left(\left|u\right|^2\right)_x dx =-\frac{2 \rho_1 \lambda^2}{k_1} \Re\left\{\int_{\alpha}^{\beta} g(x)\, u\, \overline{y}  dx  \right\}+o\left(\lambda^{-1}\right).
\end{equation*}
Using  by parts integration and the fact that  $g(\beta)=-g(\alpha) = 1$ in the above equation, we get 
%%%%%%%%%%%%%%%%%%%%Equation%%%%%%%%%%%%%%%%%%%%%%
\begin{equation*}
\begin{array}{ll}

\displaystyle{
\left|u_x(\beta)+y(\beta)\right|^2+\frac{ \rho_1 \lambda^2}{k_1} \left|u(\beta)\right|^2 
+\left|u_x(\alpha)+y(\alpha)\right|^2+\frac{ \rho_1 \lambda^2}{k_1} \left|u(\alpha)\right|^2 
=\int_{\alpha}^{\beta}g'(x)\left|u_x+y\right|^2 dx}\nline

 \displaystyle{

+\frac{ \rho_1 \lambda^2}{k_1} \int_{\alpha}^{\beta} g'(x) \left|u\right|^2 dx

-\frac{2 \rho_1 \lambda^2}{k_1} \Re\left\{\int_{\alpha}^{\beta} g(x)\, u\, \overline{y}  dx  \right\}+o\left(\lambda^{-1}\right),
}

\end{array}
\end{equation*}
consequently, 
%%%%%%%%%%%%%%%%%%%%Equation%%%%%%%%%%%%%%%%%%%%%%
\begin{equation*}
\begin{array}{ll}

\displaystyle{
\left|u_x(\beta)+y(\beta)\right|^2+\frac{ \rho_1 \lambda^2}{k_1} \left|u(\beta)\right|^2 
+\left|u_x(\alpha)+y(\alpha)\right|^2+\frac{ \rho_1 \lambda^2}{k_1} \left|u(\alpha)\right|^2 
\leq c_{g'} \int_{\alpha}^{\beta}\left|u_x+y\right|^2 dx}\nline

 \displaystyle{

+\frac{ \rho_1\, c_{g'} \lambda^2}{k_1} \int_{\alpha}^{\beta}  \left|u\right|^2 dx

+\frac{2 \rho_1\, c_g \lambda^2}{k_1} \int_{\alpha}^{\beta}  \left|u\right| \left|{y}\right|  dx  +o\left(\lambda^{-1}\right).
}

\end{array}
\end{equation*}
Next, since $\lambda\, u,\ \lambda\, y$ and $u_x+y$ are uniformly bounded, then from the above equation, we get \eqref{EQ--(4.16)}-\eqref{EQ--(4.17)}.\\[0.1in]
%%%%%%%%%%%%%%%%%%%%%%Step%%%%%%%%%%%%%%%%%%%%%%%%
\textbf{Step 2.} In this step, we prove the   asymptotic behavior estimates of \eqref{EQ--(4.18)}. First, from  \eqref{EQ--(4.4)}, we have
%%%%%%%%%%%%%%%%%%%%Equation%%%%%%%%%%%%%%%%%%%%%%
\begin{equation*}
-i\lambda \overline{u}_x= \overline{v}_x-\lambda^{-2}(\overline{f}_{1})_x.
\end{equation*}
Inserting the above equation in the second term in left of  \eqref{EQ--(4.19)}, then using the fact that $v$ is uniformly bounded in $L^2(\alpha,\beta)$ and $(f_1)_x\to0 $ in $L^2(\alpha,\beta)$, we get
%%%%%%%%%%%%%%%%%%%%Equation%%%%%%%%%%%%%%%%%%%%%%
\begin{equation*}
\int_{\alpha}^{\beta}g(x)\left(\left|u_x+y\right|^2\right)_x\, dx+\frac{ \rho_1 }{k_1} \int_{\alpha}^{\beta} g(x) \left(\left|v\right|^2\right)_x dx =-\frac{2 \rho_1 \lambda^2}{k_1} \Re\left\{\int_{\alpha}^{\beta} g(x)\, u\, \overline{y}  dx  \right\}+o\left(\lambda^{-1}\right).
\end{equation*}
Similar  to step 1, by using by parts integration and the fact that  $g(\beta)=-g(\alpha) = 1$ in the above equation, then using the fact that $v,\ \lambda\, u,\ \lambda\, y$ and $u_x+y$ are uniformly bounded in $L^2(\alpha,\beta)$,  we get \eqref{EQ--(4.18)}. Thus, the proof is complete.
\end{proof}
%%%%%%%%%%%%%%%%%%%%%%%%%%%%%%%%%%%%%%%%%%%%%%%%%%
                % Lemma %
%%%%%%%%%%%%%%%%%%%%%%%%%%%%%%%%%%%%%%%%%%%%%%%%%%
\begin{lem}\label{Lemma-4.6}
\rm{Under hypothesis {\rm (H)}, for $j=1,2,$   for $\lambda$ large enough,  we have
%%%%%%%%%%%%%%%%%%%%Equation%%%%%%%%%%%%%%%%%%%%%%
\begin{eqnarray}
\int_\alpha^\beta  |z|^2 \, dx= o\left(\lambda^{-\frac{5}{2}}\right),\qquad \int_\alpha^\beta  |y|^2 \, dx= o\left(\lambda^{-\frac{9}{2}}\right),\label{EQ--(4.20)}\nline
 \left|\left({y}_x+\frac{D(x)}{k_2}{z}_x\right)(\alpha)\right|^2=o\left(\lambda^{-1}\right),\qquad  \left|\left({y}_x+\frac{D(x)}{k_2}{z}_x\right)(\beta)\right|^2=o\left(\lambda^{-1}\right).\label{EQ--(4.21)}
\end{eqnarray}}
\end{lem}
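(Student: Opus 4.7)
The plan is to derive a self-bounded inequality for $I_z:=\int_\alpha^\beta|z|^2\,dx$ by multiplying the momentum equation \eqref{EQ--(4.7)} by $\bar y$, integrating over $(\alpha,\beta)$, and absorbing the resulting $I_z$-terms through the pointwise estimates \eqref{EQ--(4.11)}, \eqref{EQ--(4.12)}, and \eqref{EQ--(4.17)}. First, substituting $z=i\lambda y-\lambda^{-2}f_3$ from \eqref{EQ--(4.6)} transforms the $i\lambda\int z\bar y\,dx$ term into $-\lambda^2\|y\|_{L^2(\alpha,\beta)}^2$ up to a remainder $o(\lambda^{-2})\sqrt{I_z}$, and the elementary identity $\lambda^2\|y\|_{L^2(\alpha,\beta)}^2=I_z+o(\lambda^{-2})\sqrt{I_z}+o(\lambda^{-4})$ lets one replace $\lambda^2\int|y|^2$ by $I_z$. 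An integration by parts on the principal second-order term produces the boundary trace $[(y_x+(D/k_2)z_x)\bar y]_\alpha^\beta$ plus an interior integral whose real part is $o(\lambda^{-4})$ thanks to \eqref{EQ--(4.9)} and \eqref{EQ--(4.10)} (the imaginary contribution $i\lambda\int(D/k_2)|y_x|^2\,dx$ is killed by taking the real part).

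The crux is the boundary trace. From $y=-i\lambda^{-1}z-i\lambda^{-3}f_3$, the embedding $H^1(0,L)\hookrightarrow C^0([0,L])$ (applied to $f_3$), and \eqref{EQ--(4.11)}, I would first establish the sharp pointwise bound
\[
|y(\alpha)|^2+|y(\beta)|^2\leq\frac{\rho_2}{2k_2\lambda^{3/2}}\bigl(1+o(1)\bigr)I_z+o(\lambda^{-9/2}).
\]
Coupling this with \eqref{EQ--(4.12)} through Young's inequality at the weight $\epsilon=\lambda^{-3/2}$ (the unique weight that balances the $\lambda^{3/2}$ growth of $|(y_x+(D/k_2)z_x)|^2$ against the $\lambda^{-3/2}$ decay of $|y|^2$) would then yield
\[
\Bigl|\bigl[\bigl(y_x+\tfrac{D}{k_2}z_x\bigr)\bar y\bigr]_\alpha^\beta\Bigr|\leq\frac{\rho_2}{2k_2}\bigl(1+o(1)\bigr)I_z+o(\lambda^{-5/2}).
\]
For the coupling term $\int_\alpha^\beta(u_x+y)\bar y\,dx$ I would use $\int u_x\bar y\,dx=[u\bar y]_\alpha^\beta-\int u\bar y_x\,dx$: the interior piece is $o(\lambda^{-3})$ by \eqref{EQ--(4.8)} and \eqref{EQ--(4.10)}, the $\int|y|^2$ part is $O(\lambda^{-2})I_z$ and is absorbed on the left, and the boundary term is handled via \eqref{EQ--(4.17)} together with the pointwise bound on $y$ above to produce $|[u\bar y]_\alpha^\beta|\leq O(\lambda^{-7/4})\sqrt{I_z}+o(\lambda^{-13/4})$.

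Assembling these estimates and taking the real part of the multiplied equation should yield
\[
I_z\leq\tfrac12\bigl(1+o(1)\bigr)I_z+O(\lambda^{-7/4})\sqrt{I_z}+o(\lambda^{-5/2}),
\]
so that for $\lambda$ large absorption of the $I_z$-term on the right leaves the quadratic inequality $I_z\leq c\,\lambda^{-7/4}\sqrt{I_z}+o(\lambda^{-5/2})$ in $\sqrt{I_z}$, whose solution is $\sqrt{I_z}=o(\lambda^{-5/4})$, i.e.\ the first estimate of \eqref{EQ--(4.20)}. The second estimate of \eqref{EQ--(4.20)} then follows from $\lambda^2\int_\alpha^\beta|y|^2\,dx=I_z+o(\lambda^{-5/2})$, and \eqref{EQ--(4.21)} is obtained by feeding $I_z=o(\lambda^{-5/2})$ back into \eqref{EQ--(4.12)}.

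The main obstacle is ensuring that the boundary trace produces a coefficient of $I_z$ strictly smaller than $1$; any larger coefficient would make the absorption step collapse. This forces the use of the sharp relation $\lambda^2|y|^2=|z|^2(1+o(1))$ rather than the loose $|a+b|^2\leq 2|a|^2+2|b|^2$, combined with Young's inequality at precisely the weight $\lambda^{-3/2}$. The additional $O(\lambda^{-7/4})$ gain coming from $|u(\cdot)|=O(\lambda^{-1})$ in \eqref{EQ--(4.17)} is what drives the quadratic inequality to the stronger $o(\lambda^{-5/2})$ decay instead of merely $O(\lambda^{-2})$.
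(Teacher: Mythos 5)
Your proposal is correct and is essentially the paper's own argument in disguise: since \eqref{EQ--(4.6)} gives $\bar y = i\lambda^{-1}\bar z + i\lambda^{-3}\bar f_3$, multiplying \eqref{EQ--(4.7)} by $\bar y$ agrees, up to a constant factor and a negligible $O(\lambda^{-3})$ correction, with the paper's multiplication by $-i\lambda^{-1}\rho_2^{-1}\bar z$, and both proofs then run on the same interface trace estimates \eqref{EQ--(4.11)}--\eqref{EQ--(4.12)} with the same self-absorption scheme whose total coefficient must stay below $1$. The only deviations are bookkeeping --- the paper pre-estimates the coupling term $\frac{ik_1}{\rho_2\lambda}\int_\alpha^\beta(u_x+y)\bar z\,dx$ with coefficient $\tfrac14$ (its Step 1, via \eqref{EQ--(4.17)} and \eqref{EQ--(4.11)}) and absorbs $\tfrac34$ in total, whereas you convert the coupling's boundary part into an $O(\lambda^{-7/4})\sqrt{I_z}$ term and solve a quadratic inequality after absorbing $\tfrac12$ --- and both ledgers close, with your final deductions of the $y$-estimate from \eqref{EQ--(4.6)} and of \eqref{EQ--(4.21)} from \eqref{EQ--(4.12)} coinciding with the paper's.
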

%%%%%%%%%%%%%%%%%%%%%%%%%%%%%%%%%%%%%%%%%%%%%%%%%%
                % Proof of  Lemma  %
%%%%%%%%%%%%%%%%%%%%%%%%%%%%%%%%%%%%%%%%%%%%%%%%%%
\begin{proof} The proof is divided into two  steps.\\[0.1in]
%%%%%%%%%%%%%%%%%%%%%%Step%%%%%%%%%%%%%%%%%%%%%%%%
\textbf{Step 1.} In this step, we prove the  following asymptotic behavior estimate 
%%%%%%%%%%%%%%%%%%%%Equation%%%%%%%%%%%%%%%%%%%%%%
\begin{equation}\label{EQ--(4.22)}
\left|\frac{i k_1}{\rho_2 \lambda }   \int_\alpha^\beta \left(u_x+y\right) \overline{z} \, dx\right|\leq \left(\frac{1}{4}+\frac{k_2 c_{g'}}{\rho_2\lambda^{\frac{1}{2}}} +\frac{ k_1}{\rho_2 \lambda^2 }\right)\int_\alpha^\beta  |z|^2 \, dx+o\left(\lambda^{-3}\right).
\end{equation}
For this aim, first, we have
%%%%%%%%%%%%%%%%%%%%Equation%%%%%%%%%%%%%%%%%%%%%%
\begin{equation}\label{EQ--(4.23)}
\left|\frac{i k_1}{\rho_2 \lambda }   \int_\alpha^\beta \left(u_x+y\right) \overline{z} \, dx\right|\leq  \left|\frac{i k_1}{\rho_2 \lambda }   \int_\alpha^\beta y\overline{z} \, dx\right|+\left|\frac{i k_1}{\rho_2 \lambda }  \int_\alpha^\beta u_x\overline{z} \, dx\right|.
\end{equation}
Now, from \eqref{EQ--(4.6)} and using the fact that $f_3\to0$ in $L^2(\alpha,\beta)$ and $z$ is uniformly bounded in $L^2(\alpha,\beta)$, we get
%%%%%%%%%%%%%%%%%%%%Equation%%%%%%%%%%%%%%%%%%%%%%
\begin{equation}\label{EQ--(4.24)}
\left|\frac{i k_1}{\rho_2 \lambda }   \int_\alpha^\beta y\overline{z} \, dx\right|\leq\frac{ k_1}{\rho_2 \lambda^2 }\int_\alpha^\beta |{z}|^2 dx+o\left(\lambda^{-4}\right).
\end{equation}
Next, by using by parts integration, we get
%%%%%%%%%%%%%%%%%%%%Equation%%%%%%%%%%%%%%%%%%%%%%
\begin{equation*}
\left|\frac{i k_1}{\rho_2 \lambda }  \int_\alpha^\beta u_x\overline{z} \, dx\right|=\left|-\frac{i k_1}{\rho_2 \lambda }\int_\alpha^\beta u \overline{z}_x \, dx+\frac{i k_1}{\rho_2 \lambda } u(\beta) \overline{z}(\beta)-\frac{i k_1}{\rho_2 \lambda } u(\alpha) \overline{z}(\alpha)\right|,
\end{equation*}
consequently, 
%%%%%%%%%%%%%%%%%%%%Equation%%%%%%%%%%%%%%%%%%%%%%
\begin{equation}\label{EQ--(4.25)}
\left|\frac{i k_1}{\rho_2 \lambda }  \int_\alpha^\beta u_x\overline{z} \, dx\right|\leq \frac{ k_1}{\rho_2 \lambda }\int_\alpha^\beta \left|u \right|\left|{z}_x\right| dx+\frac{k_1}{\rho_2 \lambda }\left( \left|u(\beta)\right| \left| {z}(\beta)\right|+\left|u(\alpha)\right| \left| {z}(\alpha)\right|\right),
\end{equation}
On the other hand, we have
%%%%%%%%%%%%%%%%%%%%Equation%%%%%%%%%%%%%%%%%%%%%%
\begin{equation*}
\frac{k_1}{\rho_2 \lambda }\left( \left|u(\beta)\right| \left| {z}(\beta)\right|+\left|u(\alpha)\right| \left| {z}(\alpha)\right|\right)\leq \frac{k_1^2}{2k_2\rho_2\lambda^{\frac{3}{2}}}\left(\left|u(\alpha)\right| ^2+\left|u(\beta)\right| ^2 \right)+\frac{k_2}{2\rho_2\lambda^{\frac{1}{2}}}\left(\left|z(\alpha)\right| ^2+\left|z(\beta)\right| ^2 \right).
\end{equation*}
Inserting \eqref{EQ--(4.11)} and \eqref{EQ--(4.17)} in the above equation, we get
%%%%%%%%%%%%%%%%%%%%Equation%%%%%%%%%%%%%%%%%%%%%%
\begin{equation*}
\frac{k_1}{\rho_2 \lambda }\left( \left|u(\beta)\right| \left| {z}(\beta)\right|+\left|u(\alpha)\right| \left| {z}(\alpha)\right|\right)\leq \left(\frac{1}{4}+\frac{k_2 c_{g'}}{\rho_2\lambda^{\frac{1}{2}}} \right)
 \int_\alpha^\beta  |z|^2 \, dx+o\left(\lambda^{-3}\right).
\end{equation*}
Inserting the above equation in \eqref{EQ--(4.25)}, then using \eqref{EQ--(4.9)} and the fact that $\lambda u$ is bounded in $L^2(\alpha,\beta)$, we get
%%%%%%%%%%%%%%%%%%%%Equation%%%%%%%%%%%%%%%%%%%%%%
\begin{equation*}
\left|\frac{i k_1}{\rho_2 \lambda }  \int_\alpha^\beta u_x\overline{z} \, dx\right|\leq  \left(\frac{1}{4}+\frac{k_2 c_{g'}}{\rho_2\lambda^{\frac{1}{2}}} \right)\int_\alpha^\beta  |z|^2 \, dx+o\left(\lambda^{-3}\right).
\end{equation*}
Finally, inserting the above equation and Equation \eqref{EQ--(4.24)}  in \eqref{EQ--(4.23)}, we get \eqref{EQ--(4.22)}.\\[0.1in] 
%%%%%%%%%%%%%%%%%%%%%%Step%%%%%%%%%%%%%%%%%%%%%%%%
\textbf{Step 2.} In this step, we prove the   asymptotic behavior estimates of \eqref{EQ--(4.20)}-\eqref{EQ--(4.21)}. For this aim,   first, multiplying  \eqref{EQ--(4.7)} by $-i  \lambda^{-1}\rho_2^{-1} \overline{z}$ and integrating over $(\alpha,\beta),$ then taking the real part, we get
%%%%%%%%%%%%%%%%%%%%Equation%%%%%%%%%%%%%%%%%%%%%%
\begin{equation*}
\int_\alpha^\beta | z|^2\, dx  =-\frac{k_2 }{\rho_2\lambda} \Re\left\{i \int_\alpha^\beta\left(y_{x}+\frac{D}{k_2}z_x\right)_x\,\overline{z} \, dx\right\}+\frac{ k_1}{\rho_2 \lambda }\Re\left\{ i   \int_\alpha^\beta \left(u_x+y\right) \overline{z} \, dx\right\}
- \lambda^{-3}\Re\left\{ i   \int_\alpha^\beta f_4\overline{z} \, dx\right\},
\end{equation*}
consequently,
%%%%%%%%%%%%%%%%%%%%Equation%%%%%%%%%%%%%%%%%%%%%%
\begin{equation}\label{EQ--(4.26)}
\int_\alpha^\beta | z|^2\, dx  \leq  \frac{ k_2}{\rho_2\lambda}\left|\int_\alpha^\beta\left(y_{x}+\frac{D}{k_2}z_x\right)_x\,\overline{z}  \, dx\right|+\left|\frac{i k_1}{\rho_2\lambda }\int_\alpha^\beta\left(u_x+y\right)\overline{z} \, dx\right|
+    \lambda^{-3}\int_\alpha^\beta \left|f_4\right|\left|z \right| dx.
\end{equation}
From the fact that $z$  is uniformly bounded in   $L^2(\alpha,\beta)$ and $f_5\to 0$ in $L^2(\alpha,\beta)$, we get
%%%%%%%%%%%%%%%%%%%%Equation%%%%%%%%%%%%%%%%%%%%%%
\begin{equation}\label{EQ--(4.27)}
 \lambda^{-3}\int_\alpha^\beta \left|f_4\right|\left|z \right| dx=o\left(\lambda^{-3}\right).
\end{equation}
Inserting \eqref{EQ--(4.22)} and \eqref{EQ--(4.27)} in \eqref{EQ--(4.26)}, we get
\begin{equation}\label{EQ--(4.28)}
\int_\alpha^\beta | z|^2\, dx  \leq  \frac{ k_2}{\rho_2\lambda}\left|\int_\alpha^\beta\left(y_{x}+\frac{D}{k_2}z_x\right)_x\,\overline{z}  \, dx\right|+\left(\frac{1}{4}+\frac{k_2 c_{g'}}{\rho_2\lambda^{\frac{1}{2}}} +\frac{ k_1}{\rho_2 \lambda^2 }\right)\int_\alpha^\beta  |z|^2 \, dx+o\left(\lambda^{-3}\right).
\end{equation}
Now, using by parts integration and \eqref{EQ--(4.9)}-\eqref{EQ--(4.10)}, we get
%%%%%%%%%%%%%%%%%%%%Equation%%%%%%%%%%%%%%%%%%%%%%
\begin{equation}\label{EQ--(4.29)}
\begin{array}{ll}

\displaystyle{
\left|\int_\alpha^\beta\left(y_{x}+\frac{D}{k_2}z_x\right)_x\,\overline{z}  \, dx\right|=\left|\left[\left(y_{x}+\frac{D}{k_2}z_x\right)\,\overline{z} \right]^\beta_\alpha-\int_\alpha^\beta\left(y_{x}+\frac{D}{k_2}z_x\right)  \overline{z}_x \, dx\right|}\nline

\displaystyle{
\leq \left|\left(y_{x}+\frac{D}{k_2}z_x\right)(\beta)\right| \left|{z}(\beta)\right|+\left|\left(y_{x}+\frac{D}{k_2}z_x\right)(\alpha)\right| \left|{z}(\alpha)\right|+\int_\alpha^\beta\left|y_{x}+\frac{D}{k_2}z_x\right| \left|{z}_x\right| \, dx
}
\nline
\displaystyle{
\leq \left|\left(y_{x}+\frac{D}{k_2}z_x\right)(\beta)\right| \left|{z}(\beta)\right|+\left|\left(y_{x}+\frac{D}{k_2}z_x\right)(\alpha)\right| \left|{z}(\alpha)\right|+o\left(\lambda^{-2}\right).
}
\end{array}
\end{equation}
Inserting \eqref{EQ--(4.29)} in \eqref{EQ--(4.28)}, we get
%%%%%%%%%%%%%%%%%%%%Equation%%%%%%%%%%%%%%%%%%%%%%
\begin{equation}\label{EQ--(4.30)}
\begin{array}{ll}

\displaystyle{
\left(\frac{3}{4}-\frac{k_2 c_{g'}}{\rho_2\lambda^{\frac{1}{2}}} -\frac{ k_1}{\rho_2 \lambda^2 }\right)\int_\alpha^\beta | z|^2\, dx }
\nline
\displaystyle{ \leq \frac{k_2 }{\rho_2\lambda}\left( \left|\left(y_{x}+\frac{D}{k_2}z_x\right)(\beta)\right| \left|{z}(\beta)\right|+ \left|\left(y_{x}+\frac{D}{k_2}z_x\right)(\alpha)\right| \left|{z}(\alpha)\right|\right) +o\left(\lambda^{-3}\right).
}
\end{array}
\end{equation}
Now, for $\zeta=\beta$ or $\zeta=\alpha$,  we have
%%%%%%%%%%%%%%%%%%%%Equation%%%%%%%%%%%%%%%%%%%%%%
\begin{equation*}
\frac{ k_2}{ \rho_2\lambda} \left|\left(y_{x}+\frac{D}{k_2}z_x\right)(\zeta)\right| \left|{z}(\zeta)\right|\leq 
\frac{ k_2\, \lambda^{-\frac{1}{2}}}{2\rho_2}|z(\zeta)|^2+\frac{ k_2\, \lambda^{-\frac{3}{2}}}{2\rho_2}\left|\left(y_{x}+\frac{D}{k_2}z_x\right)(\zeta) \right|^2.
\end{equation*}
Inserting the above equation in \eqref{EQ--(4.30)}, we get
%%%%%%%%%%%%%%%%%%%%Equation%%%%%%%%%%%%%%%%%%%%%%
\begin{equation*}
\begin{array}{ll}

\displaystyle{
\left(\frac{3}{4}-\frac{k_2 c_{g'}}{\rho_2\lambda^{\frac{1}{2}}} -\frac{ k_1}{\rho_2 \lambda^2 }\right)\int_\alpha^\beta | z|^2\, dx  \leq 

\frac{ k_2\, \lambda^{-\frac{3}{2}}}{2\rho_2}\left(\left|\left(y_{x}+\frac{D}{k_2}z_x\right)(\alpha) \right|^2+\left|\left(y_{x}+\frac{D}{k_2}z_x\right)(\beta) \right|^2\right)

}\nline\hspace{2cm}

\displaystyle{+\frac{ k_2\, \lambda^{-\frac{1}{2}}}{2\rho_2}\left(|z(\alpha)|^2+|z(\beta)|^2\right)+o\left(\lambda^{-3}\right).
}
\end{array}
\end{equation*}
Inserting Equations \eqref{EQ--(4.11)} and \eqref{EQ--(4.12)} in the above inequality, we obtain 
%%%%%%%%%%%%%%%%%%%%Equation%%%%%%%%%%%%%%%%%%%%%%
\begin{equation*}
\left(\frac{3}{4}-\frac{k_2 c_{g'}}{\rho_2\lambda^{\frac{1}{2}}} -\frac{ k_1}{\rho_2 \lambda^2 }\right)\int_\alpha^\beta | z|^2\, dx \leq  \left(\frac{1}{2}+\frac{k_2\, c_{g'}}{\rho_2 \lambda^{\frac{1}{2}}  }\right)
 \int_\alpha^\beta  |z|^2 \, dx+o\left(\lambda^{-\frac{5}{2}}\right),
\end{equation*}
consequently,
%%%%%%%%%%%%%%%%%%%%Equation%%%%%%%%%%%%%%%%%%%%%%
\begin{equation*}
\left(\frac{1}{4}-\frac{2k_2 c_{g'}}{\rho_2\lambda^{\frac{1}{2}}} -\frac{ k_1}{\rho_2 \lambda^2 }\right)
 \int_\alpha^\beta  |z|^2 \, dx\leq  o\left(\lambda^{-\frac{5}{2}}\right),
\end{equation*}
since $\lambda\to+\infty$, for $\lambda$ large enough, we get 
%%%%%%%%%%%%%%%%%%%%Equation%%%%%%%%%%%%%%%%%%%%%%
\begin{equation*}
 0 < \left(\frac{1}{4}-\frac{2k_2 c_{g'}}{\rho_2\lambda^{\frac{1}{2}}} -\frac{ k_1}{\rho_2 \lambda^2 }\right)  \int_\alpha^\beta  |z|^2 \, dx\leq o\left(\lambda^{-\frac{5}{2}}\right), 
\end{equation*}
hence, we get the first asymptotic estimate of  \eqref{EQ--(4.20)}. Then, inserting the first asymptotic estimate of  \eqref{EQ--(4.20)} in \eqref{EQ--(4.6)}, we get the second asymptotic estimate of  \eqref{EQ--(4.20)}. Finally, inserting \eqref{EQ--(4.20)} in \eqref{EQ--(4.12)}, we get  \eqref{EQ--(4.21)}. Thus, the proof is complete.
\end{proof}
%%%%%%%%%%%%%%%%%%%%%%%%%%%%%%%%%%%%%%%%%%%%%%%%%%
                % Lemma %
%%%%%%%%%%%%%%%%%%%%%%%%%%%%%%%%%%%%%%%%%%%%%%%%%%
\begin{lem}\label{Lemma-4.7} 
\rm{Under hypothesis {\rm (H)}, for $j=1,2,$   for $\lambda$ large enough,  we have
%%%%%%%%%%%%%%%%%%%%Equation%%%%%%%%%%%%%%%%%%%%%%
\begin{equation}\label{EQ--(4.31)}
\int_\alpha^\beta  |u_x|^2 \, dx= o\left(1\right)\ \ \ \text{and} \ \ \ \int_\alpha^\beta  |v|^2 \, dx= o\left(1\right).
\end{equation}}
\end{lem}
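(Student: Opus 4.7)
I will establish the two estimates sequentially: first derive $\int_\alpha^\beta|u_x|^2\,dx=o(1)$ by testing \eqref{EQ--(4.7)} against $\overline{u}_x+\overline{y}$, then deduce $\int_\alpha^\beta|v|^2\,dx=o(1)$ by testing \eqref{EQ--(4.5)} against $\overline{u}$ and invoking the first estimate. The key inputs are the interior smallness of $y,z,y_x,z_x$ from Lemmas \ref{Lemma-4.5} and \ref{Lemma-4.6}, together with the boundary estimates collected in Lemmas \ref{Lemma-4.4}--\ref{Lemma-4.6}, and the fact that $u_x+y$ is uniformly bounded in $L^2(0,L)$ by the normalization $\|U\|_{\mathcal{H}_j}=1$.

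\noindent\textbf{Step 1 (estimate on $\int|u_x|^2$).} Rearrange \eqref{EQ--(4.7)} as
\begin{equation*}
k_1(u_x+y)=(k_2 y_x+D z_x)_x-i\lambda\rho_2 z+\rho_2\lambda^{-2}f_4,
\end{equation*}
multiply by $\overline{u}_x+\overline{y}$, integrate over $(\alpha,\beta)$, and take the real part. The $f_4$-contribution is $O(\lambda^{-2})$. The damping contribution $-i\lambda\rho_2\int z(\overline{u}_x+\overline{y})\,dx$ is $o(\lambda^{-1/4})$ by Cauchy--Schwarz and \eqref{EQ--(4.20)}. For the principal piece, integration by parts produces a boundary term, which is $o(\lambda^{-1/2})$ by \eqref{EQ--(4.21)} and \eqref{EQ--(4.16)}, and an interior integral featuring $\overline{u}_{xx}+\overline{y}_x$. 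Substituting $\overline{u}_{xx}=-\overline{y}_x-\frac{i\rho_1\lambda}{k_1}\overline{v}-\frac{\rho_1}{k_1}\lambda^{-2}\overline{f}_2$ from \eqref{EQ--(4.5)}, the $\overline{y}_x$-contributions cancel exactly and what remains is
\begin{equation*}
\frac{i\rho_1\lambda}{k_1}\int_\alpha^\beta(k_2 y_x+D z_x)\overline{v}\,dx + O(\lambda^{-3}),
\end{equation*}
which is $o(1)$ by Cauchy--Schwarz using \eqref{EQ--(4.9)}--\eqref{EQ--(4.10)} and the boundedness of $\|v\|$. Collecting these bounds yields $\int_\alpha^\beta|u_x+y|^2\,dx=o(1)$, and combining with $\int_\alpha^\beta|y|^2\,dx=o(\lambda^{-9/2})$ from \eqref{EQ--(4.20)} gives the first half of \eqref{EQ--(4.31)}.

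\noindent\textbf{Step 2 (estimate on $\int|v|^2$).} Multiply \eqref{EQ--(4.5)} by $\overline{u}$ and integrate over $(\alpha,\beta)$. Writing \eqref{EQ--(4.4)} in the form $\overline{u}=\frac{i}{\lambda}\overline{v}+\frac{i}{\lambda^3}\overline{f}_1$ gives $i\lambda\int v\overline{u}\,dx=-\int|v|^2\,dx+O(\lambda^{-2})$. Integration by parts of the remaining term produces the boundary contribution $[(u_x+y)\overline{u}]_\alpha^\beta=O(\lambda^{-1})$ via \eqref{EQ--(4.16)}--\eqref{EQ--(4.17)}, and the interior integral $\int(u_x+y)\overline{u}_x\,dx=\int|u_x|^2\,dx+\int y\overline{u}_x\,dx$, whose last summand is $o(1)$ by Cauchy--Schwarz and \eqref{EQ--(4.20)}. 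Taking the real part and invoking Step 1, we obtain $\int_\alpha^\beta|v|^2\,dx=\frac{k_1}{\rho_1}\int_\alpha^\beta|u_x|^2\,dx+O(\lambda^{-1})=o(1)$.

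\noindent\textbf{Main obstacle.} The delicate point lies in Step 1: after integration by parts the interior integral features $\overline{u}_{xx}$, which is not controlled directly by any previous lemma. The remedy of substituting from \eqref{EQ--(4.5)} works because of the algebraic cancellation of $\overline{y}_x$, so the surviving integral carries only a single factor of $\lambda$ multiplying $(k_2 y_x+D z_x)\overline{v}$. This factor of $\lambda$ is precisely absorbed by the $o(\lambda^{-1})$ bound on $\|z_x\|_{L^2(\alpha,\beta)}$ provided by \eqref{EQ--(4.9)}, yielding an $o(1)$ estimate. Without this cancellation the substitution would produce a $\int|y_x|^2\,dx$-like quantity whose control would require stronger smallness than available, and the final bound would not close.
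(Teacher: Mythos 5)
Your proof is correct and follows essentially the same route as the paper: substituting $\overline{u}_{xx}+\overline{y}_x=-\frac{i\rho_1\lambda}{k_1}\overline{v}-\frac{\rho_1}{k_1}\lambda^{-2}\overline{f}_2$ from \eqref{EQ--(4.5)} into the integrated-by-parts term is algebraically the same as the paper's device of adding the identity obtained by testing \eqref{EQ--(4.5)} against $\frac{\rho_1k_2}{k_1^2}\left(\overline{y}_x+\frac{D}{k_2}\overline{z}_x\right)$, and it yields the same surviving term $\frac{i\rho_1\lambda}{k_1}\int_\alpha^\beta\left(k_2y_x+Dz_x\right)\overline{v}\,dx=o(1)$, the same boundary contributions controlled by \eqref{EQ--(4.16)} and \eqref{EQ--(4.21)}, and the same $o\left(\lambda^{-1/4}\right)$ bound on the damping term via \eqref{EQ--(4.20)}. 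In Step 2 your multiplier $\overline{u}$ coincides, through \eqref{EQ--(4.4)}, with the paper's multiplier $-i\lambda^{-1}\overline{v}$ up to sign and a negligible $f_1$-correction, and both versions close the argument using the Step 1 estimate together with the boundary bounds \eqref{EQ--(4.16)}--\eqref{EQ--(4.18)}.
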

%%%%%%%%%%%%%%%%%%%%%%%%%%%%%%%%%%%%%%%%%%%%%%%%%%
                % Proof of  Lemma  %
%%%%%%%%%%%%%%%%%%%%%%%%%%%%%%%%%%%%%%%%%%%%%%%%%%
\begin{proof} The proof is divided into two  steps.\\[0.1in]
%%%%%%%%%%%%%%%%%%%%%%Step%%%%%%%%%%%%%%%%%%%%%%%%
\textbf{Step 1.} In this step, we prove the  first  asymptotic behavior estimate of \eqref{EQ--(4.31)}. First, multiplying Equation \eqref{EQ--(4.7)} by $ \frac{\rho_2}{k_1}\left(\overline{u}_x+\overline{y}\right)$   and integrating over   $(\alpha,\beta)$,  we get
%%%%%%%%%%%%%%%%%%%%Equation%%%%%%%%%%%%%%%%%%%%%%
\begin{equation*}
\int_{\alpha}^\beta\left|u_x+y\right|^2dx- \frac{k_2}{k_1}\int_{\alpha}^\beta\left(y_{x}+\frac{D}{k_2}z_x\right)_x\left(\overline{u}_x+\overline{y}\right)dx=-\frac{i\rho_2\lambda}{k_1} \int_{\alpha}^\beta z\left(\overline{u}_x+\overline{y}\right)dx+\frac{\rho_2}{k_1\lambda^{2}}\int_{\alpha}^\beta f_{4}\left(\overline{u}_x+\overline{y}\right)dx,
\end{equation*}
using by parts integration in the second term in the left hand side of above equation, we get
%%%%%%%%%%%%%%%%%%%%Equation%%%%%%%%%%%%%%%%%%%%%%
\begin{equation}\label{EQ--(4.32)}
\begin{array}{ll}
\displaystyle{
\int_{\alpha}^\beta\left|u_x+y\right|^2dx+ \frac{k_2}{k_1}\int_{\alpha}^\beta\left(y_{x}+\frac{D}{k_2}z_x\right)\left(\overline{u}_x+\overline{y}\right)_xdx

=\frac{k_2}{k_1}\left[\left(y_{x}+\frac{D}{k_2}z_x\right)\left(\overline{u}_x+\overline{y}\right)\right]_{\alpha}^\beta}\nline
\displaystyle{
-\frac{i\rho_2\lambda}{k_1} \int_{\alpha}^\beta z\left(\overline{u}_x+\overline{y}\right)dx+\frac{\rho_2}{k_1\lambda^{2}}\int_{\alpha}^\beta f_{4}\left(\overline{u}_x+\overline{y}\right)dx}.
\end{array}
\end{equation}
Next, multiplying Equation \eqref{EQ--(4.5)} by $ \frac{\rho_1k_2}{k_1^2} \left(\overline{y}_x+\frac{D}{k_2}\overline{z}_x\right)$   and integrating over  $(\alpha,\beta)$, then  using  the fact that $f_2\to 0$ in $L^2(0,L)$ and  Equations  \eqref{EQ--(4.9)}-\eqref{EQ--(4.10)},  we get
%%%%%%%%%%%%%%%%%%%%Equation%%%%%%%%%%%%%%%%%%%%%%
\begin{equation*}
 -\frac{k_2}{k_1 }\int_{\alpha}^\beta \left(\overline{y}_x+\frac{D}{k_2}\overline{z}_x\right)\left(u_x+y\right)_xdx
=-\frac{i\rho_1k_2\, \lambda }{k_1^2} \int_{\alpha}^\beta v \left(\overline{y}_x+\frac{D}{k_2}\overline{z}_x\right)dx+\frac{\rho_1k_2}{k_1^2\lambda^{2}}\int_{\alpha}^\beta f_{2}\left(\overline{y}_x+\frac{D}{k_2}\overline{z}_x\right)dx,
\end{equation*}
consequently, 
%%%%%%%%%%%%%%%%%%%%Equation%%%%%%%%%%%%%%%%%%%%%%
\begin{equation}\label{EQ--(4.33)}
 -\frac{k_2}{k_1 }\int_{\alpha}^\beta \left({y}_x+\frac{D}{k_2}{z}_x\right)\left(\overline{u}_x+\overline{y}\right)_xdx
=\frac{i\rho_1k_2\, \lambda }{k_1^2} \int_{\alpha}^\beta \overline{v} \left({y}_x+\frac{D}{k_2}{z}_x\right)dx+\frac{\rho_1k_2}{k_1^2 \lambda^{2}}\int_{\alpha}^\beta \overline{f}_{2}\left({y}_x+\frac{D}{k_2}{z}_x\right)dx.
\end{equation}
Adding \eqref{EQ--(4.32)} and \eqref{EQ--(4.33)}, we obtain
%%%%%%%%%%%%%%%%%%%%Equation%%%%%%%%%%%%%%%%%%%%%%
\begin{equation*}
\begin{array}{ll}
\displaystyle{
\int_{\alpha}^\beta\left|u_x+y\right|^2dx

=-\frac{i\rho_2\lambda}{k_1} \int_{\alpha}^\beta z\left(\overline{u}_x+\overline{y}\right)dx+\frac{k_2}{k_1}\left[\left(y_{x}+\frac{D}{k_2}z_x\right)\left(\overline{u}_x+\overline{y}\right)\right]_{\alpha}^\beta}\nline
\displaystyle{
+\frac{i\rho_1k_2\, \lambda }{k_1^2} \int_{\alpha}^\beta \overline{v} \left({y}_x+\frac{D}{k_2}{z}_x\right)dx+\frac{\rho_2}{k_1\lambda^{2}}\int_{\alpha}^\beta f_{4}\left(\overline{u}_x+\overline{y}\right)dx+\frac{\rho_1k_2}{k_1^2 \lambda^{2}}\int_{\alpha}^\beta \overline{f}_{2}\left({y}_x+\frac{D}{k_2}{z}_x\right)dx
},
\end{array}
\end{equation*}
therefore
%%%%%%%%%%%%%%%%%%%%Equation%%%%%%%%%%%%%%%%%%%%%%
\begin{equation}\label{EQ--(4.34)}
\begin{array}{ll}
\displaystyle{
\int_{\alpha}^\beta\left|u_x+y\right|^2dx

\leq\frac{\rho_2\lambda}{k_1} \int_{\alpha}^\beta \left|z\right|\left|{u}_x+{y}\right|dx+\frac{k_2}{k_1}
\left|\left(y_{x}+\frac{D}{k_2}z_x\right)(\beta)\right|\left|{u}_x(\beta)+{y}(\beta)\right|

}\nline
\displaystyle{+\frac{k_2}{k_1}
\left|\left(y_{x}+\frac{D}{k_2}z_x\right)(\alpha)\right|\left|{u}_x(\alpha)+{y}(\alpha)\right|
+\frac{\rho_1k_2\, \lambda }{k_1^2} \int_{\alpha}^\beta \left|{v}\right| \left|{y}_x+\frac{D}{k_2}{z}_x\right|dx}\nline
\displaystyle{+\frac{\rho_2}{k_1\lambda^{2}}\int_{\alpha}^\beta \left|f_{4}\right|\left|{u}_x+{y}\right|dx+\frac{\rho_1k_2}{k_1^2 \lambda^{2}}\int_{\alpha}^\beta \left|{f}_{2}\right|\left|{y}_x+\frac{D}{k_2}{z}_x\right|dx
}.
\end{array}
\end{equation}
From \eqref{EQ--(4.3)}, \eqref{EQ--(4.9)}, \eqref{EQ--(4.10)}, \eqref{EQ--(4.16)},  \eqref{EQ--(4.20)}, \eqref{EQ--(4.21)} and  the fact that $v,\ u_x+y$ are uniformly bounded in $L^2(\alpha,\beta)$, we obtain 
%%%%%%%%%%%%%%%%%%%%Equation%%%%%%%%%%%%%%%%%%%%%%
\begin{equation*}
\left\{
\begin{array}{ll}
\displaystyle{\left|\left(y_{x}+\frac{D}{k_2}z_x\right)(\beta)\right|\left|{u}_x(\beta)+{y}(\beta)\right|=o\left(\lambda^{-\frac{1}{2}}\right)},\ \displaystyle{\left|\left(y_{x}+\frac{D}{k_2}z_x\right)(\alpha)\right|\left|{u}_x(\alpha)+{y}(\alpha)\right|=o\left(\lambda^{-\frac{1}{2}}\right)},\nline

\displaystyle{\lambda \int_{\alpha}^\beta \left|z\right|\left|{u}_x+{y}\right|dx=o\left(\lambda^{-\frac{1}{4}}\right)
,\ \lambda\int_{\alpha}^\beta \left|{v}\right| \left|{y}_x+\frac{D}{k_2}{z}_x\right|dx=o(1)},\nline
\displaystyle{\lambda^{-2}\int_{\alpha}^\beta \left|f_{4}\right|\left|{u}_x+{y}\right|dx=o\left(\lambda^{-2}\right),\ 
\lambda^{-2}\int_{\alpha}^\beta \left|{f}_{2}\right|\left|{y}_x+\frac{D}{k_2}{z}_x\right|dx=o\left(\lambda^{-3}\right)}.
\end{array}
\right.
\end{equation*}
Inserting  the above equation in \eqref{EQ--(4.34)}, we get
%%%%%%%%%%%%%%%%%%%%Equation%%%%%%%%%%%%%%%%%%%%%%
\begin{equation*}
\int_{\alpha}^\beta\left|u_x+y\right|^2dx=o(1).
\end{equation*}
From the above equation and \eqref{EQ--(4.20)}, we get  the first asymptotic estimate of  \eqref{EQ--(4.31)}.\\[0.1in]
%%%%%%%%%%%%%%%%%%%%%%Step%%%%%%%%%%%%%%%%%%%%%%%%
\textbf{Step 2.} In this step, we prove the  second asymptotic behavior estimate of \eqref{EQ--(4.31)}. Multiplying  \eqref{EQ--(4.5)} by $-i\lambda^{-1} \overline{v}$ and integrating over $(\alpha,\beta),$ then taking the real part, we get
%%%%%%%%%%%%%%%%%%%%Equation%%%%%%%%%%%%%%%%%%%%%%
\begin{equation*}
\int_\alpha^\beta\left| v\right|^2dx=-\frac{ k_1  }{\rho_1\lambda }\Re\left\{i\int_\alpha^\beta(u_x+y)_x\overline{v}\, dx\right\} -\lambda^{-3}\Re\left\{i\int_\alpha^\beta f_{2}\overline{v}\, dx\right\},
\end{equation*}
 using by parts integration in the second term in the right hand side of above equation, we get
%%%%%%%%%%%%%%%%%%%%Equation%%%%%%%%%%%%%%%%%%%%%%
\begin{equation*}
\int_\alpha^\beta\left| v\right|^2dx=\frac{ k_1  }{\rho_1\lambda }\Re\left\{i\int_\alpha^\beta \left(u_x+y\right){\overline{v}_x}dx\right\} -\frac{k_1}{\rho_1 {\lambda} }\Re\left\{i \left[(u_x+y)\overline{v}\right]_\alpha^\beta\right\}-\lambda^{-3}\Re\left\{i\int_\alpha^\beta f_{2}\overline{v}\, dx\right\}.
\end{equation*}
consequently, 
%%%%%%%%%%%%%%%%%%%%Equation%%%%%%%%%%%%%%%%%%%%%%
\begin{equation}\label{EQ--(4.35)}
\begin{array}{ll}
\displaystyle{
\int_\alpha^\beta\left| v\right|^2dx\leq \frac{ k_1  }{\rho_1\lambda }\int_\alpha^\beta \left|u_x+y\right|\left|{v}_x\right| dx +\frac{k_1}{\rho_1 {\lambda} }\left( |u_x(\beta)+y(\beta)||{v}(\beta)|+|u_x(\alpha)+y(\alpha)|{v}(\alpha)|\right)}\nline \displaystyle{+\lambda^{-3}\int_\alpha^\beta \left|f_{2}\right|\left|{v}\right| dx.}
\end{array}
\end{equation}
Finally, from \eqref{EQ--(4.16)}, \eqref{EQ--(4.18)}, \eqref{EQ--(4.20)}, the first asymptotic behavior estimate of   \eqref{EQ--(4.31)}, the fact that $\lambda^{-1} v_x,\ v$ are uniformly bounded in $L^2(\alpha,\beta)$ and the fact that $f_2\to0$ in $L^2(\alpha,\beta)$, we get the second asymptotic behavior estimate of \eqref{EQ--(4.20)}.   Thus, the proof is complete.
\end{proof}$\\[0.1in]$
%%%%%%%%%%%%%%%%%%%%%%%%%%%%%%%%%%%%%%%%%%%%%%%%%%
From what precedes, under hypothesis {\rm (H)}, for $j=1,2,$  from Lemmas \ref{Lemma-4.5}, \ref{Lemma-4.6} and \ref{Lemma-4.7},     we deduce that
%%%%%%%%%%%%%%%%%%%%Equation%%%%%%%%%%%%%%%%%%%%%%
\begin{equation}\label{EQ--(4.36)}
\|U\|_{\mathcal{H}_j}=o\left(1\right),\quad\text{over } \left(\alpha,\beta\right).
\end{equation}
%%%%%%%%%%%%%%%%%%%%%%%%%%%%%%%%%%%%%%%%%%%%%%%%%%
                % Lemma %
%%%%%%%%%%%%%%%%%%%%%%%%%%%%%%%%%%%%%%%%%%%%%%%%%%
\begin{lem}\label{Lemma-4.8}
\rm{Under hypothesis {\rm (H)}, for $j=1,2,$    we have
%%%%%%%%%%%%%%%%%%%%Equation%%%%%%%%%%%%%%%%%%%%%%
 \begin{equation*}
\|U\|_{\mathcal{H}_j}=o\left(1\right),\quad\text{over } \left(0,L\right).
\end{equation*}}
\end{lem}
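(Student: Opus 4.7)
The strategy is to propagate the smallness already established on the damped interval $(\alpha,\beta)$ in \eqref{EQ--(4.36)} to the two undamped subintervals $(0,\alpha)$ and $(\beta,L)$. On each of these subintervals the operator $\mathcal{A}_j$ degenerates to an \emph{undamped} Timoshenko system whose right-hand sides are of order $o(\lambda^{-2})$ in $L^2$, and for which one endpoint carries the boundary conditions \eqref{E--(1.3)} or \eqref{E--(1.4)} while the opposite, internal endpoint carries Cauchy data coming from the interface $\alpha$ or $\beta$. The plan is therefore to (i) convert \eqref{EQ--(4.36)} together with the previous trace estimates into sharp pointwise bounds at $\alpha$ and $\beta$, and (ii) run a multiplier argument on each undamped subinterval to close the energy identity.

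For step (i), I would combine \eqref{EQ--(4.20)}, \eqref{EQ--(4.10)} and \eqref{EQ--(4.31)} with the continuous embedding $H^1(\alpha,\beta)\hookrightarrow C([\alpha,\beta])$ to deduce that $|u(\alpha)|$, $|y(\alpha)|$, $|(u_x+y)(\alpha)|$, $|y_x(\alpha)|$, $|v(\alpha)|$, $|z(\alpha)|$ and their counterparts at $\beta$ are all $o(1)$ (in fact, with quantitative rates in $\lambda$ that sharpen \eqref{EQ--(4.16)}--\eqref{EQ--(4.18)}). The estimates \eqref{EQ--(4.4)} and \eqref{EQ--(4.6)} then convert the traces of $u,y$ into traces of $v,z$, and \eqref{EQ--(4.21)} together with \eqref{EQ--(4.9)} yields a trace estimate on $y_x$ itself at $\alpha,\beta$.

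For step (ii), I would apply a Morawetz--type multiplier on $(0,\alpha)$: let $h\in C^1([0,\alpha])$ with $h(0)=0$ and $h(\alpha)=1$, multiply \eqref{EQ--(4.5)} by $h\,\overline{(u_x+y)}$ and \eqref{EQ--(4.7)} by $h\,\overline{y_x}$, integrate by parts on $(0,\alpha)$, take real parts, and sum. The boundary contributions at $x=0$ vanish by \eqref{E--(1.3)} or \eqref{E--(1.4)}, the boundary contributions at $x=\alpha$ are $o(1)$ by step (i), and the source terms $\lambda^{-2}f_{k,n}$ produce $o(1)$ contributions after Cauchy--Schwarz. Using \eqref{EQ--(4.4)} and \eqref{EQ--(4.6)} to rewrite $(i\lambda v)\cdot\overline{(u_x+y)}$ and $(i\lambda z)\cdot\overline{y_x}$ in terms of $|v|^2,|z|^2$ plus harmless remainders, the resulting identity should furnish
\[
\int_0^\alpha \bigl(|v|^2+|z|^2+|u_x+y|^2+|y_x|^2\bigr)\,dx = o(1),
\]
and a mirror-image argument on $(\beta,L)$, with a cut-off vanishing at $L$, handles the right subinterval. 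Combined with \eqref{EQ--(4.36)}, this yields the claim $\|U\|_{\mathcal{H}_j}=o(1)$ on $(0,L)$.

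The main obstacle is the coupling term $\tfrac{k_1}{\rho_2}(u_x+y)$ in \eqref{EQ--(4.7)}: a separate multiplier for each equation leaves a cross-term $\int_0^\alpha h(u_x+y)\overline{y_x}\,dx$ that is neither obviously positive nor manifestly $o(1)$. The remedy is to exploit an exact cancellation produced by integrating by parts on the simultaneous pair of multipliers — the cross-terms coming from $\int h\,(u_x+y)_x\,\overline{y_x}$ and $\int h\,y_{xx}\,\overline{(u_x+y)}$ combine into a total derivative plus a symmetric bilinear form controlled by $\int |y_x|^2+\int|u_x+y|^2$ and by boundary traces that have already been shown to be $o(1)$. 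Any residual low-order terms involving $u,y$ themselves are absorbed via \eqref{EQ--(4.8)}. Once this cancellation is carried out and the same scheme is applied on $(\beta,L)$, the lemma follows.
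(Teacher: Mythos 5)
Your overall architecture --- propagating the smallness \eqref{EQ--(4.36)} from the damped region into $(0,\alpha)$ and $(\beta,L)$ by first-order multipliers --- is viable, but step (i) has a genuine gap, and it occurs at exactly the two traces your step (ii) cannot do without. Because your multiplier satisfies $h(0)=0$, $h(\alpha)=1$, the summed identity on $(0,\alpha)$ has the structure
\begin{equation*}
\int_0^\alpha h'\Bigl(\rho_1|v|^2+k_1|u_x+y|^2+\rho_2|z|^2+k_2\bigl|y_x+\tfrac{D}{k_2}z_x\bigr|^2\Bigr)dx
= h(\alpha)\cdot(\text{traces at }\alpha)+\text{cross terms}+o(1),
\end{equation*}
so the interface traces sit on the controlling side with the unfavourable sign: the $O(1)$ bounds \eqref{EQ--(4.16)} and \eqref{EQ--(4.18)} are useless here, and you genuinely need $o(1)$. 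The embedding $H^1(\alpha,\beta)\hookrightarrow C([\alpha,\beta])$ cannot deliver this for $v$ and $u_x+y$: by \eqref{EQ--(4.4)} one has $\|v_x\|_{L^2(\alpha,\beta)}\leq \lambda\|u_x\|_{L^2(\alpha,\beta)}+o(1)=o(\lambda)$, and by \eqref{EQ--(4.5)} similarly $\|(u_x+y)_x\|_{L^2(\alpha,\beta)}=o(\lambda)$, so trace interpolation only gives $|v(\alpha)|^2=o(\lambda)$ and $|(u_x+y)(\alpha)|^2=o(\lambda)$, far from $o(1)$. (For $u$, $y$, $z$ your argument is fine, since \eqref{EQ--(4.9)}, \eqref{EQ--(4.10)} and \eqref{EQ--(4.20)} control the derivatives.) The gap is repairable: rerun the $g$-multiplier identity behind \eqref{EQ--(4.16)}--\eqref{EQ--(4.18)} \emph{after} Lemma \ref{Lemma-4.7}; its right-hand side, which involves $\int_\alpha^\beta g'\bigl(|u_x+y|^2+\tfrac{\rho_1}{k_1}|v|^2\bigr)dx$ and $\lambda^2\Re\int_\alpha^\beta g\,u\overline{y}\,dx$, is then $o(1)$ by \eqref{EQ--(4.31)}, \eqref{EQ--(4.8)} and \eqref{EQ--(4.20)}, which upgrades these traces to $o(1)$. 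Two further corrections: under {\rm (H)} the coefficient $D$ is merely a nonnegative $L^\infty$ function outside $[\alpha,\beta]$, so nothing ``degenerates to an undamped system'' on $(0,\alpha)$; and since only $(k_2y_x+Dz_x)_x\in L^2$ (not $y_{xx}$) by the definition of $D(\mathcal{A}_j)$, the multiplier on \eqref{EQ--(4.7)} must be $h\bigl(\overline{y}_x+\tfrac{D}{k_2}\overline{z}_x\bigr)$, with the extra $D$-terms absorbed via \eqref{EQ--(4.9)} as in \eqref{EQ--(4.40)}; correspondingly the relevant trace is the combination $(y_x+\tfrac{D}{k_2}z_x)(\alpha)$ supplied by \eqref{EQ--(4.21)} --- you cannot extract $y_x(\alpha)$ alone from \eqref{EQ--(4.21)} and \eqref{EQ--(4.9)}, because an $L^2$ bound on $z_x$ gives no pointwise value.

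It is worth contrasting this with the paper's proof, which sidesteps every one of these issues: it establishes the identity \eqref{EQ--(4.43)} \emph{globally} on $(0,L)$ with a single weight $\phi\in H_0^1(0,L)$ (so the only boundary terms are at $0$ and $L$, where they vanish, and no interface traces ever appear), exploiting the exact cancellation of the coupling terms $-2k_1\Re\int\phi\,\overline{u}_xy_x\,dx$ and $+2k_1\Re\int\phi\,\overline{y}_xu_x\,dx$ upon adding the two multiplied equations; it then chooses $\phi=x\varsigma_1$ with a cut-off whose transition region $(\alpha,\alpha+\epsilon)$ lies inside $(\alpha,\beta)$, so $\phi'=1$ on $(0,\alpha)$ and the remaining contribution of $\phi'$ is $o(1)$ by \eqref{EQ--(4.36)}. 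Your cancellation mechanism for the cross terms is essentially the same; if you also adopt the paper's choice of weight, the trace estimates (and hence the gap above) become unnecessary.
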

%%%%%%%%%%%%%%%%%%%%%%%%%%%%%%%%%%%%%%%%%%%%%%%%%%
                % Proof of  Lemma  %
%%%%%%%%%%%%%%%%%%%%%%%%%%%%%%%%%%%%%%%%%%%%%%%%%%
\begin{proof}  Let $\phi\in H^1_0\left(0,L\right)$ be a given function. We proceed the proof in two steps.\\[0.1in]
%%%%%%%%%%%%%%%%%%%%%%Step%%%%%%%%%%%%%%%%%%%%%%%%
\textbf{Step 1.} Multiplying Equation \eqref{EQ--(4.5)} by $2{\rho_1 } \phi  \overline{u}_x$ and integrating over $(\alpha,\beta),$  then using the fact that $u_x$ is bounded in $L^2(0,L)$, $f_2\to 0$ in $ L^2(0,L)$,  and use Dirichlet boundary conditions to get
%%%%%%%%%%%%%%%%%%%%Equation%%%%%%%%%%%%%%%%%%%%%%
\begin{equation}\label{EQ--(4.37)}
\Re\left\{{2i\rho_1 } \lambda   \int_0^L \phi v\overline{u}_x dx\right\}+ k_1\int_0^L \phi' |u_x|^2dx -\Re\left\{2k_1\int_0^L \phi \overline{u}_x y_x  dx\right\} =o(\lambda^{-2}) .
\end{equation}
From \eqref{EQ--(4.4)}, we have
%%%%%%%%%%%%%%%%%%%%Equation%%%%%%%%%%%%%%%%%%%%%%
\begin{equation*}
i{\lambda}\overline{u}_x=  -\overline{v}_x-\lambda^{-2}(\overline{f}_{1})_x.
\end{equation*}
 Inserting the above equation in \eqref{EQ--(4.37)}, then using the fact that $(f_1)_x\to 0 $ in $L^2(0,L)$ and the fact that $v$ is bounded  in $L^2(0,L)$, we get
%%%%%%%%%%%%%%%%%%%%Equation%%%%%%%%%%%%%%%%%%%%%%
 \begin{equation}\label{EQ--(4.38)}
{\rho_1 }   \int_0^L \phi'|v|^2 dx+k_1 \int_0^L \phi' |u_x|^2dx -\Re\left\{2k_1\int_0^L \phi \overline{u}_x y_x  dx\right\} =o(\lambda^{-2}) .
\end{equation}
 Similarly, multiplying Equation \eqref{EQ--(4.7)} by $2\rho_2  \phi \left( \overline{y}_x+\frac{D}{k_1} \overline{z}_x\right)$ and integrating over $(\alpha,\beta),$  then using by parts integration and Dirichlet boundary conditions to get
%%%%%%%%%%%%%%%%%%%%Equation%%%%%%%%%%%%%%%%%%%%%%
 \begin{equation}\label{EQ--(4.39)}
 \begin{array}{ll}
 \displaystyle{\Re\left\{{2i\rho_2} {\lambda}\int_0^L   \phi  z  \overline{y}_x dx\right\} + k_2\int_0^L   \phi'   \left|y_x+\frac{D}{k_2}z_x\right|^2   dx+\Re\left\{2k_1 \int_0^L   \phi  \overline{y}_x u_xdx\right\}}\nline
  \displaystyle{=-\lambda^{-1}\Re\left\{2k_1\int_0^L   \phi  \lambda y \overline{y}_x dx \right\}-\Re\left\{\frac{2i\rho_2 }{k_1} {\lambda}\int_0^L  D(x) \phi  z  \overline{z}_x dx\right\}-\Re\left\{2 \int_0^L  D(x)  \phi  \overline{z}_x u_xdx\right\}}\nline
   \displaystyle{ -\Re\left\{2\int_0^L D(x)  \phi  \overline{z}_x ydx \right\}+\Re\left\{{2\rho_2 } \lambda^{-2}\int_0^L \phi f_3\overline{y}_xdx\right\} +\Re\left\{\frac{2\rho_2 }{k_1} \lambda^{-2}\int_0^LD(x) \phi f_3\overline{z}_xdx\right\}}.
 \end{array}
 \end{equation}
For all bounded  $h\in L^2(0,L)$, using Cauchy-Schwarz inequality, the first estimation of \eqref{EQ--(4.9)},  and the fact that $D\in L^{\infty}(0,L)$, to obtain 
%%%%%%%%%%%%%%%%%%%%Equation%%%%%%%%%%%%%%%%%%%%%%
\begin{equation}\label{EQ--(4.40)}
\Re\left\{\int_0^L D(x) h \overline{z}_x dx\right\}\leq \left(\sup_{x\in (0,L)}D^{1/2}(x)\right)\left(\int_0^L D(x)  |{z}_x|^2 dx\right)^{1/2} \left(\int_0^L |h|^2 dx\right)^{1/2}=o(\lambda^{-1}).
\end{equation}
From \eqref{EQ--(4.39)} and  using \eqref{EQ--(4.40)}, the fact that $z,\ \lambda y,\ y_x$ are bounded in $L^2(0,L)$, the fact that $f_3\to 0$ in $L^2(0,L)$, we get 
%%%%%%%%%%%%%%%%%%%%Equation%%%%%%%%%%%%%%%%%%%%%%
  \begin{equation}\label{EQ--(4.41)}
\Re\left\{2i\rho_2  {\lambda}\int_0^L   \phi  z  \overline{y}_x dx\right\} +  k_2\int_0^L   \phi'   \left|y_x+\frac{D}{k_2}z_x\right|^2   dx+\Re\left\{2k_1 \int_0^L   \phi  \overline{y}_x u_xdx\right\}=o(1).
 \end{equation}
On the other hand, from \eqref{EQ--(4.6)}, we have
%%%%%%%%%%%%%%%%%%%%Equation%%%%%%%%%%%%%%%%%%%%%%
\begin{equation*}
i{\lambda}\overline{y}_x=  -\overline{z}_x-\lambda^{-2}(\overline{f}_{3})_x.
\end{equation*}
Inserting  the above equation  in \eqref{EQ--(4.41)}, then using the fact that $(f_3)_x\to 0 $ in $L^2(0,L)$ and the fact that $z$ is bounded  in $L^2(0,L)$, we get
%%%%%%%%%%%%%%%%%%%%Equation%%%%%%%%%%%%%%%%%%%%%%
  \begin{equation}\label{EQ--(4.42)}
\rho_2\int_0^L   \phi'  |z|^2dx+k_2\int_0^L   \phi'   \left|y_x+\frac{D}{k_2}z_x\right|^2   dx+\Re\left\{2k_1 \int_0^L   \phi  \overline{y}_x u_xdx\right\}=o(1).
 \end{equation}
Adding \eqref{EQ--(4.38)} and \eqref{EQ--(4.42)}, we get
%%%%%%%%%%%%%%%%%%%%Equation%%%%%%%%%%%%%%%%%%%%%%
\begin{equation}\label{EQ--(4.43)}
\int_0^L \phi'\left(\rho_1 |v|^2+\rho_2 |z|^2+k_1 |u_x|^2+k_2 \left|y_x+\frac{D}{k_2}z_x\right|^2 \right)dx=o(1).
\end{equation}
%%%%%%%%%%%%%%%%%%%%%%Step%%%%%%%%%%%%%%%%%%%%%%%%
\textbf{Step 2.}  Let $\epsilon>0$ such that $\alpha+\epsilon<\beta$ and define the cut-off function $\varsigma_1 \text{ in } C^1\left(\left[0,L\right]\right)$ by
%%%%%%%%%%%%%%%%%%%%Equation%%%%%%%%%%%%%%%%%%%%%%
\begin{equation*}
0\leq \varsigma_1 \leq 1,\ \varsigma_1=1 \text{ on } \left[0,\alpha\right] \text{ and } \varsigma_1=0 \text{ on } \left[\alpha+\epsilon,L\right].
\end{equation*}
Take $\phi=x  \varsigma_1$ in \eqref{EQ--(4.43)}, then use the fact that $\left\|U\right\|_{\mathcal{H}_j}=o\left(1\right)$ on $\left(\alpha,\beta\right)$ (i.e., \eqref{EQ--(4.36)}), the fact that $\alpha<\alpha+\epsilon<\beta$, and \eqref{EQ--(4.9)}-\eqref{EQ--(4.10)}, we get
%%%%%%%%%%%%%%%%%%%%Equation%%%%%%%%%%%%%%%%%%%%%%
\begin{equation}\label{EQ--(4.44)}
\int_0^\alpha \left(\rho_1 |v|^2+\rho_2 |z|^2+k_1 |u_x|^2+k_2 \left|y_x+\frac{D}{k_2}z_x\right|^2 \right)dx=o(1).
\end{equation}
Moreover, using Cauchy-Schwarz inequality, the first estimation of \eqref{EQ--(4.9)}, the fact that $D\in L^{\infty}(0,L)$, and \eqref{EQ--(4.44)}, we get
%%%%%%%%%%%%%%%%%%%%Equation%%%%%%%%%%%%%%%%%%%%%%
\begin{equation}\label{EQ--(4.45)}
\begin{array}{rl}
\displaystyle{\int_0^\alpha|y_x|^2dx}&\leq\displaystyle{ 2\int_0^\alpha\left|y_x+\frac{D}{k_2}z_x\right|^2dx+\frac{2}{k_2^2}\int_0^\alpha D(x)^2\left|z_x\right|^2dx},\nline
&\leq\displaystyle{2\int_0^\alpha\left|y_x+\frac{D}{k_2}z_x\right|^2dx+\frac{2 \left(\sup_{x\in (0,\alpha)}D(x)\right) }{k_2^2}\int_0^\alpha D(x)\left|z_x\right|^2dx},\nline
&=\displaystyle{o(1)}.
\end{array}
\end{equation}
Using \eqref{EQ--(4.44)} and \eqref{EQ--(4.45)}, we get
%%%%%%%%%%%%%%%%%%%%Equation%%%%%%%%%%%%%%%%%%%%%%
\begin{equation*}
\left\|U\right\|_{\mathcal{H}_j}=o\left(1\right)\text{ on }(0,\alpha).
\end{equation*}
Similarly, by symmetry, we can prove that $\left\|U\right\|_{\mathcal{H}_j}=o\left(1\right)\text{ on }(\beta,L)$ and therefore
%%%%%%%%%%%%%%%%%%%%Equation%%%%%%%%%%%%%%%%%%%%%%
\begin{equation*}
\left\|U\right\|_{\mathcal{H}_j}=o\left(1\right)\text{ on }(0,L).
\end{equation*}
Thus, the proof is complete.\end{proof}$\\[0.1in]$
%%%%%%%%%%%%%%%%%%%%%%%%%%%%%%%%%%%%%%%%%%%%%%%%%%
                % Proof of  Theorem  %
%%%%%%%%%%%%%%%%%%%%%%%%%%%%%%%%%%%%%%%%%%%%%%%%%%
\noindent \textbf{Proof of Theorem \ref{Theorem--4.1}.}  Under hypothesis {\rm (H)}, for $j=1,2,$ from Lemma \ref{Lemma-4.8}, we have $\|U\|_{\mathcal{H}_j}=o\left(1\right),$ over $(0,L)$, which contradicts \eqref{EQ--(4.2)}. This implies that $$\displaystyle{\sup_{\lambda\in\mathbb{R}}\left\|\left(i\lambda Id-\mathcal{A}_j\right)^{-1}\right\|_{\mathcal{L}\left(\mathcal{H}_j\right)}=O\left(\lambda^{2}\right)}.$$ The result follows from  Theorem \ref{Theorem--2.5} part (i). \xqed{$\square$}\\[0.1in]
%%%%%%%%%%%%%%%%%%%%%%%%%%%%%%%%%%%%%%%%%%%%%%%%%%	
It is very important to ask the question about the optimality of \eqref{E--(4.1)}. For the optimality of \eqref{E--(4.1)},  we first recall Theorem 3.4.1 stated  in \cite{Najdi-Thesis}.
%%%%%%%%%%%%%%%%%%%%%%%%%%%%%%%%%%%%%%%%%%%%%%%%%%
                % Theorem %
%%%%%%%%%%%%%%%%%%%%%%%%%%%%%%%%%%%%%%%%%%%%%%%%%% 
\begin{thm}\label{Theorem-4.9} 
\rm{Let $A:D(A)\subset H\to H $  generate a C$_0-$semigroup of contractions $\left(e^{t A}\right)_{t\geq0}$  on $H$.  Assume that $i\mathbb{R}\in  \rho(A)$. Let $\left(\lambda_{k,n}\right)_{1\leq k\leq k_0,\ n\geq 1}$ denote the k-th branch of eigenvalues of $A$ and $\left(e_{k,n}\right)_{1\leq k\leq k_0,\ n\geq 1}$ the system of normalized associated eigenvectors. Assume that for each $1\leq k\leq k_0$ there exist a positive sequence $\mu_{k,n}\to \infty$ as $n\to \infty$ and two positive constant $\alpha_k>0, \beta_k>0$ such that 
%%%%%%%%%%%%%%%%%%%%Equation%%%%%%%%%%%%%%%%%%%%%%
\begin{equation}\label{EQ--(4.46)}
\Re(\lambda_{k,n})\sim - \frac{\beta_k}{\mu_{k,n}^{\alpha_k}} \ \ \ \text{and}\ \ \ \Im(\lambda_{k,n})\sim \mu_{k,n}\ \ \ \text{as } n\to \infty.
\end{equation}
Here $\Im$ is used to denote the imaginary part of a complex number. Furthermore, assume that  for  $u_0\in D(A)$, there exists constant $M>0$ independent of $u_0$ such that
%%%%%%%%%%%%%%%%%%%%Equation%%%%%%%%%%%%%%%%%%%%%%
  \begin{equation}\label{EQ--(4.47)}
  \left\|e^{t A}u_0\right\|_{H}^2\leq \frac{M}{t^{\frac{2}{\ell_k}}}\left\|u_0\right\|_{D(A)}^2,\ \ \ell_k=\max_{1\leq k\leq k_0} \alpha_k,\ \ \forall \ t>0.
  \end{equation}
Then the decay rate \eqref{EQ--(4.47)} is optimal in the sense that for any $\epsilon>0$  we cannot expect the energy decay rate $t^{-\frac{2}{\ell_k}-\epsilon}.$\xqed{$\square$}
}\end{thm}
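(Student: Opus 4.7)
The plan is to argue by contradiction, combining the Borichev--Tomilov resolvent characterization of Theorem \ref{Theorem--2.5}(i) with a direct lower bound on the resolvent obtained by testing against the eigenvectors $e_{k^*,n}$ of the branch that attains the maximum $\ell_k=\max_{1\leq k\leq k_0}\alpha_k$. Suppose, for some $\varepsilon>0$, there were a constant $\widetilde M>0$ such that
\begin{equation*}
\|e^{tA}u_0\|_H^{2}\leq \widetilde M\, t^{-\frac{2}{\ell_k}-\varepsilon}\|u_0\|_{D(A)}^{2},\qquad \forall\, t>0,\ \forall\, u_0\in D(A).
\end{equation*}
Taking the square root, this is polynomial stability of order $1/\tilde\ell$ with $\tilde\ell=(1/\ell_k+\varepsilon/2)^{-1}$, and crucially $\tilde\ell<\ell_k$. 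By the iff statement in Theorem \ref{Theorem--2.5}(i), this would force the resolvent upper bound
\begin{equation*}
\|(i\lambda I-A)^{-1}\|_{\mathcal{L}(H)}=O(|\lambda|^{\tilde\ell})\qquad\text{as }|\lambda|\to\infty.
\end{equation*}

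Next, I would select $k^*\in\{1,\dots,k_0\}$ with $\alpha_{k^*}=\ell_k$, and for each $n$ set $\lambda_n:=\Im(\lambda_{k^*,n})\in\mathbb{R}$. Since $e_{k^*,n}$ is a normalized eigenvector with eigenvalue $\lambda_{k^*,n}$, one computes
\begin{equation*}
(i\lambda_n I-A)\,e_{k^*,n}=\bigl(i\lambda_n-\lambda_{k^*,n}\bigr)e_{k^*,n}=-\,\Re(\lambda_{k^*,n})\,e_{k^*,n},
\end{equation*}
which gives the elementary lower bound
\begin{equation*}
\|(i\lambda_n I-A)^{-1}\|_{\mathcal{L}(H)}\geq \frac{\|e_{k^*,n}\|_H}{\|(i\lambda_n I-A)e_{k^*,n}\|_H}=|\Re(\lambda_{k^*,n})|^{-1}.
\end{equation*}
From hypothesis \eqref{EQ--(4.46)} one has $|\Re(\lambda_{k^*,n})|^{-1}\sim \beta_{k^*}^{-1}\mu_{k^*,n}^{\ell_k}$ and $\lambda_n\sim\mu_{k^*,n}$, hence there exists $C>0$ and $n_0\in\mathbb{N}$ with $\|(i\lambda_n I-A)^{-1}\|_{\mathcal{L}(H)}\geq C\,\lambda_n^{\ell_k}$ for all $n\geq n_0$.

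Combining the assumed upper bound with this lower bound yields $C\,\lambda_n^{\ell_k}\leq C'\lambda_n^{\tilde\ell}$ along a sequence $\lambda_n\to\infty$, which is impossible because $\ell_k-\tilde\ell>0$. This contradiction rules out the existence of such an $\varepsilon$ and establishes the optimality of the rate $t^{-2/\ell_k}$. The whole proof is essentially bookkeeping once the two ingredients are in place: no conceptual obstacle is anticipated, since converting the asymptotic equivalences in \eqref{EQ--(4.46)} into two-sided inequalities valid for large $n$ is immediate from the definition of $\sim$, and the eigenvalue test for a resolvent lower bound is standard. If $\lambda$ were required to be strictly different from $\Im(\lambda_{k^*,n})$ (e.g.\ if $i\lambda_n\in\sigma(A)$), one would instead take $\lambda_n$ arbitrarily close to $\Im(\lambda_{k^*,n})$ and absorb the perturbation into the constants; this is harmless because the dominant scale $\mu_{k^*,n}^{\ell_k}$ is unaffected.
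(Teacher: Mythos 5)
Your proof is correct, and in fact the paper offers no proof of this statement to compare against: Theorem \ref{Theorem-4.9} is recalled verbatim (with a closing square) from Theorem 3.4.1 of the cited thesis \cite{Najdi-Thesis}. Your argument — contradiction via the Borichev--Tomilov characterization in Theorem \ref{Theorem--2.5}(i), combined with the eigenvector test $\left\|\left(i\,\Im(\lambda_{k^*,n})I-A\right)^{-1}\right\|_{\mathcal{L}(H)}\geq \left|\Re(\lambda_{k^*,n})\right|^{-1}$ along the branch attaining $\ell_k=\alpha_{k^*}$ — is the standard route for such optimality results and is the argument the cited source relies on in spirit; all steps check out (the asymptotics \eqref{EQ--(4.46)} do give $\left\|\left(i\lambda_n I-A\right)^{-1}\right\|\geq C\lambda_n^{\ell_k}$ with $\lambda_n\to\infty$, which is incompatible with $O\left(\lambda_n^{\tilde\ell}\right)$ for $\tilde\ell<\ell_k$). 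Two minor bookkeeping remarks: passing between the bound \eqref{EQ--(4.47)} stated with $\|u_0\|_{D(A)}$ and Definition \ref{Definition--2.3}(3) stated with $\|Au_0\|_{H}$ requires the equivalence of these norms on $D(A)$, which holds because $0\in\rho(A)$ follows from the standing hypothesis $i\mathbb{R}\subset\rho(A)$; and your closing caveat about the case $i\lambda_n\in\sigma(A)$ is vacuous for the same reason, so it can simply be deleted.
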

%%%%%%%%%%%%%%%%%%%%%%%%%%%%%%%%%%%%%%%%%%%%%%%%%%
\noindent In the next corollary, we show that the optimality of \eqref{E--(4.1)} in some cases.
%%%%%%%%%%%%%%%%%%%%%%%%%%%%%%%%%%%%%%%%%%%%%%%%%%
                % Corollary%
%%%%%%%%%%%%%%%%%%%%%%%%%%%%%%%%%%%%%%%%%%%%%%%%%%
\begin{cor}\label{Corollary-4.10} 
\rm{   For every $U_0\in D\left(\mathcal{A}_2\right)$, we have the following two cases:
%%%%%%%%%%%%%%%%%%Enumerate%%%%%%%%%%%%%%%%%%%%%%%
\begin{enumerate}
%%%%%%%%%%%%%%%%%%%%%%item%%%%%%%%%%%%%%%%%%%%%%%%
\item[1.] If condition \eqref{E--(3.1)} holds, then the energy decay rate in \eqref{E--(4.1)} is optimal. 
%%%%%%%%%%%%%%%%%%%%%%item%%%%%%%%%%%%%%%%%%%%%%%%
\item[2.] If condition \eqref{E--(3.4)} holds and if there exists  $\kappa_1\in\mathbb{N}$ such that  $c=\sqrt{\frac{k_1}{k_2}}=2\kappa_1 \pi$, then the energy decay rate in \eqref{E--(4.1)} is optimal. 
\end{enumerate}}
\end{cor}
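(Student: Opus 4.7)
My plan is to deduce Corollary~\ref{Corollary-4.10} from the groundwork already laid in Sections~\ref{Section--3} and~\ref{Section--4}: Case~1 from the direct counter\-example of Theorem~\ref{Theorem--3.1}, and Case~2 by applying the abstract optimality criterion of Theorem~\ref{Theorem-4.9} to the eigenvalue branches constructed in Proposition~\ref{Proposition--3.3}.

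For Case~1 I would simply quote Theorem~\ref{Theorem--3.1}: it already produces, for every small $\epsilon>0$, initial data in $D(\mathcal{A}_2)$ whose energy cannot decay at rate $t^{-2/(2-\epsilon)}$. Since $2/(2-\epsilon) = 1 + \epsilon/(2-\epsilon) > 1$, no rate $t^{-(1+\delta)}$ with $\delta>0$ is possible, and Theorem~\ref{Theorem--4.1} gives the matching upper bound $t^{-1}$. Optimality in the sense of Definition~\ref{Definition--2.3} then follows without needing to invoke Theorem~\ref{Theorem-4.9}.

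For Case~2 I would first observe that the hypothesis $c = 2\kappa_1\pi$ covers both the subcase $\kappa_1$ odd (so $\cos(c/4) = 0$, matching Case~2 of Proposition~\ref{Proposition--3.3}) and the subcase $\kappa_1$ even (so $\sin(c/4) = 0$, matching Case~3). In each subcase the asymptotic formulas \eqref{E--(3.26)}, \eqref{E--(3.27)}, \eqref{E--(3.28)} supply at least one eigenvalue branch of $\mathcal{A}_2$ satisfying
\begin{equation*}
\Re(\lambda_{k,n}) = -\frac{c^2}{16\pi^{2}\, n^{2}} + o\bigl(|n|^{-2}\bigr), \qquad \Im(\lambda_{k,n}) \sim 2\pi n.
\end{equation*}
Setting $\mu_{k,n} := 2\pi|n|$ matches hypothesis~\eqref{EQ--(4.46)} of Theorem~\ref{Theorem-4.9} with $\alpha_k = 2$ and $\beta_k = c^2/4$; the remaining branch contributes either $\alpha = 1/2$ (fast decay, in Case~2 of Proposition~\ref{Proposition--3.3}) or again $\alpha = 2$ (in Case~3), so in every situation $\ell_k = \max_k \alpha_k = 2$ and $2/\ell_k = 1$. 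Since Theorem~\ref{Theorem--4.1} provides exactly the upper bound~\eqref{EQ--(4.47)} corresponding to this exponent, Theorem~\ref{Theorem-4.9} concludes that $t^{-1}$ is optimal.

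The substantive work has already been done: the direct counter\-example in Theorem~\ref{Theorem--3.1} and, more delicately, the refined eigenvalue expansions in Proposition~\ref{Proposition--3.3}. What remains here is exponent bookkeeping, namely identifying $\alpha_k = 2$ in each relevant branch and verifying that the leading real part is genuinely of order $n^{-2}$ rather than being suppressed by cancellation at the next order. I do not foresee any real obstacle beyond this routine check.
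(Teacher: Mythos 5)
Your proposal is correct and follows essentially the same route as the paper: Case~1 is settled by quoting Theorem~\ref{Theorem--3.1} (with your slight elaboration on why excluding $t^{-2/(2-\epsilon)}$ rules out every rate faster than $t^{-1}$), and Case~2 applies Theorem~\ref{Theorem-4.9} to the two eigenvalue branches of Proposition~\ref{Proposition--3.3} (splitting $c=2\kappa_1\pi$ into the subcases $\cos(c/4)=0$ and $\sin(c/4)=0$, exactly as the paper does) with $\alpha_k\in\{1/2,2\}$, $\ell_k=2$, and the upper bound from Theorem~\ref{Theorem--4.1}. Your exponent bookkeeping, including $\beta_k=c^2/4$ for $\mu_{k,n}=2\pi|n|$, matches the paper's verification.
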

%%%%%%%%%%%%%%%%%%%%%%%%%%%%%%%%%%%%%%%%%%%%%%%%%%
                % Proof of  Corollary  %
%%%%%%%%%%%%%%%%%%%%%%%%%%%%%%%%%%%%%%%%%%%%%%%%%%
\begin{proof} We distinguish two cases:
%%%%%%%%%%%%%%%%%%Enumerate%%%%%%%%%%%%%%%%%%%%%%%
\begin{enumerate}
%%%%%%%%%%%%%%%%%%%%%%item%%%%%%%%%%%%%%%%%%%%%%%%
\item[1.] If condition \eqref{E--(3.1)} holds, then from Theorem  \ref{Theorem--3.1}, for $\epsilon>0\left(\text{small enough}\right)$, we cannot expect the energy decay rate $t^{-\frac{2}{{2-\epsilon}}}$ for all initial data $U_0\in D\left(\mathcal{A}_2\right)$ and for all $t>0.$ Hence  the energy decay rate in \eqref{E--(4.1)} is optimal. 
%%%%%%%%%%%%%%%%%%%%%%item%%%%%%%%%%%%%%%%%%%%%%%%
\item[2.] If condition \eqref{E--(3.4)} holds, first following Theorem \ref{Theorem--4.1}, for all initial data $U_0\in D\left(\mathcal{A}_2\right)$ and for all $t>0,$ we get \eqref{EQ--(4.47)} with $\ell_k=2$. Furthermore, from Proposition \ref{Proposition--3.3} (case 2 and case 3), we remark that: \\[0.1in]
%%%%%%%%%%%%%%%%%%%%%%Case%%%%%%%%%%%%%%%%%%%%%%%%
\noindent \textbf{Case 1.} If there exists  $\kappa_0\in\mathbb{N}$ such that  $c=2\left(2\kappa_0+1\right) \pi$, we have
%%%%%%%%%%%%%%%%%%%%Equation%%%%%%%%%%%%%%%%%%%%%%
\begin{equation*}
\left\{
\begin{array}{ll}
\displaystyle{\Re\left(\lambda_{1,n}\right)\sim  -\frac{1}{\pi^{1/2}  |n|^{1/2}},  \ \ \ \Im\left(\lambda_{1,n}\right)\sim 2 n\pi},\nline
\displaystyle{\Re\left(\lambda_{2,n}\right)\sim  -\frac{ c^2}{16\pi^2 n^2},  \ \ \ \Im\left(\lambda_{2,n}\right)\sim \left(2 n+\frac{3}{2}\right) \pi},
\end{array}
\right.
\end{equation*}
then  \eqref{EQ--(4.46)} holds with   $\alpha_1=\frac{1}{2}$ and $\alpha_2=2$. Therefore, $\ell_{k}=2=\max(\alpha_1,\alpha_2).$ Then, applying Theorem \ref{Theorem-4.9}, we get that the energy decay rate in \eqref{E--(4.1)} is optimal. \\[0.1in]
%%%%%%%%%%%%%%%%%%%%%%Case%%%%%%%%%%%%%%%%%%%%%%%%
\textbf{Case 2.} If there exists  $\kappa_1\in\mathbb{N}$ such that  $c=4\kappa_1 \pi$, we have 
%%%%%%%%%%%%%%%%%%%%Equation%%%%%%%%%%%%%%%%%%%%%%
\begin{equation*}
\left\{
\begin{array}{ll}
\displaystyle{\Re\left(\lambda_{1,n}\right)\sim  -\frac{c^2}{16\pi^2 n^2},  \ \ \ \Im\left(\lambda_{1,n}\right)\sim 2 n\pi},\nline
\displaystyle{\Re\left(\lambda_{2,n}\right)\sim  -\frac{ c^2}{16\pi^2 n^2},  \ \ \ \Im\left(\lambda_{2,n}\right)\sim \left(2 n+1\right) \pi},
\end{array}
\right.
\end{equation*}
then  \eqref{EQ--(4.46)} holds with   $\alpha_1=2$ and $\alpha_2=2$. Therefore, $\ell_{k}=2=\max(\alpha_1,\alpha_2).$ Then, applying Theorem \ref{Theorem-4.9} , we get that the energy decay rate in \eqref{E--(4.1)} is optimal.
\end{enumerate}
\end{proof}
%%%%%%%%%%%%%%%%%%%%%%%%%%%%%%%%%%%%%%%%%%%%%%%%%%
%%%%%%%%%%%%%%%%%%%%%%%%%%%%%%%%%%%%%%%%%%%%%%%%%%
%%%%%%%%%%%%%%%%%%%%%%%%%%%%%%%%%%%%%%%%%%%%%%%%%%
 %  References %
%%%%%%%%%%%%%%%%%%%%%%%%%%%%%%%%%%%%%%%%%%%%%%%%%%
%%%%%%%%%%%%%%%%%%%%%%%%%%%%%%%%%%%%%%%%%%%%%%%%%%	
%%%%%%%%%%%%%%%%%%%%%%%%%%%%%%%%%%%%%%%%%%%%%%%%%%
%\protect\bibliographystyle{abbrv}
%\protect\bibliographystyle{alpha}
%\bibliography{References}

\end{document}